\newcommand{\homtriv}[2]{\CH^{#2}(#1)_{{\mathrm{hom}}=0}}
\newcommand{\cc}{\mathbb{C}}
\newcommand{\pp}{\mathbb{P}}
\newcommand{\zz}{\mathbb{Z}}
\newcommand{\ff}{\mathbb{F}}
\newcommand{\OO}{\mathcal{O}}
\newcommand{\F}{\mathcal{F}}
\newcommand{\symgp}{\mathcal{S}}
\newcommand{\FF}{\mathbb{F}} 
\newcommand{\Z}{{\mathbf Z}}
\newcommand{\Zhat}{\hat{\mathbb{Z}}}
\newcommand{\ord}{\text{ord}}
\newcommand{\GSp}{\operatorname{GSp}}
\newcommand{\Sp}{\text{Sp}}
\newcommand{\Pic}{\operatorname{Pic}}
\newcommand{\Jac}{\operatorname{Jac}}
\newcommand{\MD}{\operatorname{MD}}
\newcommand{\ra}{\rightarrow}
\newcommand{\sgn}{\operatorname{sgn}}
\newcommand{\Frob}{\operatorname{Frob}}
\newcommand{\Aut}{\operatorname{Aut}}
\newcommand{\Out}{\operatorname{Out}}
\newcommand{\CH}{\operatorname{CH}}
\newcommand{\Sh}{\operatorname{Sh}}
\newcommand{\Cer}{\operatorname{Cer}}
\title{Certifying nontriviality of Ceresa classes of curves}
\author[Ellenberg]{Jordan Ellenberg}
\address{Current: J.~Ellenberg, 325 Van Vleck Hall, Department of Mathematics, University of Wisconsin, 480 Lincoln Drive, Madison, WI 53706, USA}
\email{ellenber@math.wisc.edu}
\author[Logan]{Adam Logan}
\address{Current: A.~Logan, Department of Pure Mathematics, 200 University Avenue West, Waterloo, ON N2L 3G1, Canada}
\email{adam.m.logan@gmail.com}
\author[Srinivasan]{Padmavathi Srinivasan}
\address{Current: P.~Srinivasan, Boston University, 665 Commonwealth Avenue, Boston, MA 02215, USA}
\email{padmask@bu.edu}
\subjclass[2010]{Primary .}
\date{\today}
\definecolor{blueshade}{RGB}{0, 0, 204}
\definecolor{forestgreen}{RGB}{0, 119, 85}
\definecolor{purple(x11)}{rgb}{0.63, 0.36, 0.94}
\definecolor{chocolate(web)}{rgb}{0.82, 0.41, 0.12}
\newtheorem{thm}{Theorem}[section]
\newtheorem{lem}[thm]{Lemma}
\newtheorem{prop}[thm]{Proposition}
\newtheorem{cor}[thm]{Corollary}
\newtheorem{question}[thm]{Question}
\newtheorem{algo}[thm]{Algorithm}
\newcommand{\defi}[1]{{\color{purple(x11)}{{\textsf{#1}}}}} 
\theoremstyle{definition}
\newtheorem{defin}[thm]{Definition}
\newtheorem{example}[thm]{Example}
\newtheorem{notation}[thm]{Notation}
\theoremstyle{remark}
\newtheorem{rem}{Remark}
\DeclareMathOperator{\lcm}{lcm}
\DeclareMathOperator{\Stab}{Stab}
\DeclareMathOperator{\Griff}{Griff}
\DeclareMathOperator{\chara}{char}
\begin{document}

\maketitle

\date{\today}

\begin{abstract}
The Ceresa cycle is a canonical algebraic $1$-cycle on the Jacobian of an algebraic curve. We construct an algorithm which, given a curve over a number field, often provides a certificate that the Ceresa cycle is non-torsion, without relying on the presence of any additional symmetries of the curve.  Under the hypothesis that the Sato--Tate group is
the whole of $\GSp$, we prove that if the Ceresa class (the image of the Ceresa cycle in \textup{\'{e}t}ale cohomology) is non-torsion, then the algorithm will eventually terminate with a certificate attesting to this fact.  
\end{abstract}

%\jordan{}\adam{}\padma{}\akshay{}
\section{Introduction}

The Ceresa cycle is an algebraic cycle canonically associated to an algebraic curve.  Its name was attached when Ceresa proved in \cite{ceresa} that the Ceresa cycle of a generic genus $g$ curve is not algebraically equivalent to $0$ when $g \geq 3$.  This bound is sharp: it turns out that the Ceresa cycle of a hyperelliptic curve is always trivial, so Ceresa's theorem cannot hold in genus $1$ or $2$.

The Ceresa cycle (along with its close cousin, the Gross--Schoen cycle) has proven to be of arithmetic and geometric importance in quite a few domains.  For instance, when $C$ is a curve over a number field $K$, the torsionness of the cycle is related to the Galois action on the fundamental group~\cite{hainmatsumoto}; and when $C$ is a modular curve, the height of the Ceresa cycle in the Chow group is related to triple product $L$-functions~\cite{YZZ,ZhangGS}. Computations with the Ceresa cycle of the Fermat quartic lead to the formulation of the Beilinson--Bloch conjectures about algebraic cycles, a higher dimensional analogue of the Birch and Swinnerton--Dyer conjecture. Provably nontrivial algebraic cycles that are homologically trivial are hard to write down, and the Ceresa cycle remains one of the naturally occurring sources of nontrivial algebraic cycles that most frequently appear in numerical evidence gathered to support the Beilinson--Bloch conjectures \cite{BST}.    

Explicit computations concerning the Ceresa cycle have been rare, for all its theoretical importance.  The Ceresa cycle is a $1$-cycle in the $3$-fold $C \times C \times C$, and as with any problem in Chow groups of codimension higher than $1$, the relations of algebraic and rational equivalence are not easy to understand concretely.  There are examples of special classes of curves exploiting the presence of symmetries on the curve such as additional correspondences on the Jacobian. Notably,  Fermat curves and some low-degree Fermat quotients have been shown to have non-torsion Ceresa cycle ~\cite{Bloch1, BHarris, EskandariMurty, Kimura, Tadokoro, Otsubo} and other more recent results of Qiu and Zhang give examples of non-hyperelliptic curves whose Ceresa cycle is torsion~\cite{qiuzhang}. In \cite{BST}, Buhler, Schoen and Top gather evidence for the Beilinson--Bloch conjectures by studying the Ceresa cycles of genus $3$ curves embeddded in a triple product of elliptic curves. Recent work of Laga and Shnidman \cite{laga-shnidmanbiellipticpicard} completely settles the case of bielliptic Picard curves (genus $3$ curves of the form $y^3 = x^4 + ax^2 + b$.)  In this family, the Ceresa cycle is always torsion mod algebraic equivalence, and it is torsion mod rational equivalence if and only if an explicitly computable point on a related elliptic curve is torsion. A recent preprint by Kerr, Li, Qiu and Yang~\cite{KLQY} shows that many modular curves have Ceresa cycle which is non-torsion modulo rational equivalence. Recently announced work by Gao and Zhang~\cite{GaoZhangNorthcott} (see also \cite{Hain, KerrTayou}) demonstrates that the height of the Ceresa cycle obeys a Northcott property away from a closed locus $F_g$ in the moduli space of curves of genus $g$. They show that $F_g$ is a proper subset for $g>2$, and hence, for all $d \ge 1$ and $g \ge 3$, the set of curves of genus $g$ whose moduli point is not in $F_g$ and whose Ceresa cycle is torsion up to rational equivalence has only finitely many points defined over the union of all number fields of degree $d$.

The main content of the present work is to construct an algorithm which, given a curve over a number field, often provides a certificate that the Ceresa cycle is non-torsion, without relying on additional symmetries of the curve.  Our algorithm, in the version presented here, cannot provide a proof that the Ceresa cycle is torsion, but only very compelling evidence for it. However, under the hypothesis that the Sato-Tate group is
the whole of $\GSp_{2g}$, we prove in Theorem \ref{T:shadowcheb} that if the Ceresa class (the image of the Ceresa cycle in \textup{\'{e}t}ale cohomology) is non-torsion, then the algorithm will {\em eventually} terminate with a certificate attesting to this fact.  With some work, Theorem~\ref{T:shadowcheb} could likely be made effective, yielding an algorithm which in finite (but possibly infeasible) time will certify torsionness of the Ceresa class.

Our central tool is an idea that goes back to \cite{BST} and which is also key to the relation between Ceresa cycles and modular curves and Heegner points; we can intersect the Ceresa cycle with a correspondence in $C \times C$ and obtain a point on the Jacobian of $C$, whose nonvanishing is an obstruction to the vanishing of the Ceresa cycle.  But this is only useful if $C$ admits an interesting correspondence.  Given a curve $C$ over a number field, our approach is to reduce $C$ modulo many primes $p$; in each such reduction, the graph of Frobenius at $p$ provides a correspondence we can pair with the Ceresa cycle, and the resulting point on $\Jac(C)(\mathbb{F}_p)$ is one we can compute efficiently.  Then vanishing or torsionness of the Ceresa cycle of $C$ will be reflected in the properties of this sequence of points; in particular, we show that if the order of this point in $\Jac(C)(\mathbb{F}_p)$ is large enough, the Ceresa cycle of $C$ must be non-torsion.

As an example of the use of our algorithm in practice, we apply it to a census of
representatives of the $255,564$ signed permutation orbits of smooth plane quartics whose coefficients belong to $\{-1,0,1\}$.  We show that all but $147$ of these have non-torsion Ceresa cycle.  For $142$ of these, we are able to determine by other means whether the Ceresa cycle is torsion or not, leaving $5$ exceptions whose Ceresa cycle we have so far failed to classify.  Of the $9$ curves we show to have torsion Ceresa cycle, all have nontrivial geometric automorphism group. 

%The implementation is
%available at the following repository:
%\begin{center}
%https://github.com/padmask/CeresaCertificates
%\end{center}

We make some scripts in %Sage and 
Magma \cite{magma} available at \cite{code}
so that the computational statements of the paper can be verified.
The interested reader should consult the \path{README.md} file for detailed instructions on their use.

\section*{Acknowledgments}
This project was started at the Park City Mathematics Institute in 2022 and is based on ideas that were discussed there between the three of us and Akshay Venkatesh.  In particular, insights of his were crucial to the proof of Theorem~\ref{T:shadowcheb}. We would like to thank the organizers of the PCMI program ``Number theory informed by computation" for bringing us together and Henri Darmon, Asvin G, Richard Hain, Jef Laga, Wanlin Li, Bjorn Poonen, Congling Qiu, Ari Shnidman, and Wei Zhang for helpful conversations.  Logan and Srinivasan were supported by Simons Foundation grant 546235 for the collaboration ``Arithmetic Geometry, Number Theory, and Computation''; Srinivasan was also supported by NSF DMS 2401547. 
Logan enjoyed the hospitality of ICERM and the Department of Pure Mathematics at the University of Waterloo for part of the time in which this work was done.  He would also like to thank the Tutte Institute for Mathematics and Computing for its support for and encouragement of his research.  Ellenberg was supported in part by NSF Grant DMS-2301386.

\section*{Notation}
A \defi{nice} variety over a field $k$ is a variety that is smooth, projective, and geometrically integral. We shall assume that $k$ is perfect throughout this paper. The \defi{Chow group of codimension $r$ cycles} (defined over $k$) on a nice variety $Y$ is denoted $\CH^r(Y)$. Following Gross and Schoen \cite[\S~1]{GrossSchoen}, the subgroup of \defi{homologically trivial cycles}  $\homtriv{Y}{r}$ consists of cycles that are  trivial in $\ell$-adic cohomology for all $\ell$. 
The \defi{Griffiths group $\Griff^r(Y)$ of codimension $r$ cycles} defined over $k$  is defined to be the group of
homologically trivial cycles of codimension $r$ modulo those algebraically equivalent to $0$. 
Let $\deg \colon \CH^0(Y) \rightarrow \Z$ denote the \defi{degree} map on $0$-cycles. 
Note that when $Y$ is a nice curve, we have $\CH^1(Y) = \Pic(Y)$ and $\homtriv{Y}{1} = \Pic^0(Y)$,
while $\Griff^1(Y) = 0$.

\section{Preliminaries}

\begin{defin}\label{D:GScycle}\cite[P.1, Equation~0.1]{GrossSchoen}
Let $X$ be a nice curve over a field $k$. Let $b$ be a $k$-rational point of $X$. Define the following diagonal cycles in $\CH^2(X \times X \times X)$:
\begin{alignat*}{3}\Delta_{12,b}&= \{(x,x,b):x\in X\},\quad \Delta_{23,b}&&= \{(b,x,x):x\in X\}, \quad
\Delta_{31,b}& &= \{(x,b,x):x\in X\}, \\
\Delta_{1,b}&=  \{(x,b,b):x\in X\},\quad \ 
\Delta_{2,b}&&=   \{(b,x,b):x\in X\},\quad  \
\Delta_{3,b}& &= \{(b,b,x):x\in X\}.
\end{alignat*}
Let $\Delta_{123} = \{ (x,x,x)\ | \ x \in X\}$.
The \defi{$b$-pointed modified diagonal cycle $\Delta_b$} is defined by
\[ \Delta_b \colonequals \Delta_{123}-\Delta_{12,b}-\Delta_{23,b}-\Delta_{13,b}+\Delta_{1,b}+\Delta_{2,b}+\Delta_{3,b}. \]
More generally, for a $k$-rational degree $1$ divisor $b \colonequals \sum a_i b_i$, define the \defi{$b$-pointed modified diagonal cycle $\Delta_b$} by $\Delta_b \colonequals \sum a_i \Delta_{b_i}$.
\end{defin}

\begin{defin}\label{D:PointedShadow}
Let $Z \in \CH^1(X \times X)$ denote a correspondence on a curve $X$. 
%\padma{Check if nice hypothesis is used anywhere, or if we need any other hypothesis.}  
Let $\widetilde{Z} \colonequals Z \times X$. Let $b$ be a $k$-rational degree $1$ divisor. Define the \defi{$b$-pointed $Z$-shadow $\Sh(Z,b)$} to be
\[ \Sh(Z,b) \colonequals (\pi_3)_*(\Delta_b \cdot \widetilde{Z}) \in \CH^0(X),\]
where $\pi_3 \colon X \times X \times X \rightarrow X$ is the natural projection map onto the $3$rd coordinate.
\end{defin}

\begin{rem}\label{R:Shdeg0}
We note that in fact, $\Sh(Z,b)$ lies in the subgroup $\Pic^0(X)$ of $\Pic(X) = \CH^0(X)$ since $\Delta_b$ is homologically trivial. Furthermore, if $Z$ and $b$ are $k$-rational, so is $\Sh(Z,b)$.
\end{rem}

We think of the shadow as a witness to the nontriviality of $\Delta_b$:
\begin{lem}\label{L:Key}
Let $m$ be any integer. If $m\Sh(Z,b)$ is nonzero in $\CH^0(X)=\Pic(X)$ for some correspondence $Z$, then $m\Delta_b$ is nonzero in $\CH^2(X \times X \times X).$ %\padma{Added $m$ at the start to make it citable.}
\end{lem}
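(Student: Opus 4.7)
The plan is to prove the contrapositive: assume $m\Delta_b = 0$ in $\CH^2(X \times X \times X)$, and deduce that $m\Sh(Z,b) = 0$ in $\Pic(X)$ for every correspondence $Z$. Since everything at play is a formal consequence of functoriality of Chow groups, the argument should be almost immediate once the dimensions line up.

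First I would check that the two operations used to build the shadow, namely
\[ \alpha \ \longmapsto\ \alpha \cdot \widetilde{Z} \quad \text{and} \quad \beta \ \longmapsto\ (\pi_3)_*\beta, \]
are well-defined on rational-equivalence classes. The first is the intersection with the codimension-$1$ cycle $\widetilde Z = Z \times X$; since rational equivalence is preserved under intersection with a fixed Chow class (this is the basic content of intersection theory, cf.\ Fulton), it descends to a homomorphism $\CH^2(X^3) \to \CH^3(X^3)$. The second is the proper pushforward along $\pi_3 \colon X \times X \times X \to X$, which is defined on Chow groups and lands in $\CH_0(X) = \Pic(X)$ (noting that the cycle $\Delta_b \cdot \widetilde Z$ has dimension $0$). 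Both operations are additive in their arguments.

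Putting these facts together, linearity gives
\[ m \cdot \Sh(Z,b) \;=\; m \cdot (\pi_3)_*\bigl(\Delta_b \cdot \widetilde Z\bigr) \;=\; (\pi_3)_*\bigl( (m\Delta_b) \cdot \widetilde Z\bigr). \]
By hypothesis $m\Delta_b = 0$ in $\CH^2(X \times X \times X)$, so the intersection $(m\Delta_b)\cdot \widetilde Z$ vanishes in $\CH^3(X \times X \times X)$, and its image under $(\pi_3)_*$ vanishes in $\Pic(X)$. This gives $m \Sh(Z,b) = 0$, which is the contrapositive of the claim.

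There is no real obstacle here; the content of the lemma is simply that the shadow construction, viewed as a map $\CH^2(X^3) \to \Pic(X)$ given by $\alpha \mapsto (\pi_3)_*(\alpha \cdot \widetilde Z)$, is a group homomorphism. The only thing to be mildly careful about is book-keeping with codimension conventions when invoking the fact that intersection and proper pushforward respect rational equivalence, but nothing deeper is required. The real work — choosing good correspondences $Z$ so that the shadow actually detects non-triviality — will occur in later sections; here we only need the formal implication.
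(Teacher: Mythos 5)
Your proof is correct and is essentially the paper's own argument, which simply observes that the operations producing $\Sh(Z,b)$ from $\Delta_b$ (intersection with the fixed class $\widetilde Z$ followed by proper pushforward along $\pi_3$) are group homomorphisms on Chow groups. Your version just spells out the contrapositive and the functoriality facts explicitly; no difference in substance.
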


\begin{proof}  Immediate from the fact that the operations used to obtain $\Sh(Z,b)$ from $\Delta_b$ are all homomorphisms.
\end{proof}

The value of the shadow is that, in most cases, it is quite practical to compute with in a way that the modified diagonal cycle itself is not. We now derive a computable formula for the $Z$-shadow in terms of the fixed points of the correspondence $Z$.

\begin{defin}\label{D:NiceCorr}
We say a cycle $Z \in \CH^1(X \times X)$ is a \defi{nice} correspondence if none of the irreducible components in the support of $Z$ are vertical or horizontal fibers.  
\end{defin}

We denote by $\psi_i: Z \ra X$ the restriction of the projection $\pi_i$ to $Z \subset X \times X$. If $Z$ is a nice correspondence, then the $\psi_i$ are finite morphisms, and we denote by $d_Z$ and $e_Z$ the degrees $\deg \psi_1$ and $\deg \psi_2$, and by $f_Z$ and $g_Z$ the maps $(\psi_2)_* \psi_1^*$ and $(\psi_1)_* \psi_2^*$ from $\Pic(X)$ to $\Pic(X)$.  Note that $f_Z$ multiplies degrees of divisors in $\Pic(X)$ by $d_Z$ and $g_Z$ multiplies degrees by $e_Z$.  

\begin{lem}\label{L:ComputeSh}
Let $X$ be a nice curve over a field $k$. Let $\Delta \in \CH^1(X \times X)$ denote the diagonal cycle. Let $Z \in \CH^1(X \times X)$ denote a nice correspondence. Let $b \in \Pic(X)$ be a $k$-rational degree $1$ divisor. Then
\[\Sh(Z,b) = (\pi_1)_*(Z \cdot \Delta)  - f_Z(b) - g_Z(b) +[d_Z+e_Z - \deg(Z \cdot \Delta)]b. \]
\end{lem}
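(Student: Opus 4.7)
The plan is to expand $\Delta_b$ into its seven constituent cycles using Definition~\ref{D:GScycle}, compute $(\pi_3)_*(\cdot\,\widetilde Z)$ on each separately, and sum with the prescribed signs. By linearity it suffices to handle the case where $b$ is a single $k$-rational point; the general case of a degree-one divisor $b=\sum a_i b_i$ then follows from linearity of $f_Z,g_Z$ together with $\sum a_i = 1$.

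Six of the seven terms are direct geometric computations, for which I may replace $Z$ within its rational equivalence class so that all relevant intersections are proper. The intersection $\Delta_{123}\cap \widetilde Z = \{(x,x,x):(x,x)\in Z\}$ is carried isomorphically by $\pi_3$ to the divisor $(\pi_1)_*(Z\cdot \Delta)$ on $X$. The intersection with $\Delta_{12,b}$ has the same underlying points but $\pi_3$ collapses them to $b$, producing $\deg(Z\cdot\Delta)\cdot b$. The intersection with $\Delta_{23,b}$ is $\{(b,x,x):(b,x)\in Z\}$, which $\pi_3$ identifies with the divisor $\{x:(b,x)\in Z\}=(\psi_2)_*\psi_1^* b = f_Z(b)$; the symmetric computation for $\Delta_{13,b}$ yields $g_Z(b)$. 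Finally, $\pi_3$ collapses the intersections with $\Delta_{1,b}$ and $\Delta_{2,b}$ to the point $b$, with multiplicities $\deg(\psi_2^* b)=e_Z$ and $\deg(\psi_1^* b)=d_Z$ respectively.

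The one subtle term is $\Delta_{3,b}$, on which $\pi_3$ is an isomorphism onto $X$ but the intersection with $\widetilde Z$ is nonproper precisely when $(b,b)\in Z$. I would handle this via the Gysin pullback: let $\iota:\Delta_{3,b}\hookrightarrow X^3$ be the inclusion, and note $\widetilde Z = p_{12}^* Z$ where $p_{12}:X^3\to X^2$ is projection onto the first two factors. Since $p_{12}\circ\iota$ is the constant map $\Delta_{3,b}\to X^2$ with value $(b,b)$, the refined pullback $\iota^*\widetilde Z = (p_{12}\circ\iota)^* Z$ vanishes in $\Pic(\Delta_{3,b})$ (pullback along a map factoring through a point kills any positive-codimension class). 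The projection formula then yields $(\pi_3)_*(\Delta_{3,b}\cdot\widetilde Z)=(\pi_3\circ\iota)_*(\iota^*\widetilde Z)=0$, so this term contributes nothing. Assembling the seven contributions with the signs in Definition~\ref{D:GScycle} recovers the stated formula, with the terms $e_Z b$, $d_Z b$, and $-\deg(Z\cdot\Delta)\cdot b$ combining into the coefficient $d_Z+e_Z-\deg(Z\cdot\Delta)$ of $b$. The main technical point is the vanishing of the $\Delta_{3,b}$ contribution above; the niceness hypothesis on $Z$ is used throughout to ensure $\psi_1,\psi_2$ are finite and the relevant intersections can be made proper.
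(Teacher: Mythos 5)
Your proof is correct and follows essentially the same route as the paper's: expand $\Delta_b$ into its seven constituent cycles, intersect each with $\widetilde Z$, and push forward by $\pi_3$, obtaining exactly the same seven contributions. Your only addition is the careful justification (via the refined pullback along the constant map $p_{12}\circ\iota$) that the $\Delta_{3,b}$ term vanishes even when $(b,b)\in Z$, a point the paper simply asserts.
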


(Replacing $\pi_1$ by $\pi_2$ in this expression would not change the definition, since $Z \cdot \Delta$ is supported on the diagonal.)

\begin{proof}
This is a direct computation using the definition of $\Delta_b$ in Definition~\ref{D:GScycle}. To this end, let $\iota \colon \Delta \rightarrow X \times X \times X$ be the map defined by $\iota(x,x) \colonequals (x,x,x)$, and let $\iota_{12,b} \colon X \times X \rightarrow X \times X \times X$ be the map defined by $\iota_{12,b}(x_1,x_2) = (x_1,x_2,b)$. Define $\iota_{23,b} \colon X \times X \rightarrow X$ by $\iota_{23}(x_1,x_2) = (x_1,x_2,x_2)$ and $\iota_{13} \colon X \times X \rightarrow X \times X \times X$ by $\iota_{13}(x_1,x_2) = (x_1,x_2,x_1)$.

Then $\Delta_{123} \cdot \widetilde{Z} = \iota_*(Z \cdot \Delta),\Delta_{12,b} \cdot \widetilde{Z} = {\iota_{12,b}}_*(Z \cdot \Delta), \Delta_{23,b} \cdot \widetilde{Z} = {\iota_{23}}_*(\psi_1^*(b)), \Delta_{13,b} \cdot \widetilde{Z} = {\iota_{13}}_*(\psi_2^*(b))$, and $\Delta_{1,b} \cdot \widetilde{Z} = {\iota_{12,b}}_*(\psi_2^*(b)), \Delta_{2,b} \cdot \widetilde{Z} = {\iota_{12,b}}_*(\psi_1^*(b)), \Delta_{3,b} \cdot \widetilde{Z} = 0$. Since we also have $(\pi_3 \cdot i)|_\Delta = \pi_1 = \pi_2, (\pi_3 \cdot i_{23})|_Z = 
\psi_2, (\pi_3 \cdot i_{13})|_Z = \psi_1$ and $ \pi_3 \cdot i_{12,b}$ is the constant map sending all points to $b$, we get
\begin{align*}
(\pi_3)_*(\Delta_{123} \cdot \widetilde{Z}) &= (\pi_3 \cdot \iota)_*(Z \cdot \Delta) = (\pi_1)_*(Z \cdot \Delta) = (\pi_2)_*(Z \cdot \Delta) \\
(\pi_3)_*(\Delta_{12,b} \cdot \widetilde{Z}) &=  (\pi_3 \cdot \iota_{12,b})_*(Z \cdot \Delta) = \deg(Z \cdot \Delta) b  \\
(\pi_3)_*(\Delta_{23,b} \cdot \widetilde{Z}) &= (\pi_3 \cdot \iota_{23})_*(\psi_1^*(b)) = (\psi_2)_*(\psi_1^*(b)) = f_Z(b) \\
(\pi_3)_*(\Delta_{13,b} \cdot \widetilde{Z}) &= (\pi_3 \cdot \iota_{13})_*(\psi_2^*(b)) = (\psi_1)_*(\psi_2^*(b)) = g_Z(b)  \\
(\pi_3)_*(\Delta_{1,b} \cdot \widetilde{Z}) &= (\pi_3 \cdot \iota_{12,b})_*( \psi_2^*(b) ) = \deg(\psi_2) b = e_Z b \\
(\pi_3)_*(\Delta_{2,b} \cdot \widetilde{Z}) &= (\pi_3 \cdot \iota_{12,b})_*( \psi_1^*(b) ) = \deg(\psi_1) b = d_Z b  \\
(\pi_3)_*(\Delta_{3,b} \cdot \widetilde{Z}) &= (\pi_3)_*(0) = 0. \qedhere
\end{align*}
\end{proof}

\begin{rem}
In particular, if $Z = Z_f$ is the graph of a dominant self-morphism $f: X \ra X$, Lemma~\ref{L:ComputeSh} reduces to
\[
\Sh(Z_f,b) = (\pi_1)_* (Z_f \cdot \Delta) - f(b) - f^{-1}(b) + (\deg f + 1 - \deg(Z_f \cdot \Delta))b.
\]
In the still more particular case that $k = \ff_q$ and $f:X \ra X$ is the geometric Frobenius morphism, we have that $f(b) = b$ and $f^{-1}(b) = qb$ when $b$ is $\mathbb{F}_q$ rational, and the intersection of the graph of Frobenius with the diagonal is the divisor on $X$ obtained as the sum of all $\ff_q$-rational points.  So we get 
\begin{equation}\label{E:ShFr}
\Sh(Z_f,b) = X(\ff_q) - |X(\ff_q)|b,
\end{equation}
where by $X(\ff_q)$ we mean the divisor of degree $|X(\ff_q)|$ obtained by summing the $\ff_q$-rational points of $X$.
\end{rem}

\begin{defin} The \defi{$b$-pointed Frobenius shadow} on $X$ is the divisor class of $X(\ff_q) - |X(\ff_q)|b$.
\end{defin}

Another important special case is that where $f: X \ra X$ is the identity. In this case, the self-intersection of the diagonal becomes a copy of the anticanonical divisor on the diagonal, and we get
\begin{equation}\label{E:ShId}
    \Sh(Z_f,b) = (2g-2) b - K_X.    
\end{equation}

\begin{rem}\label{R:canbp} The fact that $\Delta_b$ is nontrivial if $(2g-2)b - K_X$ is nontrivial in $\Pic^0(X)$ is already observed, with the same proof, in \cite[Proposition 2.3.2.]{qiuzhang}. 
\end{rem}

The question of whether the modified diagonal cycle is torsion, which a priori depends on the choice of $b$, is somewhat rigidified by the above discussion; there exists a choice of $b$ making $\Delta_b$ torsion if and only if $\Delta_b$ is torsion for some (whence every) degree-$1$ divisor $b$ satisfying $(2g-2)b = K_X$.  We call such a $b$ a \defi{canonical base point}.  Of course, there may not be a $k$-rational canonical base point; so when we refer to a canonical base point, we silently extend the base field $k$ to an extension over which such a degree $1$ divisor $b$ is defined.

The following proposition provides a computationally tractable lower bound for the order of $\Delta_b$ for a curve $X$ over a finite field.

\begin{prop}\label{P:CanBP}\label{pr:frobshadownonzero}
Let $k = \ff_q$ and let $X$ be a nice curve over $k$. Let $b$ be a $k$-rational degree $1$ divisor. Then the order of $(2g-2)X(\ff_q) - |X(\ff_q)|K_X$ in $\Pic^0(X)$ divides the order of the $b$-pointed modified diagonal cycle of $X$. In particular, if $(2g-2)X(\ff_q) - |X(\ff_q)|K_X$ is nonzero in $\Pic^0(X)$, then the modified diagonal cycle $\Delta_b$ is nonzero for all $k$-rational degree-$1$ divisors $b$ on $X$. 
\end{prop}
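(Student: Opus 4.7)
The plan is to combine the two shadow computations given by equations \eqref{E:ShFr} and \eqref{E:ShId} in a way that eliminates the dependence on the base point $b$, then invoke Lemma~\ref{L:Key} applied to each correspondence separately.

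First I would take the correspondence $Z_F$ given by the graph of geometric Frobenius and the correspondence $Z_{\mathrm{id}}$ given by the diagonal (graph of the identity). By \eqref{E:ShFr} the $b$-pointed Frobenius shadow equals $X(\ff_q) - |X(\ff_q)|\,b$, and by \eqref{E:ShId} the $b$-pointed identity shadow equals $(2g-2)\,b - K_X$. Taking the linear combination
\[
(2g-2)\,\Sh(Z_F,b) \;+\; |X(\ff_q)|\,\Sh(Z_{\mathrm{id}},b) \;=\; (2g-2)\,X(\ff_q) \;-\; |X(\ff_q)|\,K_X,
\]
the $b$'s cancel, producing a divisor class in $\Pic^0(X)$ that depends only on $X$ and $q$, not on the choice of $b$.

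Now let $m$ denote the order of $\Delta_b$ in $\CH^2(X\times X\times X)$. By the contrapositive of Lemma~\ref{L:Key}, $m\,\Delta_b=0$ forces $m\,\Sh(Z,b)=0$ for every correspondence $Z$. Applying this to $Z=Z_F$ and $Z=Z_{\mathrm{id}}$ and then to the linear combination above yields
\[
m\bigl[(2g-2)\,X(\ff_q) - |X(\ff_q)|\,K_X\bigr] \;=\; 0 \quad\text{in }\Pic^0(X),
\]
which is exactly the statement that the order of $(2g-2)\,X(\ff_q) - |X(\ff_q)|\,K_X$ divides $m$. The final sentence of the proposition is then immediate: if this class is nonzero then $m \ne 0$, i.e.\ $\Delta_b$ is non-torsion and in particular nonzero.

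There is essentially no technical obstacle here; the only conceptual step is noticing that one can cancel the $b$-dependence by combining the Frobenius shadow and the identity shadow with the correct integer coefficients. Everything else is the formal bookkeeping packaged into Lemma~\ref{L:Key} and the two shadow formulas \eqref{E:ShFr}, \eqref{E:ShId}, both of which are available to us.
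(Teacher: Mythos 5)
Your proof is correct and is essentially the paper's own argument: the paper simply packages your linear combination into the single correspondence $Z' = (2g-2)Z_{\Frob_q} - |X(\ff_q)|\,\Delta$ and observes that the resulting shadow of $\Delta_b$ equals $(2g-2)X(\ff_q) - |X(\ff_q)|K_X$ independently of $b$, which is the same computation by additivity of the shadow in $Z$. One small slip in your final sentence: nonvanishing of the class only gives that the order of $\Delta_b$ is divisible by the (finite, $>1$) order of the class in the finite group $\Pic^0(X)$, hence that $\Delta_b \neq 0$ --- it does not show $\Delta_b$ is non-torsion --- but nonvanishing is all the proposition asserts here.
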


\begin{proof}
Let $Z' \colonequals (2g-2)Z_{\Frob_q} - |X(\ff_q)| \Delta \in \CH^1(X \times X)$.  From the computations of the shadow of Frobenius \eqref{E:ShFr} and the shadow of the identity \eqref{E:ShId} above, one can check that the image of the modified diagonal cycle $\Delta_b$ under the group homomorphism from $\CH^2(X \times X \times X)$ to $\CH^0(X)$ that takes a cycle $W$ to $(\pi_3)_*(W \cdot (Z' \times X))$ is independent of $b$, and indeed is always equal to $(2g-2)X(\ff_q) - |X(\ff_q)|K_X$. The divisibility of orders follows. \qedhere
\end{proof}

\begin{defin}\label{D:FroSh}
Let $k = \ff_q$ and let $X$ be a nice curve over $k$. The {\defi{Frobenius shadow}} of $X$ is the point $(2g-2) X(\ff_q) - |X(\ff_q)| K_X$ in $\Pic^0(X)$. 
\end{defin}

\begin{rem} The use of ``shadows" of this kind to witness nontriviality of the modified diagonal cycle is not new; for instance, Bloch in essence uses the Frobenius shadow in \cite[\S~3]{Bloch1} in order to show the modified diagonal cycle of the Fermat quartic is nontrivial.  Darmon, Rotger, and Sols~\cite{DarmonRotgerSols} use the Hodge-theoretic version of the shadow (which takes image in $\Jac(X)(\mathbb{C})$) for any Hodge class in $X \times X$.  

Although most curves $X$ defined over number fields do not have any nontrivial Hodge classes in $X \times X$, they do acquire additional cycle classes upon reduction modulo $p$. The idea to use the cycle class of the Frobenius correspondence along with bounds on torsion on global Galois cohomology groups also appears in the work of Buhler, Schoen, and Top~\cite[Theorem~4.8]{BST} in the special case of certain genus $3$ curves embedded in a triple product of an elliptic curve. They in fact obtain the even stronger conclusion that the modified diagonal cycle is non-torsion modulo {\textit{algebraic}} equivalence in certain cases \cite[Lemma~2.2, Lemma~2.6, Lemma~4.8~(3)]{BST}, by showing that imposing a certain irreducibility hypothesis on the relevant Galois module allows the refined cycle class map to factor through the Griffiths group.

The novelty of the present paper is to use shadows corresponding to all the correspondences on $X$ over finite fields, and to combine these with bounds from weights in order to systematically prove non-torsionness of the modified diagonal cycle for a large population of curves. 
\end{rem}

\subsection{The $\ell$-adic Abel-Jacobi image of the modified diagonal cycle}

The algorithms we will present later in the paper rely on the fact that the shadows above can also be understood through the lens of \textup{\'{e}t}ale cohomology. Let $W$ be a smooth variety over a field $k$.  Let $\Zhat' \colonequals \prod_{\ell \neq \chara(k)} \zz_\ell$. Let $r$ be an integer $\geq 0$. By \cite[Corollary~1.2]{Bloch1} the space of null-homologous codimension $r$ cycles admits a cycle class map
\begin{equation}\label{E:Bloch}
\Sigma_{W}: \homtriv{W}{r} \ra H^1(G_k, H^{2r-1}_{\textup{\'{e}t}}(W_{\overline{k}}, \Zhat'(r))).
\end{equation}

Recall that $\Delta_b$ lies in $\homtriv{X \times X \times X}{2}$.
\begin{defin}\label{D:CerClass} The \defi{$b$-pointed modified diagonal class} $\nu(X,b)$ is $\Sigma_{X \times X \times X}(\Delta_b) \in H^1(G_k, H^3_{\textup{\'{e}t}}(X_{\overline{k}} \times X_{\overline{k}} \times X_{\overline{k}}, \Zhat'(2))) $. For $\ell \nmid \chara k$, the \defi{$\ell$-adic $b$-pointed modified diagonal class} $\nu_\ell(X,b)$ is the projection of $\nu(X,b)$ to $H^3_{\textup{\'{e}t}}(X_{\overline{k}} \times X_{\overline{k}} \times X_{\overline{k}}, \Z_\ell(2)))$. 

The \textup{\'{e}t}ale cohomology groups of $X^3$ have a K\"{u}nneth decomposition. In particular, 
we have
\[ H^3_{\textup{\'{e}t}}(X_{\overline{k}} \times X_{\overline{k}} \times X_{\overline{k}}, \Zhat') = \oplus_{\substack{0 \leq i,j,k \leq 2 \\ i+j+k=3}} H^i_{\textup{\'{e}t}}(X_{\overline{k}}, \Zhat') \otimes H^j_{\textup{\'{e}t}}(X_{\overline{k}}, \Zhat') \otimes H^k_{\textup{\'{e}t}}(X_{\overline{k}}, \Zhat'). \] (We have suppressed the pull-back by the projection maps $X^3 \rightarrow X$.) 

\begin{rem}\label{R:wheredoesMDlive}
It follows from \cite[Corollary 2.6]{GrossSchoen} that $\nu(X,b)$ in fact lies in the single K\"{u}nneth component $H^1(G_k, H^1_{\textup{\'{e}t}}(X_{\overline{k}}, \Zhat') \otimes H^1_{\textup{\'{e}t}}(X_{\overline{k}}, \Zhat') \otimes H^1_{\textup{\'{e}t}}(X_{\overline{k}}, \Zhat')(2))$.
\end{rem}

The procedure which sends a class in $\homtriv{X \times X \times X}{2}$ to its shadow can be carried out within these \textup{\'{e}t}ale cohomology groups---intersecting a homologically trivial cycle with the cycle $Z \times X$ for a correspondence $Z$ inside $X \times X$ translates into taking the cup product of the corresponding cycle classes. More precisely, we have

\begin{prop}\label{P:IntersectToCup}
The following diagram commutes, where the top right arrow is the shadow construction, and the bottom arrow is given by cup product in Galois cohomology.
\begin{equation*}\label{}
  \xymatrix{
    \homtriv{X \times X \times X}{2} \times  \CH^1(X \times X) \ar[d]_{(\Sigma_{X \times X \times X},\Sigma_{X \times X})} \ar[r]^-{(\pi_3)_* \left( (\cdot) \cap \left(  (\cdot) \times X  \right)\right)} & \homtriv{X}{1}=\Pic^0(X) \ar[d]^{\Sigma_X} \\
      H^1(G_k, H^3_{\textup{\'{e}t}}(X_{\overline{k}} \times X_{\overline{k}} \times X_{\overline{k}}, \Zhat'(2))) \times  H^0(G_k, H^2_{\textup{\'{e}t}}(X_{\overline{k}} \times X_{\overline{k}}, \Zhat'(1))) \ar[r]^-{\phi \circ \cup} & H^1(G_k, H^1_{\textup{\'{e}t}}(X_{\overline{k}}, \Zhat'(1))), \\
  }
    \end{equation*}
    where $\phi$ is defined (using the K\"{u}nneth decomposition) by
    \begin{align*}
H^3_{\textup{\'{e}t}}(X_{\overline{k}} \times X_{\overline{k}} \times X_{\overline{k}}, \Zhat'(2)) \times H^2_{\textup{\'{e}t}}(X_{\overline{k}} \times X_{\overline{k}}, \Zhat'(1)) &\rightarrow H^1_{\textup{\'{e}t}}(X_{\overline{k}}, \Zhat'(1)) \\
(h_1 \otimes h_2 \otimes h_3), (h_4 \otimes h_5) &\mapsto (h_1 \cup h_4) (h_2 \cup h_5) h_3.
    \end{align*}
\end{prop}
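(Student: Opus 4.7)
The plan is to reduce the commutativity of the diagram to three standard compatibilities of Bloch's refined cycle class map---with intersection, with flat pullback, and with proper pushforward---and then to unpack the resulting cohomological expression using the K\"unneth decomposition. Concretely, observing that $\widetilde{Z} = Z \times X = \pi_{12}^*(Z)$, we want to verify the identity
\[
\Sigma_X\bigl((\pi_3)_*(\Delta_b \cdot \pi_{12}^* Z)\bigr) \;=\; (\pi_3)_*\bigl(\Sigma_{X^3}(\Delta_b)\, \cup\, \pi_{12}^*\Sigma_{X\times X}(Z)\bigr),
\]
and then to identify the pushforward on the right hand side with the map $\phi$ built from the K\"unneth formula.

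For the first step I would invoke the following three standard properties of $\Sigma$. First, if $\alpha \in \homtriv{Y}{r}$ and $\beta \in \CH^s(Y)$, then $\alpha \cdot \beta$ is again homologically trivial and $\Sigma_Y(\alpha \cdot \beta) = \Sigma_Y(\alpha) \cup \mathrm{cl}(\beta)$, where $\mathrm{cl}(\beta) \in H^0(G_k,H^{2s}_{\textup{\'{e}t}}(Y_{\overline k},\Zhat'(s)))$ is the ordinary cycle class and the cup product is the pairing between $H^1$ and $H^0$ of $G_k$ with coefficients cup product. Second, for a smooth morphism $f$ one has $\mathrm{cl}(f^*\gamma) = f^*\mathrm{cl}(\gamma)$. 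Third, for a proper morphism $f$ of relative dimension $d$ and $\alpha \in \homtriv{Y}{r}$, one has $\Sigma_{Y'}(f_*\alpha) = f_*\Sigma_Y(\alpha)$ with the Gysin pushforward on the right. Applied with $f = \pi_3$, $\alpha = \Delta_b$ and $\beta = \pi_{12}^*Z$, these three yield the displayed identity above.

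The second step is to compute $\phi$ explicitly from $(\pi_3)_* \circ (-\cup \pi_{12}^*(-))$. Using Remark~\ref{R:wheredoesMDlive}, write $\Sigma_{X^3}(\Delta_b)$ as a sum of elementary tensors $h_1 \otimes h_2 \otimes h_3$ in the K\"unneth summand $H^1\otimes H^1\otimes H^1(2)$, and decompose $\Sigma_{X\times X}(Z)$ into its K\"unneth components in $H^i\otimes H^j(1)$ with $i+j = 2$. The pieces of $\Sigma(Z)$ of bidegree $(0,2)$ and $(2,0)$ pull back under $\pi_{12}^*$ to classes of tridegree $(0,2,0)$ and $(2,0,0)$, and cupping these with $(1,1,1)$ classes forces a factor of $H^3(X) = 0$; hence only the $H^1\otimes H^1$ bidegree component $\sum h_4\otimes h_5$ of $\Sigma(Z)$ can contribute. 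For that component, $\pi_{12}^*(h_4\otimes h_5) = h_4\otimes h_5 \otimes 1$, and the K\"unneth cup product yields $(h_1\cup h_4)\otimes (h_2\cup h_5)\otimes h_3$ (up to a sign coming from the Koszul rule, which is absorbed in the convention on Poincar\'e duality). Finally $(\pi_3)_*$ is induced by the trace map on the first two factors, so its effect on $\alpha\otimes\beta\otimes\gamma$ with $\alpha,\beta\in H^2(X,\Zhat'(1))$ is $(\int_X\!\alpha)(\int_X\!\beta)\gamma$, giving exactly $\phi\bigl(h_1\otimes h_2\otimes h_3,\, h_4\otimes h_5\bigr) = (h_1\cup h_4)(h_2\cup h_5) h_3$.

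The main technical obstacle I anticipate is justifying the compatibility of Bloch's refined class map with intersection and proper pushforward, since the target lives in $H^1(G_k,-)$ rather than in ordinary \'etale cohomology, and one must take a little care at the level of the Hochschild--Serre filtration on $H^{2r}_{\textup{\'{e}t}}(Y,\Zhat'(r))$ to see that everything descends correctly. Once this is in hand, however, these compatibilities follow from the naturality of the Hochschild--Serre spectral sequence together with the well-known geometric compatibilities of ordinary \'etale cycle classes. The K\"unneth bookkeeping in the second step is then essentially routine, modulo checking that the vanishing of higher cohomology of $X$ kills the parasitic K\"unneth summands as indicated.
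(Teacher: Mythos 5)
Your argument is correct, but it takes a more self-contained route than the paper, which disposes of this proposition in one line by citing \cite[Proposition~1.9~(6)]{BST}: that result gives the compatibility of Bloch's map $\Sigma$ with the action of an arbitrary correspondence $\Gamma \subset W \times W'$, and the paper simply specializes it to $W = X^3$, $W' = X$, $\Gamma = Z \times \Delta$, so that the induced operation on cycles is exactly $(\pi_3)_*\bigl((\cdot) \cap (Z \times X)\bigr)$ and the induced operation on cohomology is $\phi \circ \cup$. What you have done, in effect, is reprove that cited compatibility in the special case at hand by factoring the correspondence action into its three constituents --- intersection against $\mathrm{cl}(\pi_{12}^*Z)$, flat pullback, and proper (Gysin) pushforward --- each of which is itself one of the earlier parts of \cite[Proposition~1.9]{BST}, and then doing the K\"unneth bookkeeping explicitly. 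The trade-off is the expected one: the paper's citation is shorter and hides the Hochschild--Serre naturality issues you correctly flag inside the reference, while your version makes visible exactly where the formula for $\phi$ comes from (in particular why only the $H^1 \otimes H^1$ K\"unneth component of $Z$ contributes, which the paper only records afterwards in Remark~\ref{R:IgnoreH0H2}). Two small caveats: your step~2 treats only the $(1,1,1)$ K\"unneth component of the first argument, which suffices for $\Delta_b$ via Remark~\ref{R:wheredoesMDlive} but is slightly narrower than the diagram as literally stated for all of $\homtriv{X \times X \times X}{2}$; and since all five classes $h_1,\dots,h_5$ have odd degree, the Koszul rule genuinely produces a sign relative to the naive formula for $\phi$, which you wave off as a duality convention --- harmless for every torsion-order application in the paper, but worth pinning down if one wanted the identity on the nose.
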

\begin{proof}
This is \cite[Proposition~1.9~(6)]{BST}, taking $W = X \times X \times X$, $W' = X$, and $\Gamma = Z \times \Delta \in X \times X \times X \times X$.
\end{proof}

\begin{rem}\label{R:IgnoreH0H2} Since $\nu(X,b)$ lies in $H^1(G_k, H^1_{\textup{\'{e}t}}(X_{\overline{k}}, \Zhat') \otimes H^1_{\textup{\'{e}t}}(X_{\overline{k}}, \Zhat') \otimes H^1_{\textup{\'{e}t}}(X_{\overline{k}}, \Zhat')(2))$,  Proposition~\ref{P:IntersectToCup} implies that the $\ell$-adic cohomology class of $\Sh(Z,b)$ depends only on the projection of $Z$ to the K\"{u}nneth component $H^1_{\textup{\'{e}t}}(X_{\overline{k}}, \Zhat') \otimes H^1_{\textup{\'{e}t}} (X_{\overline{k}}, \Zhat')$.
\end{rem}

\begin{rem}\label{R:CycKummer}
Observe that Bloch's map $\Sigma_X \colon \Pic^0(X) \rightarrow H^1(G_k,H^1_{et}(X,\Zhat'(1)))$ defined using the Hochschild-Serre spectral sequence is the Kummer map. (See, e.g., the proof of \cite[Corollary~1.2]{Bloch1} for details.)
\end{rem}

Cycle class maps are also compatible with specialization modulo $p$. More precisely, we have
\begin{prop}\label{P:reduce}
Let $W$ be a smooth, projective geometrically integral variety over a number field $K$. Let $\mathfrak{p}$ be a prime of good reduction for $W$. Let $\ff_q$ denote the residue field of $\mathfrak{p}$, and let $\Zhat' = \prod_{\ell \neq \chara(\ff_q)} \zz_\ell$. Assume that $H^{2r-1}(W_{\overline{K_\mathfrak{p}}},\Zhat'(r))$ is torsion free, and let $\theta \colon H^{2r-1}(W_{\overline{\ff_q}},\Zhat'(r)) \rightarrow H^{2r-1}(W_{\overline{K_\mathfrak{p}}},\Zhat'(r))$ denote the base change isomorphism in \cite[VI.4.2]{Milne}.
Then the following diagram commutes
\begin{equation*}
\xymatrix{
\homtriv{W_K}{r} \ar[r] \ar[d]_{\Sigma} & \homtriv{W_{K_{\mathfrak{p}}}}{r} \ar[r]^{\mathrm{sp}} \ar[d]_{\Sigma} & \homtriv{W_{\ff_q}}{r} \ar[d]_{\Sigma} \\
H^1(G_K,H^{2r-1}_{et}(W_{\overline{K}},\Zhat'(r))) \ar[r] & H^1(G_{K_\mathfrak{p}},H^{2r-1}_{et}(W_{\overline{K_{\mathfrak{p}}}},\Zhat'(r))) \ar[r]^{\theta^{-1}}
 & H^1(G_{\ff_q},H^{2r-1}_{et}(W_{\overline{\ff_q}},\Zhat'(r))) }
\end{equation*}
\end{prop}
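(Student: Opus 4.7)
The plan is to decompose the rectangle into two squares and treat each independently. Recall that Bloch's map $\Sigma$ is obtained from the low-degree edge of the Hochschild-Serre spectral sequence
$$E_2^{p,q} = H^p(G_k, H^q_{\textup{\'{e}t}}(W_{\overline{k}}, \Zhat'(r))) \Longrightarrow H^{p+q}_{\textup{\'{e}t}}(W_k, \Zhat'(r)),$$
which lifts a class in $H^{2r}_{\textup{\'{e}t}}(W_k, \Zhat'(r))$ that vanishes in $E_2^{0,2r}$ to a class in $E_2^{1,2r-1}$. Since the statement is compatible with decomposing $\Zhat'$ into its $\Z_\ell$-factors, it suffices to work with a fixed $\ell \neq \chara(\ff_q)$.

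The left square is pure functoriality. The top arrow is pullback of cycles along $\mathrm{Spec}\,K_\mathfrak{p} \to \mathrm{Spec}\,K$, which preserves homological triviality; the bottom arrow is the natural restriction in Galois cohomology coming from $G_{K_\mathfrak{p}} \hookrightarrow G_K$, combined with the identification $H^{2r-1}_{\textup{\'{e}t}}(W_{\overline{K}},\Z_\ell(r)) = H^{2r-1}_{\textup{\'{e}t}}(W_{\overline{K_\mathfrak{p}}},\Z_\ell(r))$. Because the Hochschild-Serre spectral sequence is natural in pairs $(W,k)$, the square commutes on the nose.

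For the right square, fix a smooth proper model $\pi \colon \mathcal{W} \to \mathrm{Spec}\,\mathcal{O}_\mathfrak{p}$ of $W_{K_\mathfrak{p}}$. Given a homologically trivial cycle $Z \in \CH^r(W_{K_\mathfrak{p}})$, the specialization $\mathrm{sp}(Z)$ is defined by taking the $\mathcal{O}_\mathfrak{p}$-flat closure $\overline Z \subset \mathcal{W}$ and restricting to the special fiber. The smooth and proper base change theorem (Milne VI.4.2) shows that $R^i\pi_* \Z_\ell(r)$ are locally constant on $\mathrm{Spec}\,\mathcal{O}_\mathfrak{p}$, and the isomorphism $\theta$ is precisely the identification of the geometric generic and special stalks — each canonically isomorphic to $H^{2r-1}_{\textup{\'{e}t}}(\mathcal{W}_{\overline{\mathcal{O}_\mathfrak{p}^{\mathrm{sh}}}}, \Z_\ell(r))$, where the torsion-freeness hypothesis ensures the identification passes to integral coefficients. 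The cycle class of $\overline Z$ in $H^{2r}_{\textup{\'{e}t}}(\mathcal{W}, \Z_\ell(r))$ restricts to the cycle classes of $Z$ and $\mathrm{sp}(Z)$ on the two fibers; in particular $\mathrm{sp}(Z)$ is again homologically trivial. Comparing the three Hochschild-Serre spectral sequences attached to $\mathcal{W}$, $W_{K_\mathfrak{p}}$, and $W_{\ff_q}$ via restriction maps to fibers then yields commutativity, with the bottom horizontal realized as inflation along $G_{K_\mathfrak{p}} \twoheadrightarrow G_{\ff_q}$ followed by $\theta_*^{-1}$.

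The main obstacle is showing that a single absolute $H^1$-class, built from $\overline Z$ on $\mathcal{W}$, descends correctly to the Bloch classes on both fibers; equivalently, that the Hochschild-Serre connecting maps commute with restriction to fibers of a smooth proper morphism. This is a standard but delicate compatibility — the cleanest way to verify it is to feed the tower $\mathcal{W}_{\overline{\mathcal{O}_\mathfrak{p}^{\mathrm{sh}}}} \to \mathcal{W} \to \mathrm{Spec}\,\mathcal{O}_\mathfrak{p}$ into the Leray spectral sequence, compare with the Hochschild-Serre spectral sequences of the two fibers via the edge maps, and check the diagram of five-term exact sequences. Alternatively, one can sidestep the bookkeeping by working uniformly with continuous étale cohomology à la Jannsen, where $\Sigma$ is manifestly functorial with respect to all the relevant pullbacks and the compatibility is built in.
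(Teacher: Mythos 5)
The paper does not prove this proposition at all: it simply cites \cite[Proposition~1.9~(5), Section~3]{BST}, so your write-up is doing the work that the authors outsource. Your decomposition into two squares is the natural one, and it is essentially the standard argument one expects to find behind the citation: the left square is functoriality of Hochschild--Serre under base change of the ground field, and the right square is handled by spreading the cycle out over a smooth proper model, invoking smooth proper base change to identify the cohomology of the two geometric fibers, and comparing the three spectral sequences. Two caveats. First, you correctly identify that the heart of the matter is the compatibility of the Hochschild--Serre edge/connecting maps with restriction to the fibers of $\mathcal{W}\to\operatorname{Spec}\mathcal{O}_\mathfrak{p}$, but you only gesture at two ways to verify it; since this is exactly the content being certified, a complete proof would have to carry one of them out (the Jannsen continuous-cohomology route is indeed the cleanest). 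Second, your description of the bottom horizontal arrow as ``inflation along $G_{K_\mathfrak{p}}\twoheadrightarrow G_{\ff_q}$'' has the direction reversed: inflation maps $H^1(G_{\ff_q},M)$ \emph{into} $H^1(G_{K_\mathfrak{p}},M)$, whereas the arrow in the diagram goes the other way. What your model argument actually shows is that the class $\Sigma(Z)$ over $K_\mathfrak{p}$ is unramified (it comes from a class over $\operatorname{Spec}\mathcal{O}_\mathfrak{p}$, whose \'etale fundamental group is $G_{\ff_q}$), so it lies in the image of inflation, and the bottom map is the inverse of inflation on unramified classes composed with $\theta^{-1}_*$; this implicit unramifiedness assertion should be made explicit, since without it the bottom-right arrow is not even defined on all of $H^1(G_{K_\mathfrak{p}},\cdot)$. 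With those two points repaired, your argument is a correct and self-contained replacement for the citation.
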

\begin{proof}
This is \cite[Proposition~1.9~(5), Section~3]{BST}.
\end{proof}

\begin{rem}\label{R:hypsat}
The hypothesis on $W$ is satisfied whenever $W=X^n$ for a curve $X$ since the cohomology of $X$ is torsion-free and the Kunneth formula holds for \textup{\'{e}t}ale cohomology.
\end{rem}
\end{defin}

\begin{rem}\label{R:MDCeresa}(Relationship between modified diagonal and Ceresa classes) 
 There is a close relationship between the $b$-pointed modified diagonal class and the Ceresa class of an algebraic curve, whose definition we now recall. Let $i_b \colon X \rightarrow J$ be the Abel-Jacobi map sending $x$ to the class of the degree $0$ divisor $x-b$. The {\defi{Ceresa class $\Cer(X,b)$}} is the cycle class in $H^1(G_k, H^{2g-3}_{et}(J_{\overline{k}}, \Zhat'(g-1)))$ of the homologically trivial $1$-cycle $i_b(X) - [-1]^*i_b(X)$ on $J$. Using the natural isomorphisms of $G_k$-modules $\Lambda^{2g} H^1(X_{\overline{k}}) \cong \Zhat'(1)$ and $H^1(X_{\overline{k}}) \cong H_1(X_{\overline{k}})(-1)$ (Poincar\'e duality), we get a map of coefficient modules $(H^1(X_{\overline{k}}))^{\otimes 3}(2) \rightarrow \Lambda^3 H_1(-1) \cong \Lambda^{2g-3} H^1(X_{\overline{k}}) \cong H^{2g-3}(J_{\overline{k}})$, so we may also view the Ceresa class $\Cer(X,b)$ as a class in $H^1(G_k,\Lambda^3 H_1(-1))$. Let $\omega \in \Lambda^2 H_1(-1)$ be the element corresponding to the intersection pairing. 

Let $i_b^{(3)} \colon X \times X \times X \rightarrow J$ be the map sending $(x,y,z)$ to the class of the degree $0$ divisor $x+y+z-3b$. A direct computation shows that ${i_b^{(3)}}_*(\Delta_b)$ is the cycle $[3]_*(i_b(C))-3 [2]_*(i_b(C))+3i_b(C)$ on $J$. Combining this with the $\ell$-adic analogue of \cite[Proposition~2.9]{ColomboVG} shows that $\nu(X,b)$ maps to  $3\Cer(X,b)$ under the induced map on Galois cohomology groups from the K\"{u}nneth projector. 

At the level of cycles, $\Delta_b$ and $i_b(X) - [-1]^*i_b(X)$ belong to different Chow groups, so it does not make sense to ask whether one is a multiple of the other.  However, it is still true that one cycle is torsion in Chow if and only if the other is~\cite[Thm 1.5.5]{ZhangGS}.

\end{rem}

\section{Certifying nontriviality of modified diagonal cycles}
For the rest of this section $K$ will denote a number field. Let $X$ be a nice curve over $K$. Let $b \in X(K)$. Let $\nu_\ell(X,b)$ be the $\ell$-adic $b$-pointed modified diagonal class of $X$. In \S~\ref{S:Algo}, we give an algorithm which, when it terminates, provably  certifies that the modified diagonal cycle of $X$ has infinite order in $\CH^{2}(X \times X \times X)$. The algorithm combines upper bounds on the torsion order of the modified diagonal class (Proposition~\ref{P:DivofTorsOrd}) obtained using the theory of weights, together with lower bounds obtained from the order of the Frobenius shadow at a good prime $\mathfrak{p}$. In \S~\ref{S:Chebotarev}, we investigate the question of whether the algorithm would always produce a certificate of nontriviality of a nontorsion modified diagonal class if allowed to run long enough.

 \subsection{An algorithm}\label{S:Algo}
 \begin{prop}\label{P:DivofTorsOrd}
Let $M$ be the $G_K$ module  $H^3_{\textup{\'{e}t}}(X_{\overline{K}} \times X_{\overline{K}} \times X_{\overline{K}}, \Z_\ell(2))$.
Let $\mathfrak{p}$ be a prime ideal of $\mathcal{O}_K$ of good reduction for $X$ coprime to $\ell$, and let $N_{\mathfrak{p}} \colonequals \det(\Frob_{\mathfrak{p}}-I)|_M$. Then $N_{\mathfrak{p}}$ is nonzero and independent of $\ell$. Furthermore, if $\nu_\ell(X,b)$ is torsion of order $n$, then $n$ divides $N_{\mathfrak{p}}$.
 \end{prop}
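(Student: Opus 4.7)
The plan is to first prove the nonvanishing and $\ell$-independence of $N_{\mathfrak{p}}$ via a weights argument, and then bound the torsion order of $\nu_\ell(X,b)$ by restricting to a decomposition group, landing in the unramified cohomology of the residue field, and invoking an adjugate identity to annihilate by $N_{\mathfrak{p}}$.

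For the first assertion, the K\"{u}nneth formula decomposes $M$ as a direct sum of summands of the form $H^i(X_{\overline{K}},\Z_\ell) \otimes H^j(X_{\overline{K}},\Z_\ell) \otimes H^k(X_{\overline{K}},\Z_\ell)(2)$ with $i+j+k=3$ and $i,j,k \leq 2$. By Deligne's theorem on weights, the Frobenius eigenvalues on each summand have absolute value $q^{(i+j+k)/2 - 2} = q^{-1/2}$, where $q = N\mathfrak{p}$; in particular none of them equal $1$, so $N_{\mathfrak{p}} = \det(\Frob_{\mathfrak{p}}-I)|_M \neq 0$. Independence from $\ell$ follows since the characteristic polynomials of $\Frob_{\mathfrak{p}}$ on each $H^i(X_{\overline{K}},\Z_\ell)$ have integer coefficients independent of $\ell$, a property inherited under tensor products and Tate twists.

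Now suppose $\nu_\ell(X,b)$ has exact torsion order $n$ in $H^1(G_K,M)$. By Proposition~\ref{P:reduce}, its restriction to $G_{K_{\mathfrak{p}}}$ is identified (via the base change isomorphism) with the corresponding class on the reduction $X_{\mathbb{F}_q}$, and hence lies in the image of the inflation $H^1(G_{\mathbb{F}_q},M) \hookrightarrow H^1(G_{K_{\mathfrak{p}}},M)$; this uses $M^{I_{\mathfrak{p}}} = M$, which holds since $\mathfrak{p}$ is a good prime coprime to $\ell$. Writing $F = \Frob_{\mathfrak{p}}$, we have $H^1(G_{\mathbb{F}_q},M) \cong M/(F-I)M$, and the adjugate identity $(F-I)\cdot \operatorname{adj}(F-I) = N_{\mathfrak{p}}\cdot I$ on the $\Z_\ell$-free module $M$ shows that $N_{\mathfrak{p}}$ annihilates this cokernel. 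Thus it will suffice to show the restriction of $\nu_\ell(X,b)$ still has exact order $n$.

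This preservation of order is the main technical point. From the Kummer-style short exact sequence $0 \to M \xrightarrow{n} M \to M/n \to 0$ and the vanishings $M^{G_K} = 0 = M^{G_{K_{\mathfrak{p}}}}$ (both consequences of the weight computation: Frobenius has no eigenvalue $1$, and $M$ is $\Z_\ell$-free), the long exact sequences in Galois cohomology give isomorphisms $H^1(G_K,M)[n] \cong (M/n)^{G_K}$ and $H^1(G_{K_{\mathfrak{p}}},M)[n] \cong (M/n)^{G_{K_{\mathfrak{p}}}}$. Under these identifications the restriction map is the evident inclusion $(M/n)^{G_K} \hookrightarrow (M/n)^{G_{K_{\mathfrak{p}}}}$, so restriction is injective on $n$-torsion in $H^1$. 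Applying this argument with $n$ replaced by each proper divisor $m < n$ shows $m\cdot\nu_\ell(X,b)|_{\mathfrak{p}} \neq 0$; the restriction therefore has exact order $n$, and the previous paragraph then gives $n \mid N_{\mathfrak{p}}$.
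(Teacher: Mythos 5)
Your proof is correct, and for the divisibility claim it takes a genuinely different (local) route from the paper's (global) one; the weight argument for the first assertion is the same in both. Both proofs start from the sequence $0 \to M \xrightarrow{\times n} M \to M/nM \to 0$ together with $M^{G_K}=0$, which identifies $H^1(G_K,M)[n]$ with $H^0(G_K,M/nM)$, so that a class of exact order $n$ yields a $G_K$-invariant vector $v \in M/nM$ of exact order $n$. The paper stops there: $v$ is killed by $\Frob_{\mathfrak{p}}-1$ acting on the free $\Z/n\Z$-module $M/nM$, and the adjugate identity forces $\det(\Frob_{\mathfrak{p}}-1) \equiv 0 \pmod{n}$ on $M/nM$, i.e.\ $n \mid N_{\mathfrak{p}}$; no restriction to a decomposition group and no appeal to Proposition~\ref{P:reduce} is needed. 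You instead use the identification only to prove that restriction is injective on $n$-torsion, then pass to $G_{K_{\mathfrak{p}}}$, invoke the specialization compatibility of Proposition~\ref{P:reduce} to see that the local class is unramified, and apply the adjugate identity to $M/(F-I)M \cong H^1(G_{\ff_q},M)$. This is valid, and it proves the slightly stronger local fact that $N_{\mathfrak{p}}$ annihilates the whole unramified $H^1$ at $\mathfrak{p}$; the cost is that it imports the nontrivial input that the cycle class map commutes with specialization, which the paper's argument avoids entirely. One small simplification: a single application of the injectivity of restriction on $H^1(G_K,M)[n]$ already shows that the cyclic subgroup generated by $\nu_\ell(X,b)$ maps isomorphically, so the restricted class has exact order $n$; there is no need to rerun the argument for each proper divisor of $n$.
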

\begin{proof}
Since $\mathfrak{p}$ is a prime of good reduction for $X$, and since $\ell$ is coprime to $\mathfrak{p}$, the weights for the $G_K$ action on $H^3_{\textup{\'{e}t}}(X_{\overline{K}} \times X_{\overline{K}} \times X_{\overline{K}}, \Z_\ell(2))$ are $-1$. Furthermore, the characteristic polynomial of $\Frob_{\mathfrak{p}}$ has integer coefficients and is independent of $\ell$. In particular $1$ is not an eigenvalue for $\Frob_{\mathfrak{p}}$ acting on $M$ and it follows that $N_{\mathfrak{p}}$ is a nonzero integer. 

We have a surjection $H^0(G_K,M/nM) \rightarrow H^1(G_K,M)[n]$ arising from the short exact sequence of $G_K$-modules $0 \rightarrow M \xrightarrow{\times n} M \rightarrow M/nM \rightarrow 0$.  The hypotheses of the proposition guarantee the existence of an element of exact order $n$ in $H^1(G_K,M)[n]$ and whence also in $H^0(G_K,M/nM)$.  This latter element $v$ is fixed by $G_K$ and in particular by $\Frob_{\mathfrak{p}}$.  So $\Frob_{\mathfrak{p}} - 1$ is an endomorphism of the free $\Z/n\Z$-module $M/nM$ which kills an element $v$ of exact order $n$; it follows that $n | \det \Frob_{\mathfrak{p}} - 1$, which was to be proved.
 \end{proof}

\begin{cor}\label{C:DivofOrd}
With notation as in Proposition~\ref{P:DivofTorsOrd}, if $\nu(X,b)$ is torsion of order $n$, then $n$ divides $p^k N_{\mathfrak{p}}$ for some $k \geq 0$.  \end{cor}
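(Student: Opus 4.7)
The plan is to reduce the corollary to Proposition~\ref{P:DivofTorsOrd} by decomposing $\nu(X,b)$ into its $\ell$-adic components and treating the prime $p$ above $\mathfrak{p}$ separately. Write $p$ for the residue characteristic of $\mathfrak{p}$. Since $K$ has characteristic $0$, the coefficient ring $\Zhat'$ appearing in the definition of $\nu(X,b)$ equals $\Zhat = \prod_{\ell} \Z_\ell$, and accordingly we have a splitting of $G_K$-modules
\[
H^3_{\textup{\'{e}t}}(X_{\overline{K}}^3, \Zhat(2)) \;=\; \prod_{\ell} H^3_{\textup{\'{e}t}}(X_{\overline{K}}^3, \Z_\ell(2)).
\]
Taking continuous Galois cohomology commutes with this product (each factor is a finitely generated $\Z_\ell$-module, so the product satisfies the Mittag--Leffler condition needed to interchange $H^1$ with $\prod$), and under the resulting identification the class $\nu(X,b)$ corresponds to the tuple $(\nu_\ell(X,b))_\ell$.

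Now assume $\nu(X,b)$ is torsion of order $n$. Then each $\nu_\ell(X,b)$ is torsion, its order $n_\ell$ is a power of $\ell$, and only finitely many $n_\ell$ are $>1$; moreover $n = \prod_\ell n_\ell$. Write $n = p^a \cdot m$ where $m = \prod_{\ell \neq p} n_\ell$ is coprime to $p$, and $a \geq 0$ is finite by the torsionness assumption.

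For each prime $\ell \neq p$, the hypotheses of Proposition~\ref{P:DivofTorsOrd} are satisfied: $\mathfrak{p}$ is coprime to $\ell$ and of good reduction for $X$. That proposition therefore gives $n_\ell \mid N_{\mathfrak{p}}$ (and, crucially, $N_{\mathfrak{p}}$ itself is an integer independent of $\ell$, being a characteristic polynomial value computed from weight considerations). Since the $n_\ell$ for distinct $\ell \neq p$ are coprime powers of distinct primes, their product $m$ also divides $N_{\mathfrak{p}}$. Combining this with the trivial bound $p^a \mid p^a$ on the $p$-part yields $n = p^a m \mid p^a N_{\mathfrak{p}}$, so the conclusion holds with $k = a$.

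The only substantive point is the justification of the product decomposition of $H^1(G_K, H^3_{\textup{\'{e}t}}(X_{\overline{K}}^3, \Zhat(2)))$ over $\ell$; everything else is a bookkeeping argument separating the $p$-part of $n$ from the prime-to-$p$ part and invoking the preceding proposition on the latter.
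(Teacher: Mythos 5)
Your proof is correct and takes essentially the same route as the paper's, which simply observes that Proposition~\ref{P:DivofTorsOrd} bounds the order of $\nu_\ell(X,b)$ by $N_{\mathfrak{p}}$ for every $\ell \neq p$ and absorbs the $p$-primary part into $p^k$; you have only made the $\ell$-by-$\ell$ bookkeeping explicit. (One minor quibble: the interchange of $H^1$ with the product over $\ell$ needs no Mittag--Leffler argument --- a continuous cochain valued in a product of topological modules is just a tuple of continuous cochains valued in the factors, so the cochain complexes split already.)
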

\begin{proof} Proposition~\ref{P:DivofTorsOrd} shows that the order of $\nu_\ell(X,b)$ divides $N_{\mathfrak{p}}$ for all $\ell \neq p$.
\end{proof}

Recall the definition of the Frobenius shadow of a curve over a finite field from Definition~\ref{D:FroSh}.

\begin{prop}\label{P:FroShDiv}
Let $\mathfrak{p}'$ be a prime ideal of $\mathcal{O}_K$ of good reduction for $X$. Let $M_{\mathfrak{p}'}$ be the order of the Frobenius shadow of the reduction of $X$ modulo $\mathfrak{p}'$. If $\nu(X,b)$ is torsion of order $n$, then $M_{\mathfrak{p}'}$ divides $(p'^{k'}) n$ for some $k' \geq 0$. 
\end{prop}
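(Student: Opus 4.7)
The plan is to port the torsion bound on the global class $\nu(X,b)$ down to $\overline{\ff_q}$ via specialization, and then turn the geometric shadow construction into a cup product in Galois cohomology in order to transfer the bound to the order of the Frobenius shadow in $\Pic^0(X_{\ff_q})$. The two workhorses are Proposition~\ref{P:reduce} (compatibility of cycle class maps with reduction) and Proposition~\ref{P:IntersectToCup} (translation of the shadow construction into a cup product), together with the identification of $\Sigma_X$ with the Kummer map (Remark~\ref{R:CycKummer}).

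First I would specialize. Applying Proposition~\ref{P:reduce} to $W = X^3$ (whose cohomology is torsion-free by Remark~\ref{R:hypsat}), I get that $\nu(X,b)$ restricts to the class $\nu(X_{\ff_q}, \overline{b})$ of the reduction, since the $b$-pointed modified diagonal is defined by universal operations. Because $G_K$-restriction to $G_{K_{\mathfrak{p}'}}$ is a group homomorphism on $H^1$, the specialized class $\nu(X_{\ff_q},\overline{b}) \in H^1(G_{\ff_q}, H^3_{\textup{\'et}}(X_{\overline{\ff_q}}^3, \Zhat'(2)))$ is annihilated by $n$ (where, recall, $\Zhat' = \prod_{\ell \neq p'}\Z_\ell$ excludes the residue characteristic).

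Next I would apply Proposition~\ref{P:IntersectToCup} with the correspondence $Z' \colonequals (2g-2) Z_{\Frob_{p'}} - |X(\ff_q)|\,\Delta$ used in the proof of Proposition~\ref{P:CanBP}. On the cycle side, $\Sh(Z',\overline{b})$ is, independently of $\overline{b}$, the Frobenius shadow $(2g-2)X(\ff_q) - |X(\ff_q)| K_X$ of Definition~\ref{D:FroSh}. On the cohomology side, the commutative square identifies $\Sigma_{X_{\ff_q}}(\text{Frobenius shadow})$ with the cup-product pairing of $\nu(X_{\ff_q}, \overline{b})$ against the cohomology class of $Z'$. Since $n$ annihilates the former input, it also annihilates the output, so $n \cdot \Sigma_{X_{\ff_q}}(\text{Frobenius shadow}) = 0$ in $H^1(G_{\ff_q}, H^1_{\textup{\'et}}(X_{\overline{\ff_q}}, \Zhat'(1)))$.

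Finally I would invoke Remark~\ref{R:CycKummer}: $\Sigma_{X_{\ff_q}}$ is the Kummer map, and on the finite group $\Pic^0(X_{\ff_q})$ its restriction to the prime-to-$p'$ part is injective. Therefore the prime-to-$p'$ part of $n \cdot (\text{Frobenius shadow})$ vanishes in $\Pic^0(X_{\ff_q})$, which is exactly the statement that $M_{\mathfrak{p}'} \mid p'^{k'} n$ for some $k' \geq 0$. The only real subtlety, and the reason the conclusion is stated only up to a factor of $p'^{k'}$, is that the \'etale-cohomological cycle class machinery of \S2.1 discards the $p'$-primary information in residue characteristic $p'$; everything else is a formal consequence of the compatibilities already packaged in Propositions~\ref{P:reduce} and~\ref{P:IntersectToCup}.
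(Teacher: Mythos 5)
Your proposal is correct and follows essentially the same route as the paper's proof: specialize via Proposition~\ref{P:reduce}, pair with the correspondence $Z'=(2g-2)Z_{\Frob}-|X(\ff_q)|\Delta$ from Proposition~\ref{P:CanBP} using Proposition~\ref{P:IntersectToCup}, and conclude via the injectivity of the Kummer map on prime-to-$p'$ torsion (Remark~\ref{R:CycKummer}). The paper compresses all of this into the single observation that the Kummer class of the Frobenius shadow is the image of $\nu(X,b)$ under a group homomorphism, but the homomorphism is exactly the composite you describe.
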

\begin{proof}
Let $\mathbb{F}$ denote the residue field at $\mathfrak{p}'$. Let $\kappa_{\mathfrak{p}'}$ denote the image of the Frobenius shadow at $\mathfrak{p}'$ under  the Kummer map $\kappa \colon \Pic^0(X_{\mathbb{F}}) \rightarrow H^1(G_{\mathbb{F}},H^1_{\textup{\'{e}t}}(X_{\mathbb{F}}, \Zhat'(1)))$. Since $\kappa$ is injective on the prime-to-$p'$ torsion of $\Pic^0(X_{\mathbb{F}})$, we see that $M_{\mathfrak{p}'}$ divides $p'^{k'} \ord(\kappa_{\mathfrak{p}'})$. 

Proposition~\ref{P:CanBP}, Proposition~\ref{P:reduce} and Remark~\ref{R:hypsat} with $W=X^3, r = 2$ and Proposition~\ref{P:IntersectToCup} imply that $\kappa_{p'}$ is the image of $\nu(X,b)$ under a group homomorphism, and hence $\ord(\kappa_{\mathfrak{p}'})$
divides the order of $\nu(X,b)$. Combining the last two sentences gives the result. 
\end{proof}

\begin{prop}\label{P:CAlgo} Let $\mathfrak{p},\mathfrak{p}'$ be prime ideals of $\mathcal{O}_K$ of good reduction for $X$. Let $M_{\mathfrak{p}'}$ be the order of the Frobenius shadow of the reduction of $X$ modulo $\mathfrak{p}'$. Let $N_{\mathfrak{p}}$ be as in Proposition~\ref{P:DivofTorsOrd}. If $(\mathfrak{p}')^{-k'}M_{\mathfrak{p}'} \nmid p^kN_{\mathfrak{p}}$ for every $k,k' \geq 0$, then $\nu(X,b)$, and a fortiori the modified diagonal cycle of $X$, has infinite order.
\end{prop}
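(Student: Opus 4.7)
The proof will be a short contrapositive argument combining Corollary~\ref{C:DivofOrd} and Proposition~\ref{P:FroShDiv}. The plan is to suppose that $\nu(X,b)$ has finite order $n$ and then produce integers $k,k' \geq 0$ such that $M_{\mathfrak{p}'}$ divides $(p')^{k'}p^k N_{\mathfrak{p}}$, violating the hypothesis.

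First, assuming $\nu(X,b)$ is torsion of order $n$, I would apply Corollary~\ref{C:DivofOrd} to the prime $\mathfrak{p}$ to obtain an integer $k \geq 0$ with $n \mid p^k N_{\mathfrak{p}}$. Next, I would apply Proposition~\ref{P:FroShDiv} to the prime $\mathfrak{p}'$ to obtain an integer $k' \geq 0$ with $M_{\mathfrak{p}'} \mid (p')^{k'} n$. Combining these divisibilities gives
\[ M_{\mathfrak{p}'} \mid (p')^{k'} n \mid (p')^{k'} p^k N_{\mathfrak{p}}, \]
which (after clearing the factor of $(p')^{k'}$ in the standard sense, i.e., reading $(\mathfrak{p}')^{-k'} M_{\mathfrak{p}'} \mid p^k N_{\mathfrak{p}}$ in the statement of the proposition as the equivalent assertion $M_{\mathfrak{p}'} \mid (p')^{k'} p^k N_{\mathfrak{p}}$) contradicts the hypothesis that no such pair $(k,k')$ exists. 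The ``a fortiori'' clause follows because the cycle class map $\Sigma$ of~\eqref{E:Bloch} is a homomorphism, so torsion of the modified diagonal cycle itself would force $\nu(X,b)$ to be torsion.

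There is no real obstacle here: the substantive input is already contained in the weights-based upper bound of Proposition~\ref{P:DivofTorsOrd} (controlling the torsion order $\ell$-adically for each $\ell \neq p$), in the specialization compatibility encoded in Proposition~\ref{P:reduce}, and in the comparison between the Frobenius shadow and the cycle class provided by Proposition~\ref{P:IntersectToCup} together with Proposition~\ref{P:CanBP}. Once those two divisibility estimates are on the table, the proposition is purely arithmetic bookkeeping about divisibility of integers (with the nuisance of handling the $p$ and $p'$ parts separately, since the Bloch cycle class map of~\eqref{E:Bloch} omits the $\ell = \operatorname{char}$ factor and the Kummer map fails to detect $p'$-torsion in $\Pic^0$ of the reduction).
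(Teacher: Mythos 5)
Your proof is correct and is exactly the paper's argument: the paper's entire proof reads ``Combine Corollary~\ref{C:DivofOrd} and Proposition~\ref{P:FroShDiv},'' and you have simply written out that combination in contrapositive form, with the divisibility bookkeeping done correctly.
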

\begin{proof}  
Combine Corollary~\ref{C:DivofOrd} and Proposition~\ref{P:FroShDiv}.
\end{proof}

The following algorithm thus provably certifies nontriviality of $\nu(X,b)$ and therefore also of the modified diagonal cycle $\Delta_b$.

\begin{algo}\label{alg:upperbound} \hfill
\begin{enumerate}[\upshape (1)]
\item Choose a nonempty finite set \(\mathcal{T}\) of auxiliary good primes of $X$.
\item For each \(\mathfrak{p}\) in $T$, compute the integer $N_{\mathfrak{p}} \colonequals \det(\Frob_{\mathfrak{p}}-I)|_M$, where $M$ is the $G_K$-module in Proposition~\ref{P:DivofTorsOrd}, and the order $M_{\mathfrak{p}}$ of the Frobenius shadow of the reduction of $X$ modulo $\mathfrak{p}$.

\item Let \(\widetilde{N} = \gcd_{\mathfrak{p} \in \mathcal{T}}(p^{\infty}N_{\mathfrak{p}})\), and let \(\widetilde{M} = \lcm_{p \in \mathcal{T}}(p^{-\infty}M_{\mathfrak{p}})\).
\end{enumerate}
If $\widetilde{M} \nmid \widetilde{N}$, then output that the modified diagonal cycle of $X$ has infinite order.
\end{algo}
 \begin{rem}\label{R:CerTorGeom}The property of $\nu_\ell(X,b)$ being torsion is a geometric property of $X$, i.e., can be checked after a finite base extension of the ground field. More precisely, let $K$ be any field, and let $L$ be a finite extension. Let $X$ be a nice curve over $K$, and let $b \in X(K)$. Let $\ell \neq \chara(K)$ be a prime. Then $\nu_\ell(X,b)$ is torsion if and only if $\nu_\ell(X_L,b)$ is torsion.
 This follows from the restriction-corestriction sequence for Galois cohomology---see, e.g., \cite[Proposition~2.24]{BLLS}. 
\end{rem}

\subsection{Chebotarev}\label{S:Chebotarev}

We now turn to the question complementary to the one treated above, namely: how might we certify that the modified diagonal class of a curve $X/K$ {\em is} torsion?  For instance, suppose we attempt to use the algorithm above to show that a curve $X$ over a number field $K$ has nontorsion $\nu(X,b)$; we compute $N_{\mathfrak{p}}$ for prime after prime, only to find that rather than growing this lower bound is fixed at, say, $12$.  Is there some finite amount of data of this kind that proves that $\nu(X,b)$ is, in fact, torsion? 

In this direction, we prove the following theorem, which shows that generic curves with non-torsion $\nu(X,b)$ have reductions mod $p$ whose $\ell$-adic classes $\nu_\ell(X_{\mathbb{F}_\mathfrak{p}},b)$ have unbounded orders.  The main tool is the Chebotarev density theorem, which means that with some effort (and some tolerance for pretty bad bounds) the result could be made effective.

\begin{notation} Denote $H^1_{et}(\bar{X},\Z_\ell)$ by $H$.  Replacing $K$ by a finite extension if necessary, take $b$ to be the canonical base point, i.e., a degree $1$ divisor such that $(2g-2)b$ is linearly equivalent to $K_C$. Let $\Delta_b$ be the corresponding modified diagonal; then, as we observed in  Remark~\ref{R:wheredoesMDlive}, the image $\nu_\ell(X,b)$ of $\Delta_b$ under the cycle class map to \textup{\'{e}t}ale cohomology lies in $H^1(G_K, (H \otimes H \otimes H)(2)).$ 
\end{notation}

\begin{thm}\label{T:shadowcheb}
Let $X$ be a curve of genus $g \geq 3$ over a number field $K$ and let $\ell$ be a prime greater than $3$.  Suppose the image of $G_K$ in the automorphism group of $H_1(X_{\bar{K}}, \Z_\ell)$ has finite index in $\GSp_{2g}(\Z_\ell)$, and suppose furthermore that $\nu_\ell(X,b)$ is not torsion.  Then, for every integer $k \geq 0$, there is a prime $\mathfrak{p}$ of $K$ (prime to $\ell$ and of good reduction for $X$) such that the order of the Frobenius shadow in $\Pic^0(X)(\mathbb{F}_\mathfrak{p})$ is divisible by $\ell^k$.
\end{thm}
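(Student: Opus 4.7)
The plan is to prove the contrapositive: assume there is a bound $\ell^k$ on the order of every Frobenius shadow at primes of good reduction coprime to $\ell$, and conclude that $\nu_\ell(X,b)$ must be torsion.

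\textbf{Step 1: Cohomological formulation of the Frobenius shadow.} Combining Propositions~\ref{P:FroShDiv}, \ref{P:IntersectToCup}, and \ref{P:reduce} (applied to $W = X^3$, using Remark~\ref{R:hypsat}) and Remark~\ref{R:CycKummer}, the Kummer image of the Frobenius shadow at a good prime $\mathfrak{p} \nmid \ell$ equals
\[
s_\mathfrak{p}(\nu_\ell(X,b)) = \phi\bigl(c(\Frob_\mathfrak{p}),\, [\Gamma_\mathfrak{p}]\bigr) \bmod (\Frob_\mathfrak{p}-1)H(1) \ \in\ H^1(G_{\mathbb{F}_\mathfrak{p}}, H(1)),
\]
where $c: G_K \to V := H \otimes H \otimes H(2)$ is any continuous cocycle representing $\nu_\ell(X,b)$ and $[\Gamma_\mathfrak{p}] \in (H \otimes H(1))^{G_{\mathbb{F}_\mathfrak{p}}}$ is the K\"unneth component in $H \otimes H(1)$ of the class of the graph of Frobenius. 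Crucially, under the Poincar\'e-duality identification $H \otimes H(1) \cong \End(H)$, $[\Gamma_\mathfrak{p}]$ corresponds to the Frobenius endomorphism itself, so that writing $c(\Frob_\mathfrak{p}) = \sum_i h_1^{(i)} \otimes h_2^{(i)} \otimes h_3^{(i)}$ one obtains the explicit formula $\phi(c(\Frob_\mathfrak{p}),[\Gamma_\mathfrak{p}]) = \sum_i B_{\Frob_\mathfrak{p}}(h_1^{(i)}, h_2^{(i)})\, h_3^{(i)}$, where $B_\sigma(x,y) := \langle x, \sigma y\rangle$ is the Poincar\'e pairing twisted by $\sigma \in \End(H)$. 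The bounded-order hypothesis then reads
\[
\ell^k\, \phi\bigl(c(\Frob_\mathfrak{p}), \Frob_\mathfrak{p}\bigr) \in (\Frob_\mathfrak{p}-1)\, H(1) \qquad \text{for all admissible } \mathfrak{p}. \tag{$\ast$}
\]

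\textbf{Step 2: Chebotarev and the big-image hypothesis.} Let $G \subseteq \GSp_{2g}(\Z_\ell)$ be the image of $G_K$, which by hypothesis is open, and hence of finite index. By Chebotarev density, the conjugacy classes of the $\Frob_\mathfrak{p}$'s are equidistributed in $G$. Since the constraint ($\ast$) is closed and $c$ is continuous, the condition propagates from a dense subset to all of $G$: after appropriately lifting $\sigma \in G$ to $G_K$, one obtains
\[
\ell^k\, \phi(c(\sigma), \sigma) \in (\sigma-1)\, H(1) \qquad \text{for every } \sigma \in G_K. \tag{$\ast\ast$}
\]
(A subtlety to be checked: different lifts of the same $\sigma \in G$ change $c(\sigma)$ by a value of $c$ on the kernel of $G_K \to G$, but this kernel acts trivially on $V$, so ambiguity lies in $(\sigma-1)V$ on the first factor and is absorbed by passing to the $H^1$-quotient.)

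\textbf{Step 3: From universal shadow vanishing to torsion.} The goal is to show ($\ast\ast$) forces $\nu_\ell(X,b)$ to be torsion. Decompose $V = H \otimes H \otimes H(2)$ into irreducible $\GSp_{2g}$-constituents, and consider the family of pairings $\{\phi(-, \sigma) : V \to H(1)\}_{\sigma \in G}$. The strategy, following the insight attributed to Venkatesh in the acknowledgments, is that as $\sigma$ ranges over $G$ (in particular, one may pick $\sigma$ whose characteristic polynomial on $H$ has distinct eigenvalues, or is otherwise generic), the pairings $B_\sigma$ span a large enough space of bilinear forms on $H$ that, combined with the cocycle identity $c(\sigma\tau) = c(\sigma) + \sigma c(\tau)$, the collective constraints ($\ast\ast$) are strong enough to exhibit the cocycle $c$ as a coboundary modulo $\ell^N$-torsion for some $N = N(k, g)$. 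Concretely, one composes ($\ast\ast$) for cleverly chosen $\sigma, \tau, \sigma\tau$ to extract, via the cocycle identity, constraints that involve $c(\sigma)$ linearly paired against many different $B_\tau$'s; as $\tau$ varies the $B_\tau$'s jointly separate elements of $H \otimes H$, which together with the third-factor output of $\phi$ separates elements of $V$ up to bounded $\ell$-torsion.

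\textbf{The main obstacle} is Step 3. Steps 1 and 2 are relatively formal manipulations of material already developed in the paper. The difficulty is that the pairing $\phi(-, \sigma)$ and the input $c(\sigma)$ both vary with $\sigma$, so one cannot simply invert a fixed linear map; the argument must use the cocycle relation to decouple these two dependencies, and invoke the representation-theoretic structure of $V$ under $\GSp_{2g}$ (likely using that $H \otimes H$ decomposes as $\Sym^2 H \oplus \wedge^2 H$, and the latter contains the symplectic form, to show that generic $B_\sigma$'s together with $\GSp_{2g}$-translates jointly surject onto dual functionals on each irreducible summand of $V$). Making this quantitative would yield the effective version mentioned in the introduction.
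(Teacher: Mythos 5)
Your Steps 1 and 2 are sound and closely parallel machinery the paper itself sets up (the group-theoretic shadow $\Sh_\sigma$ of Definition~\ref{D:Grpthyshadow}, the comparison with the Frobenius shadow in Remark~\ref{R:ShadowsAgree}, and the closedness/continuity statement of Lemma~\ref{L:ContinuitySh}, which is exactly what lets one pass between Frobenii and arbitrary $\sigma\in G_K$ via Chebotarev). But Step 3, which you correctly identify as the main obstacle, is not a technical refinement to be filled in later: it is the entire mathematical content of the theorem, and your sketch of it does not contain the idea that makes it work. The paper's proof is not a ``span enough bilinear forms $B_\sigma$'' argument. It rests on an external deep input: the theorem of Hain and Matsumoto (\cite[Theorem 10.6]{hainmatsumoto}), which says that under the big-monodromy hypothesis, non-torsionness of $\nu_\ell(X,b)$ forces the restriction $J\colon G_L\to \wedge^3_0 H$ of the cocycle to the subgroup $G_L$ acting trivially on $H$ to have image of finite ($\ell$-power) index in $\wedge^3_0 H$ (Proposition~\ref{P:JhasBigImage}). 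Given that, one picks $\tau\in G_L$ with $J(\tau)=\ell^\alpha e_1\wedge e_2\wedge e_3$ and an explicit $\sigma$ (fixing $e_1,f_1$, with $\chi(\sigma)=1$, acting on $\langle e_2,f_2,e_3,f_3\rangle$ by a suitable $B\in\Sp_4(\Z_\ell)$) so that the cocycle identity gives $\Sh_{\tau\sigma}=\Sh_\sigma+ce_1$ with $c\neq 0$ and $e_1$ of infinite order in $H/(\sigma-1)H$ (Lemma~\ref{L:ShPerturb} and Proposition~\ref{P:Maketargetlarge}); hence one of $\Sh_\sigma$, $\Sh_{\tau\sigma}$ has infinite order, and Chebotarev finishes. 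Your proposed contrapositive, once unwound, would have to reproduce exactly these two steps; without the Hain--Matsumoto input there is no reason the image of $J$ is large, and no amount of representation theory of $\GSp_{2g}$ acting on $H^{\otimes 3}$ substitutes for it.

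There is also a concrete warning sign that your Step 3 cannot be a formal consequence of the constraints $(\ast\ast)$: Remark~\ref{rem:need-monodromy} and Corollary~\ref{R:trivFrobSD} exhibit curves (e.g.\ the Fermat quartic) for which \emph{every} Frobenius shadow is killed by a fixed integer, yet $\nu_\ell(X,b)$ is non-torsion. For such curves the condition $(\ast\ast)$ holds for all $\sigma$ in the image of $G_K$, so the implication ``universal bounded shadows $\Rightarrow$ torsion'' is simply false for subgroups of $\GSp_{2g}(\Z_\ell)$ that are not open. Any correct completion of your Step 3 must therefore use openness of the image in an essential, non-generic way --- which is precisely what the explicit choice of $\sigma$ in Proposition~\ref{P:Maketargetlarge} does --- and must import the Hain--Matsumoto theorem to know the cocycle restricted to $G_L$ has large image. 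As written, the proposal has a genuine gap at its central step.
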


Before proving this proposition, we gather some preparatory definitions and lemmas. In Definition~\ref{D:Grpthyshadow}, we describe how to attach a shadow to an arbitrary element of $G_K$ purely group-theoretically, generalizing the definition of the Frobenius shadow. Under the hypotheses that $G_K$ has big image on $H$ and that $\nu(X,b)$ has infinite order, we show that any element $\sigma_0$ of $G_K$ may be perturbed to an element $\sigma$ which acts in the same way on $H$, but whose group-theoretic shadow has infinite order (Proposition~\ref{P:JhasBigImage} and Proposition~\ref{P:Maketargetlarge}). Since $\Sh_{\sigma}$ varies continuously with $\sigma$ (Lemma~\ref{L:ContinuitySh}), we apply the Chebotarev density theorem to argue that for any integer $k$, there exists a $\Frob_p$ close to $\sigma$ as above in the profinite topology whose Frobenius shadow has order divisible by $\ell^k$. 

Let $\chi$ denote the $\ell$-adic cyclotomic character of $G_K$. For any $\sigma \in G_K$, the action of $\sigma$ affords an endomorphism of $H$, which we may think of as an element $\Gamma_\sigma$ of $(H \otimes H)(1)$ by means of the symplectic form on $H$.  Note that the Tate twist comes from the fact that, while $H$ is isomorphic to its dual as an abelian group, this isomorphism cannot in general be made $\sigma$-equivariant; rather, the dual of $H$ is naturally isomorphic to $H(1)$ -- here $H(1)$ denotes the $\sigma$-module on which $\sigma$ acts via $\chi(\sigma)\sigma$.
\begin{defin}\label{D:Grpthyshadow} Let $\sigma \in G_K$.  The {\defi{group-theoretic shadow}} $\Sh_\sigma$ of $\sigma \in G_K$ is obtained by pairing the restriction of $\nu_\ell(X,b)$ to $H^1(\langle \sigma \rangle, (H \otimes H \otimes H)(2))$ with $\Gamma_\sigma \in H^0(\langle \sigma \rangle, (H \otimes H)(1))$, again using the symplectic form on $H$ to contract the first two factors of the coefficients of $\nu_\ell(X,b)$ with the coefficients of $\Gamma_\sigma$.  The resulting product $\nu(X,b) | \langle \sigma \rangle \cup \Gamma_\sigma$ then lies in $H^1(\langle \sigma \rangle, H(1)) \cong H / (\sigma \chi(\sigma) - 1) H.$ 
\end{defin}

\begin{rem}\label{R:ShadowsAgree}
This definition is meant to generalize the notion of Frobenius shadow for curves over finite fields.  Recalling Proposition~\ref{P:IntersectToCup} and the discussion following, we observe that when $\mathfrak{p}$ is a prime of good reduction for $X$ which is prime to $\ell$, we have that $(2g-2)\Sh_{\Frob_\mathfrak{p}}$ in $G_K$ is the $\ell$-adic Kummer class of the Frobenius shadow $\Sh(\Frob_\mathfrak{p}, b)$ we have already defined in $\Pic^0(X)(\mathbb{F}_\mathfrak{p})$.
\end{rem}

\begin{lem}\label{L:ContinuitySh}
For every nonnegative integer $k$, the set $\Sigma_k$ of $\sigma \in G_L$ such that $\ell^k \Sh_{\sigma} = 0$ is closed in the profinite topology.
\end{lem}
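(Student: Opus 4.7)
The plan is to realize $\Sh_\sigma$ as the residue of a continuous $H$-valued function of $\sigma$ modulo an $\ell$-adically varying submodule, and then deduce closedness of $\Sigma_k$ from compactness of $H$.

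First, fix once and for all a continuous $1$-cocycle $\tilde{c}: G_K \to (H\otimes H\otimes H)(2)$ representing the class $\nu_\ell(X,b)$, and let
\[
\phi: (H\otimes H\otimes H)(2) \otimes (H\otimes H)(1) \longrightarrow H(1)
\]
denote the $G_K$-equivariant contraction map induced by the symplectic pairing on the first two pairs of tensor factors, as used in Definition~\ref{D:Grpthyshadow}. Because $G_K$ acts continuously on $H$, the assignment $\sigma \mapsto \Gamma_\sigma \in (H\otimes H)(1)$ is continuous, and hence so is
\[
F: G_K \longrightarrow H, \qquad F(\sigma) \colonequals \phi\bigl(\tilde c(\sigma)\otimes \Gamma_\sigma\bigr),
\]
where I identify $H(1)$ with $H$ as a $\Z_\ell$-module. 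Unpacking the procyclic cohomology identification $H^1(\langle\sigma\rangle, H(1)) \cong H/(\sigma\chi(\sigma)-1)H$, the cup product class $\nu_\ell(X,b)|_{\langle\sigma\rangle}\cup \Gamma_\sigma$ is represented by $F(\sigma)$, so that $\ell^k\Sh_\sigma = 0$ if and only if $\ell^k F(\sigma) \in (\sigma\chi(\sigma)-\mathrm{id}_H)H$.

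With this reformulation, $\Sigma_k$ is the preimage under the continuous map
\[
\Phi: G_K \longrightarrow H\times \End_{\Z_\ell}(H), \qquad \Phi(\sigma) = \bigl(\ell^k F(\sigma),\; \sigma\chi(\sigma) - \mathrm{id}_H\bigr)
\]
of the subset
\[
S = \bigl\{(v,\psi) \in H\times \End_{\Z_\ell}(H) : v \in \psi(H)\bigr\}.
\]
The only nontrivial step is to check that $S$ is closed. Suppose $(v_n,\psi_n) \to (v,\psi)$ in the $\ell$-adic topology with $v_n = \psi_n(w_n)$ for some $w_n \in H$. Since $H \cong \Z_\ell^{2g}$ is compact, after passing to a subsequence we may assume $w_n \to w \in H$, and then $v = \lim \psi_n(w_n) = \psi(w)\in \psi(H)$. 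This compactness argument — essentially the only ingredient beyond bookkeeping — is the step I expect to be the main obstacle to formalize cleanly, and everything else is just tracking definitions and continuity.
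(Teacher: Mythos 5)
Your proof is correct and follows essentially the same route as the paper's: fix a continuous cocycle representing $\nu_\ell(X,b)$, observe that $\ell^k\Sh_\sigma = 0$ means $\ell^k$ times the pairing with $\Gamma_\sigma$ lands in $(\sigma\chi(\sigma)-1)H$, and conclude by continuity. The only difference is that you make explicit (via the compactness of $H$ and the closedness of $S=\{(v,\psi): v\in\psi(H)\}$) the step the paper dispatches with the phrase that everything ``varies continuously with $\sigma$,'' which is a welcome amplification rather than a deviation.
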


\begin{proof}
Let $\zeta$ be a continuous $1$-cycle from $G_K$ to $H \otimes H \otimes H(2)$ representing the class $c_b$.
To say $\ell^k\Sh_{\sigma} = 0$ is to say that the pairing of $\ell^k \zeta(\sigma)$ with $\Gamma_\sigma$ in $H$ lies in the subspace $(\sigma \chi(\sigma) -1) H$.  The desired statement follows from the fact that $\zeta(\sigma)$, $\Gamma_\sigma$, and the subspace $(\sigma \chi(\sigma) -1)H$ all vary continuously with $\sigma$.  
\end{proof}

\begin{notation}\label{N:J} We will use the following notation for the remainder of this section.
\begin{itemize}
\item Let $L/K$ be the minimal extension such that $G_L$ acts trivially on $H$. Then $\nu(X,b)$ restricts to an element of $H^1(G_L, H \otimes H \otimes H(2))$, which is to say a homomorphism $J: G_L \ra H \otimes H \otimes H(2)$. 

\item Let $\wedge^3 H$ be the subspace of $H \otimes H \otimes H(2)$ consisting of those elements transforming as the sign character under the natural action of $S_3$. 
We write $h_1 \wedge h_2 \wedge h_3 \in \wedge^3 H$ to mean the element of $H \otimes H \otimes H$ of the form $\sum_{\pi \in S_3} \sgn(\pi) h_{\pi(1)} \otimes h_{\pi(2)} \otimes h_{\pi(3)}.$

\item The representation $\wedge^3 H$ of $\GSp$ has a natural subrepresentation $\wedge_0^3 H$, namely the kernel of the restriction to $\wedge^3 H$ of the  map $H^{\otimes 3}(2) \rightarrow H(1)$ sending $h_1 \otimes h_2 \otimes h_3$ to $\omega(h_1, h_2) h_3$.

\item Let $e_1, f_1, \ldots, e_g, f_g$ be a set of standard symplectic generators of $H$.
\end{itemize}
\end{notation}

\begin{prop}\label{P:wedge0}
Let $\ell > 3$. The class $\nu_\ell(X,b) \in H^1(G_K, H \otimes H \otimes H(2))$ in fact lies in the image of $H^1(G_K, \wedge^3_0 H)$.
\end{prop}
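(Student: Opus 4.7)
The plan is to refine the coefficient module for $\nu_\ell(X,b)$ along the chain $\wedge^3_0 H \subset \wedge^3 H \subset (H^1)^{\otimes 3}(2) \subset H^{\otimes 3}(2)$, building on the Künneth refinement already provided by Remark~\ref{R:wheredoesMDlive}.

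\textbf{Antisymmetry.} The modified diagonal $\Delta_b$ is visibly $S_3$-invariant as an algebraic cycle: $\Delta_{123}$ is fixed, while the remaining six summands split into the $S_3$-orbits $\{\Delta_{12,b},\Delta_{13,b},\Delta_{23,b}\}$ and $\{\Delta_{1,b},\Delta_{2,b},\Delta_{3,b}\}$, each appearing with a constant sign in $\Delta_b$. Consequently $\nu_\ell$ is $S_3$-invariant under the action on $H^1(G_K,(H^1)^{\otimes 3}(2))$ induced by permuting the factors of $X^3$. Under the Künneth identification, the Koszul sign rule for cup products of degree-one classes gives $\sigma^\ast(h_1 \otimes h_2 \otimes h_3) = \sgn(\sigma)\, h_{\sigma^{-1}(1)} \otimes h_{\sigma^{-1}(2)} \otimes h_{\sigma^{-1}(3)}$, so invariance under this twisted $S_3$-action is antisymmetry under the naive one. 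Since $\ell > 3$, the operator $\tfrac{1}{6}\operatorname{Alt}$ is a $G_K$-equivariant retraction of $\wedge^3 H \hookrightarrow H^{\otimes 3}$, and applying it to any cocycle representing $\nu_\ell$ produces a cohomologous cocycle with values in $\wedge^3 H$. Hence $\nu_\ell$ lies in the image of $H^1(G_K, \wedge^3 H(2))$.

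\textbf{Primitivity.} A direct symplectic-basis computation gives $c(e_1 \wedge e_2 \wedge f_2) = 2e_1$, so for $\ell$ odd and $g \geq 2$ the contraction $c|_{\wedge^3 H} \colon \wedge^3 H \to H(1)$ is surjective. The long exact sequence associated to the short exact sequence
$0 \to \wedge^3_0 H \to \wedge^3 H \xrightarrow{c} H(1) \to 0$
of $G_K$-modules then reduces the statement to showing $c_\ast(\nu_\ell) = 0$ in $H^1(G_K, H(1))$.

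The geometric heart of the proof is to identify $c_\ast(\nu_\ell)$ with the Kummer class of an explicit shadow. Apply Proposition~\ref{P:IntersectToCup} to the diagonal correspondence $Z = \Delta \subset X \times X$. By Remark~\ref{R:IgnoreH0H2}, only the $H^1 \otimes H^1$ Künneth component of $[\Delta]$ contributes to the shadow, and by construction this component is the symplectic form $\omega$; a direct unwinding of the map $\phi$ of Proposition~\ref{P:IntersectToCup} on this component reproduces exactly the contraction $c$. Thus $c_\ast(\nu_\ell)$ is the $\ell$-adic Kummer class of $\Sh(\Delta, b)$, which by~\eqref{E:ShId} equals $(2g-2)b - K_X$. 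Since $b$ is chosen to be a canonical base point, this divisor vanishes in $\Pic^0(X)$, and therefore $c_\ast(\nu_\ell) = 0$.

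The main obstacle will be this last identification: tracking Tate twists, Künneth projections, and the scalar factors produced by antisymmetrization and by extracting the $(H^1 \otimes H^1)$-component of $[\Delta]$, and checking that these factors are all units in $\Z_\ell$ — which is where the hypothesis $\ell > 3$ does real work (inverting $6$ for antisymmetrization and $2$ for surjectivity of $c|_{\wedge^3 H}$). Once these scalar issues are settled, the rest is a formal long-exact-sequence chase.
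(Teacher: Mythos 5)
Your proposal is correct and follows essentially the same route as the paper: first use $S_3$-invariance of $\Delta_b$ (with the Koszul sign turning this into antisymmetry) and $\ell>3$ to land in $H^1(G_K,\wedge^3 H)$, then observe that the contraction to $H^1(G_K,H(1))$ computes the shadow of the identity correspondence, which is $(2g-2)b-K_X=0$ for the canonical base point, and conclude by the exact sequence for $0\to\wedge^3_0H\to\wedge^3H\to H(1)\to 0$. Your explicit check that $c(e_1\wedge e_2\wedge f_2)=2e_1$ (so the contraction is surjective for $\ell$ odd) is a detail the paper leaves implicit, but the argument is the same.
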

\begin{proof}
The invariance of $\Delta_b$ under the action of $S_3$ on the three copies of $X$ implies that $\nu_\ell(X,b) \in H^1(G_K, H \otimes H \otimes H(2))^{S_3} \cong H^1(G_K, (H \otimes H \otimes H(2))^{S_3}) = H^1(G_K,\wedge^3 H)$ since $\ell > 3$. 

We now argue that in fact $\nu_\ell(X,b)$ lies in the image of $H^1(G_K, \wedge^3_0 H)$.
We have already computed that the class in $H^1(G_K,H(1))$ obtained by pairing the class of $\Delta_b$ in $H^1(G_K, H \otimes H \otimes H(2))$ with the class of the diagonal in $H^0(G_K, H \otimes H(1))$ is the shadow of the identity map, which is $(2g-2)b - K_X$ by \eqref{E:ShId}; by our choice of $b$, this is zero. Since $\ell > 3$, a class in the kernel of the map $H^1(G_K,\wedge^3 H) \rightarrow H^1(G_K,H(1))$ has a unique preimage in $H^1(G_K,\wedge^3_0 H)$. \qedhere
    \end{proof}
    
\begin{prop}\label{P:JhasBigImage} Suppose that $\nu_\ell(X,b)$ is not torsion and that $b$ is the canonical base point. Then the image of $J$ is a finite index subgroup of $\wedge^3_0 H$. In particular, there exists an element $\tau \in G_L$ such that $J(\tau) = \ell^\alpha e_1 \wedge e_2 \wedge e_3$ for some non-negative integer $\alpha$.
 \end{prop}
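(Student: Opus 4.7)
The plan is to show that the image of $J$ is a $G_K/G_L$-stable $\Z_\ell$-submodule of $\wedge^3_0 H$ whose $\mathbb{Q}_\ell$-span is the whole representation, forcing it to have finite index.

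First I would verify that $J$ is a homomorphism landing in $\wedge^3_0 H$. Since $G_L$ is the kernel of $G_K \to \Aut(H) \subset \GSp_{2g}(\Z_\ell)$ and the symplectic similitude character of this action agrees with the cyclotomic character $\chi$, we have $\chi|_{G_L} = 1$; consequently $G_L$ acts trivially on $H^{\otimes 3}(2)$, and $H^1(G_L, H^{\otimes 3}(2)) = \Hom_{\mathrm{cts}}(G_L, H^{\otimes 3}(2))$. Proposition~\ref{P:wedge0} lets us lift $\nu_\ell(X,b)$ to $H^1(G_K, \wedge^3_0 H)$, and since $\ell > 3$, $\wedge^3_0 H$ is a direct summand of $H^{\otimes 3}(2)$ (split off $\wedge^3 H$ via the $S_3$-symmetrizer, then use $\wedge^3 H \cong \wedge^3_0 H \oplus (H \wedge \omega)$). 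Hence $J$ takes values in $\wedge^3_0 H$.

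Next, a standard cocycle computation using $\nu_\ell(\sigma \sigma^{-1}) = 0$ shows $J(\sigma \tau \sigma^{-1}) = \sigma \cdot J(\tau)$ for $\sigma \in G_K$, $\tau \in G_L$, so the image $V := J(G_L) \subseteq \wedge^3_0 H$ is stable under the $G_K/G_L$-action. By hypothesis, $G_K/G_L$ has finite index in $\GSp_{2g}(\Z_\ell)$ and is therefore Zariski dense in $\GSp_{2g}$. For $g \geq 3$, the representation $\wedge^3_0 H \otimes \mathbb{Q}_\ell$ is irreducible (the fundamental representation of highest weight $\omega_3$ for $\Sp_{2g}$, twisted by the similitude character). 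So $V \otimes \mathbb{Q}_\ell$ is either $0$ or all of $\wedge^3_0 H \otimes \mathbb{Q}_\ell$.

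The main obstacle is ruling out $V \otimes \mathbb{Q}_\ell = 0$. Were this to hold then $V = 0$ (as $\wedge^3_0 H$ is torsion-free) and hence $J = 0$, so by inflation-restriction $\nu_\ell(X,b)$ would come from $H^1_{\mathrm{cts}}(G_K/G_L, \wedge^3_0 H)$. By Lazard's comparison theorem for $\ell$-adic Lie groups, the rational continuous cohomology of an open compact subgroup of $\GSp_{2g}(\Z_\ell)$ with values in a $\mathbb{Q}_\ell$-representation is computed by Lie algebra cohomology of $\mathfrak{gsp}_{2g}$; since $\wedge^3_0 H \otimes \mathbb{Q}_\ell$ is a nontrivial irreducible representation of this reductive Lie algebra, Whitehead's first lemma (applied to $\mathfrak{sp}_{2g}$, combined with a Hochschild--Serre step for the $1$-dimensional central torus, which acts by a nonzero scalar) gives $H^1(\mathfrak{gsp}_{2g}, \wedge^3_0 H \otimes \mathbb{Q}_\ell) = 0$. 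Hence $H^1_{\mathrm{cts}}(G_K/G_L, \wedge^3_0 H)$ is torsion, forcing $\nu_\ell(X,b)$ to be torsion and contradicting the hypothesis. Finally, once we know $V$ has finite index in $\wedge^3_0 H$, the quotient is a finite $\ell$-group killed by some $\ell^\alpha$. The element $e_1 \wedge e_2 \wedge e_3$ lies in $\wedge^3_0 H$ since $\omega(e_i, e_j) = 0$ for $i,j \in \{1,2,3\}$, so $\ell^\alpha(e_1 \wedge e_2 \wedge e_3) \in V$, and any $\tau \in G_L$ with $J(\tau)$ equal to this element satisfies the conclusion.
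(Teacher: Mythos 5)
Your proposal is correct, but it takes a genuinely different route from the paper's. The paper obtains the finite-index statement by quoting the implication (c)~$\Rightarrow$~(b) of \cite[Theorem 10.6]{hainmatsumoto} as a black box, whereas you reprove that implication directly: the image $V=J(G_L)$ is compact, hence a closed $\Z_\ell$-submodule of $\wedge^3_0H$, and the cocycle identity $J(\sigma\tau\sigma^{-1})=\sigma J(\tau)$ makes it stable under the image of $G_K$, which by the big-monodromy hypothesis is open and therefore Zariski dense in $\GSp_{2g}$; irreducibility of $V_{\omega_3}=\wedge^3_0H\otimes\mathbb{Q}_\ell$ for $g\ge 3$ then leaves only the alternatives ``finite index'' or ``zero,'' and the zero case is excluded because inflation--restriction plus the vanishing of the rational $H^1$ of the monodromy group would force $\nu_\ell(X,b)$ to be torsion. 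This is essentially the mechanism behind the cited implication in Hain--Matsumoto, so the two arguments are cousins, but yours is self-contained and makes explicit exactly where the large-monodromy hypothesis enters --- a hypothesis that both proofs use but that is not actually listed in the statement of the Proposition (it is a hypothesis of Theorem~\ref{T:shadowcheb}, and Remark~\ref{rem:need-monodromy} shows it cannot be dropped). Two minor points: you can dispense with Lazard's theorem in the last step, since a central scalar $\lambda$ in the image of $G_K$ acts on $H^{\otimes 3}(2)$ by $\lambda^{-1}$ (note the similitude character of $H=H^1$ is $\chi^{-1}$, not $\chi$), so Sah's lemma already shows that $H^1(G_K/G_L,\wedge^3_0H)$ is killed by a nonzero element of $\Z_\ell$; and the integral splitting $\wedge^3H\cong\wedge^3_0H\oplus(\omega\wedge H)$ needs $\ell$ prime to a constant of the form $g\pm1$ in addition to $\ell>3$, but you do not actually need that splitting, since the containment of the image of $J$ in $\wedge^3_0H$ already follows from Proposition~\ref{P:wedge0} together with functoriality of restriction.
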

 
\begin{proof} Since $J$ is a restriction of $\nu_\ell(X,b)$, its image lies in the subspace $\wedge^3_0 H$ by Proposition~\ref{P:wedge0}.
The fact that the image of $J$ is {\em large} in $\wedge^3_0 H$ is more difficult, and follows from the main theorem proved by Hain and Matsumoto in \cite{hainmatsumoto}.  More specifically:  $H^1(G_K, \wedge^3_0 H)$ is naturally identified with the Galois cohomology group $H^1(G_K, \wedge^3 H / H)$, which in the notation of \cite{hainmatsumoto} is called $H^1(G_K, L_{\Z_\ell}/H_{\Z_\ell})$.  Now the large monodromy hypothesis for the action of $G_K$ on $H$ and the hypothesis that the Ceresa class of $X$ has infinite order in $H^1(G_K, L_{\Z_\ell}/H_{\Z_\ell})$ guarantee that condition (c) of \cite[Theorem 10.6]{hainmatsumoto} is satisfied, which implies that condition (b) is satisfied, which implies that the image of $J$ has finite image.  Indeed, the index of the image of $J$ in $\wedge^3_0 H$ must be a power of $\ell$.  Note also that $e_1 \wedge e_2 \wedge e_3$ lies in $\wedge^3_0 H \subset \wedge^3 H$ because $\omega(e_i, e_j) = 0$ for all $i,j$. So some $\ell$-power multiple of $e_1 \wedge e_2 \wedge e_3$ must be contained in the image of $J$, as claimed.\qedhere
\end{proof}

\begin{lem}\label{L:ShPerturb}
Suppose $\sigma \in G_K$ and $\tau \in G_L$. Under the identification $H/(\tau \sigma \chi(\sigma) -1)H \cong H/(\sigma \chi(\sigma)-1)H$ induced by the identity map on $H$, we have
\[\Sh_{\tau \sigma} = J(\tau) \cup \Gamma_{\sigma} + \Sh_\sigma.\]
\end{lem}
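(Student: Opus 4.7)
The plan is a direct cocycle computation. The central observation is that $\tau \in G_L$ acts trivially not just on $H$ but on every Tate twist in sight. Indeed, the symplectic form on $H$ takes values in $\Z_\ell(1)$ and is $G_K$-equivariant, which forces the similitude character of the Galois representation on $H$ to equal the cyclotomic character $\chi$; since $\tau$ lies in the kernel of the action on $H$, we have $\chi(\tau)=1$, so $\tau$ also acts trivially on $H(1)$ and on $H^{\otimes 3}(2)$. In particular $J = \nu_\ell(X,b)|_{G_L}$ is genuinely a group homomorphism (not merely a twisted $1$-cocycle), and $\tau\sigma$ acts on $H$, respectively on $H(1)$, identically to $\sigma$. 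Hence the two subspaces $(\tau\sigma\chi(\sigma)-1)H$ and $(\sigma\chi(\sigma)-1)H$ of $H$ literally coincide, and the claimed identification of quotients is the identity map.

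Next I would apply the cocycle identity to a continuous $1$-cochain $\zeta\colon G_K \to H^{\otimes 3}(2)$ representing $\nu_\ell(X,b)$:
\[
\zeta(\tau\sigma) \;=\; \zeta(\tau) + \tau\cdot\zeta(\sigma) \;=\; J(\tau) + \zeta(\sigma),
\]
the second equality using $\tau$-triviality on the coefficients. Similarly $\Gamma_{\tau\sigma}=\Gamma_\sigma$, since $\Gamma_\bullet$ only depends on the action on $H$. Identifying $H^1(\langle g\rangle, M)\cong M/(g-1)M$ by evaluation at $g$, the shadow $\Sh_g$ is by definition the image of $\zeta(g)$ paired with $\Gamma_g$ under the $G_K$-equivariant bilinear contraction $H^{\otimes 3}(2)\otimes H^{\otimes 2}(1)\to H(1)$ given by the symplectic form on the first two factors. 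Bilinearity then gives
\[
\Sh_{\tau\sigma} \;=\; \bigl\langle J(\tau)+\zeta(\sigma),\,\Gamma_\sigma\bigr\rangle \;=\; J(\tau)\cup \Gamma_\sigma + \Sh_\sigma,
\]
which is the claimed identity.

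The only real subtlety is bookkeeping: one must verify that the $\cup$ appearing in $J(\tau)\cup\Gamma_\sigma$ refers to the same symplectic contraction used to define $\Sh$, and that both sides of the equation live naturally in the \emph{same} quotient of $H$ rather than merely isomorphic ones. Both facts follow at once from the observation that $\chi(\tau)=1$, so I do not anticipate any substantive obstacle beyond keeping the Tate twists straight.
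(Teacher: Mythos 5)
Your proof is correct and follows essentially the same route as the paper's: represent $\nu_\ell(X,b)$ by a cocycle $\zeta$, use the cocycle identity together with the triviality of the $\tau$-action on the coefficients to get $\zeta(\tau\sigma)=J(\tau)+\zeta(\sigma)$ and $\Gamma_{\tau\sigma}=\Gamma_\sigma$, and conclude by bilinearity of the contraction. Your explicit remark that $\chi(\tau)=1$ (forced by equivariance of the symplectic pairing) is a point the paper leaves implicit, but it is not a different argument.
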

\begin{proof}
If we represent $\nu_\ell(X,b)$ by a $1$-cocyle $\zeta: G_K \ra H \otimes H \otimes H$, then \begin{equation*}
    \zeta(\tau \sigma) = \zeta(\tau) \zeta(\sigma)^\tau = J(\tau) \zeta(\sigma)^\tau.
\end{equation*}
Since $\tau \in G_L$, we have $\zeta(\sigma)^\tau = \zeta(\sigma)$ and $\Gamma_{\tau \sigma} = \Gamma_{\sigma}$ and it follows that
\begin{equation*}
    \Sh_{\tau \sigma}  = (J(\tau) + \zeta(\sigma)^\tau) \cup \Gamma_{\tau \sigma} = (J(\tau) + \zeta(\sigma)) \cup \Gamma_{\sigma} = J(\tau) \cup \Gamma_{\sigma} + \Sh_\sigma. \qedhere
\end{equation*}
\end{proof}

\begin{prop}\label{P:Maketargetlarge} Suppose $\tau \in G_L$ is such that $J(\tau) = \ell^{\alpha} e_1 \wedge e_2 \wedge e_3$ for some integer $\alpha$.
If the image of $G_K$ acting on $H$ has finite index in $\GSp_{2g}(\mathbb{Z}_\ell)$, there exists an element $\sigma \in G_K$ such that 
\begin{enumerate}[\upshape (i)]
\item $e_1$ has infinite order in $H/(\sigma \chi(\sigma) -1)H$, and, \item $\Sh_{\tau \sigma} = \Sh_{\sigma} + ce_1$ for some nonzero constant $c$. 
\end{enumerate}
In particular, for such choice of $\tau$ and $\sigma$, if  $\Sh_{\sigma}$ has finite order, then $\Sh_{\tau \sigma}$ has infinite order.
\end{prop}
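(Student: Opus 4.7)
The plan is to reduce via Lemma~\ref{L:ShPerturb} to a pairing calculation, then make an explicit algebraic choice of $\sigma$ whose existence follows from the large image hypothesis. By that lemma, $\Sh_{\tau\sigma} - \Sh_\sigma = \ell^\alpha(e_1 \wedge e_2 \wedge e_3) \cup \Gamma_\sigma$ in $H/(\sigma\chi(\sigma)-1)H$, so it suffices to produce $\sigma \in G_K$ such that (i) $e_1$ has infinite order in this cokernel and (ii') the cup product is a nonzero $\Z_\ell$-multiple of $e_1$ modulo the image; taking $c$ to be this multiple, the final clause (that finite $\Sh_\sigma$ forces infinite $\Sh_{\tau\sigma}$) is immediate.

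First I would expand the cup product in the symplectic basis, using the identification of $\Gamma_\sigma \in (H \otimes H)(1)$ with the endomorphism $\sigma$ of $H$ via the symplectic form $\omega$. The calculation shows that the $e_1$-coefficient of $(e_1 \wedge e_2 \wedge e_3) \cup \Gamma_\sigma$ is the antisymmetric expression $\omega(e_3, \sigma e_2) - \omega(e_2, \sigma e_3)$, and the coefficients of $e_2, e_3$ involve the $f_1$-components of $\sigma e_1, \sigma e_2, \sigma e_3$. This suggests restricting $\sigma$ to the closed subgroup $V \subset \GSp_{2g}$ preserving the symplectic splitting $H = \Z_\ell e_1 \oplus \Z_\ell f_1 \oplus W$, where $W = \mathrm{span}(e_i, f_i : i \geq 2)$, and scaling the first two summands by $\chi(\sigma)^{-1}$ and $\chi(\sigma)^2$ respectively; for such block-diagonal $\sigma$ the $e_2$- and $e_3$-coefficients vanish identically and $\sigma\chi(\sigma)$ fixes $e_1$.

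Within $V$, I would exhibit an explicit element whose action on the four-dimensional symplectic block $\mathrm{span}(e_2, f_2, e_3, f_3)$ is $e_2 \mapsto \chi f_3$, $f_3 \mapsto e_2$, $e_3 \mapsto -\chi f_2$, $f_2 \mapsto -e_3$ (and the identity on $\mathrm{span}(e_i, f_i : i \geq 4)$), where $\chi = \chi(\sigma)$. A short check verifies that this is symplectic with similitude $\chi$ and gives $\omega(e_3, \sigma e_2) - \omega(e_2, \sigma e_3) = 2\chi$, establishing (ii'). For condition (i): since $\sigma\chi(\sigma)$ fixes $e_1$ and the image of $\sigma\chi(\sigma) - 1$ is contained in $\Z_\ell f_1 \oplus W$ by the block structure, the saturation of this image has trivial $e_1$-component, so $e_1$ has infinite order in the cokernel.

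To realize such a $\sigma$ in $G_K$, I invoke the hypothesis that the image $G \subseteq \GSp_{2g}(\Z_\ell)$ has finite index and is therefore open; consequently $G \cap V(\Z_\ell)$ is an open (finite-index) subgroup of the compact group $V(\Z_\ell)$ and carries positive Haar measure. The set of $\sigma \in V$ satisfying (ii') is Zariski-open in $V$ (cut out by the nonvanishing of the antisymmetric pairing $\omega(e_3, \sigma e_2) - \omega(e_2, \sigma e_3)$), non-empty by the explicit construction, and hence of full Haar measure in $V(\Z_\ell)$; its intersection with $G \cap V$ is therefore non-empty, producing the required $\sigma$. The main obstacle, and the reason for restricting to $V$ rather than working directly in $\GSp_{2g}$, is that condition (i) is not Zariski-open in $\GSp_{2g}$: it requires $1$ to be an eigenvalue of $\sigma\chi(\sigma)$, a codimension-$\geq 1$ condition that generic perturbations within $G$ would destroy. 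By instead restricting to $V$, where this eigenvalue property is built in by design, the existence argument reduces to meeting a Zariski-open set inside an open finite-index subgroup of $V(\Z_\ell)$, which is automatic.
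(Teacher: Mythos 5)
Your proposal is correct and follows essentially the same route as the paper: reduce via Lemma~\ref{L:ShPerturb} to computing $J(\tau)\cup\Gamma_\sigma$, choose $\sigma$ block-diagonal so that $\sigma\chi(\sigma)$ fixes $e_1$ (killing four of the six terms and making condition (i) automatic), and arrange that the surviving antisymmetric expression $\omega(e_2,\sigma e_3)-\omega(e_3,\sigma e_2)$ is nonzero. The only differences are cosmetic refinements --- you exhibit an explicit matrix on the $e_2,f_2,e_3,f_3$ block and justify the existence of such a $\sigma$ in the open image via Zariski density/Haar measure, where the paper takes $\chi(\sigma)=1$ and simply asserts that $B\in\Sp_4(\Z_\ell)$ may be chosen with unequal $(e_3,f_2)$ and $(e_2,f_3)$ entries.
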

\begin{proof}
By our hypothesis that the image of $G_K$ is open in $\GSp_{2g}(\Z_\ell)$, we can choose a $\sigma \in G_K$ which has $\chi(\sigma) = 1$, which fixes $e_1$ and $f_1$, and which preserves the space generated by $e_2,f_2,e_3,f_3$, acting on that space via some matrix $B \in \Sp_4(\Z_\ell)$.
For this choice of $\sigma$, the group $H / (\sigma - 1) H$ is infinite and the image of $e_1$ in this group has infinite order.

By Lemma~\ref{L:ShPerturb}, we know $\Sh_{\tau \sigma} = J(\tau) \cup \Gamma_{\sigma} + \Sh_{\sigma}$. The term $J(\tau) \cup \Gamma_{\sigma}$ can be computed explicitly. Recall that the pairing of $h_1 \otimes h_2 \otimes h_3$ with $\Gamma_\sigma$ is $\langle h_1, \sigma h_2 \rangle h_3$. 
So $J(\tau) \cup \Gamma_{\sigma}$ is the sum of six terms
\begin{equation*}
    e_1 \omega(e_2,\sigma e_3) + e_2 \omega(e_3,\sigma e_1) + e_3 \omega(e_1,\sigma e_2) - e_1\omega(e_3, \sigma e_2) - e_2\omega(e_1, \sigma e_3) - e_3\omega(e_2, \sigma e_1).
\end{equation*}
The second, third, fifth, and sixth terms are all zero.  The first term is $e_1\omega(e_2, B e_3)$ and the fourth is $-e_1\omega(e_3, B e_2)$. The big monodromy hypothesis on $G_K$ means we are free to choose $B$ to be a $4 \times 4$ generalized symplectic matrix whose $(e_3, f_2)$ and $(e_2, f_3)$ entries are not equal.  By doing so, we have guaranteed that $J(\tau) \cup \Gamma_{\sigma}$ is a nonzero multiple of $e_1$. 
\end{proof}

\begin{proof}[Proof of Theorem~\ref{T:shadowcheb}]
Proposition~\ref{P:JhasBigImage} and Proposition~\ref{P:Maketargetlarge} together show that there exists $\sigma \in G_K$ such that $\Sh_\sigma$ has infinite order. (These use the hypothesis that $G_K$ has big monodromy and the modified diagonal class has infinite order.) A continuity argument now finishes the proof, as we now explain. 

Let $\Sigma_k$ be the collection of $\sigma \in G_L$ such that $\ell^k \Sh_{\sigma} = 0$ as in Lemma~\ref{L:ContinuitySh}. The Chebotarev density theorem guarantees that the set of Frobenii is dense in $G_K$.  In particular, since $\Sh_{\sigma}$ has infinite order, it does not lie in $\Sigma_k$, so $\sigma$ has an open neighborhood disjoint from $\Sigma_k$ by Lemma~\ref{L:ContinuitySh}; this neighborhood must contain $\Frob_{\mathfrak{p}}$ for some prime $\mathfrak{p}$ of $K$; and the group-theoretic shadow of Frobenius for this prime is therefore not killed by $\ell^k$. Now $2g-2$ times the group-theoretic shadow of Frobenius $\Sh_{\Frob_p}$ agrees with the $\ell$-adic Kummer class of the Frobenius shadow $\Sh(\Frob_\mathfrak{p}, b) \in \Pic^0(X)(\F_p)$
(Remark~\ref{R:ShadowsAgree}).  Suppose $\ell^a N \Sh(\Frob_\mathfrak{p}, b) = 0$ for some integer $N$ prime to $\ell$.  Then $\ell^a$ kills the $\ell$-adic Kummer class of $\Sh(\Frob_\mathfrak{p}, b)$, which by the above discussion implies that $\ell^a$ kills $(2g-2)\Sh_{\Frob_p}$, so $a$ cannot be any less than $k - \ord_\ell (2g-2)$.  In particular, the multiplicative order of the point $\Sh(\Frob_\mathfrak{p}, b)$ can be made as $\ell$-divisible as we like by a suitable choice of $\mathfrak{p}$.
\end{proof}

\begin{rem}\label{rem:need-monodromy}
One might naturally ask whether Theorem~\ref{T:shadowcheb} holds without the assumption that $G_K$ acts with large monodromy on the homology of $X$.  The answer is negative; some hypothesis is needed.  Consider, for instance, a plane quartic $X/K$ of the form $Q(x^2, y^2, z^2)$ with $Q$ a nondegenerate quadratic form.  The curve $X$ naturally carries an action of $G = (\Z/2\Z)^2$ and $X/G$ has genus $0$.  Since the automorphisms are defined over $K$, the Frobenius at a prime $\mathfrak{p}$ of $\OO_K$ commutes with $G$, so the operator $\Gamma_{\Frob}$  in Definition~\ref{D:Grpthyshadow} lies in $(\wedge^2 H)^G$.  Since the Ceresa class is canonical, it too is preserved by $G$ and lies in $H^1(G_K, \wedge^3 H)^G = H^1(G_K, (\wedge^3 H)^G)$.  The shadow of Frobenius must thus also be fixed by $G$; but $H^1(G_K,H)^G = H^1(G_K,H^G)$ is trivial because $H^G$ is trivial.  We conclude that every Frobenius shadow of $X$ is zero; but we know from the example of the Fermat curve $x^4 + y^4 + z^4 = 0$ that the Ceresa class of $X$ does not need to be torsion (\cite{Bloch1}).  Indeed, in any situation where the curve has a finite group $G$ of $K$-rational isomorphisms and $X/G$ is of genus $0$, all shadows will vanish by the same argument.  

We can describe the phenomenon in the above paragraph in purely group-theoretic terms.  Write $\Pi$ for the quotient of the geometric \textup{\'{e}t}ale fundamental group of $C$ by the third term of its lower central series; it is not hard to see that the shadow of an element $\sigma \in G_K$ depends only on its image in $\Out(\Pi)$.
    
Call a subgroup $\Gamma$ of $\Out(\Pi)$ {\defi {shadowless}} if there is some nonzero integer $N$ such that the shadow of every $\gamma \in \Gamma$ is killed by multiplication by $N$.  If the restriction of the Morita cocycle to $\Gamma$ is already torsion, the subgroup $\Gamma$ is automatically shadowless; what we show in the proof of Theorem~\ref{T:shadowcheb} is that with the added assumption of big monodromy in $\GSp(H)$, the converse also holds.  But $\Out(\Pi)^G$ is a shadowless subgroup of $\Out(\Pi)$ which does not have trivial Morita cocycle.  It is an interesting group-theoretic question to understand what the maximal shadowless subgroups of $\Out(\Pi)$ are. 
    
This provides a group-theoretic account of the fact that some curves, like the Fermat quartic, have a non-torsion Ceresa class while having all Frobenius shadows trivial.  But there may be other shadows to try, arising from extra correspondences on various reductions of $X$, even when the Frobenius shadow does not suffice.  In \S~\ref{S:Bloch}, we will talk about some examples where this approach successfully certifies non-torsionness of $\nu(X,b)$ where Algorithm~\ref{alg:upperbound} does not.

\begin{question}
Is it possible to have a curve $X$ over a number field $K$ and an integer $M$ such that, for every prime $\mathfrak{p}$ of good reduction and every correspondence $Z$ on $X_{\FF_\mathfrak{p}}$, the shadow $\Sh(Z,p) \in \Pic^0(X)(\FF_\mathfrak{p})$ is killed by $M$, but $\nu(X,b)$ is not torsion?
\end{question}

\end{rem}

\section{Modified diagonal cycles of semidiagonal quartics}

\begin{defin} Following \cite{Corn}, we say that a plane curve of degree $4$ is {\em semidiagonal} if it is
defined by an equation of the form $q(x^2,y^2,z^2) = 0$, where $q$ is a nonsingular quadratic form.
\end{defin}

In this section, we study the modified diagonal cycle of the family of semidiagonal quartic curves. This family  includes the Fermat quartic, and hence its very general member has infinite order modified diagonal cycle (Lemma~\ref{L:VeryGeneral}).  We show that although the Frobenius shadow of every member in this family has order bounded by $4$ (Corollary~\ref{R:trivFrobSD}), there is an alternate shadow construction, which we denote the ``Bloch shadow", which can sometimes be used to certify nontriviality of the modified diagonal cycle (\S~\ref{S:Bloch}).

\subsection{Curves with Frobenius shadows of bounded order}

\begin{lem}\label{lem:shadow-quotient}
Let $C$ be a curve over a finite field $\FF$ with a group $G$ of automorphisms (each defined over $\FF$) such that the exponent of $\Pic^0(C/G)(\FF)$ is $e$, and let $s$ be the LCM of the orders of ($\overline{\FF}$-)point stabilizers of elements of $G$.  Let $d$ be the order of the Frobenius shadow $(2g-2)C(\FF) - s|C(\FF)|K_C]$ of $C$.  Then $d \divides es$.
\end{lem}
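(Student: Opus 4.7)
The plan is to produce an \(\FF\)-rational class \(D \in \Pic^0(C/G)(\FF)\) such that \(\pi^* D = s\,\Sh_F\), where \(\pi\colon C \to Y := C/G\) is the quotient map and \(\Sh_F := (2g-2)C(\FF) - |C(\FF)|K_C\) is the Frobenius shadow of \(C\). Granting this, the hypothesis that \(e\) is the exponent of \(\Pic^0(Y)(\FF)\) gives \(eD = 0\), so \(es\,\Sh_F = \pi^*(eD) = 0\), forcing \(d \mid es\).

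The key observation is that \(s\) is exactly the denominator-clearing constant needed to realize both \(C(\FF)\) and the ramification divisor of \(\pi\) as \(\FF\)-rational pullbacks from \(Y\). For any \(\bar y \in Y(\FF)\), the fiber \(\pi^{-1}(\bar y)\) is a single \(G\)-orbit; since \(G\) is defined over \(\FF\) it commutes with Frobenius, so the entire fiber is \(\FF\)-rational whenever one of its points is. Writing \(n_{\bar y}\) for the common stabilizer order along the fiber, we have \(\pi^*(\bar y) = n_{\bar y}\sum_{x \in \pi^{-1}(\bar y)} x\), whence in \(\Div(C)\otimes\mathbb{Q}\),
\[
C(\FF) \;=\; \sum_{\bar y \in \pi(C(\FF))} \frac{1}{n_{\bar y}}\, \pi^*(\bar y).
\]
Because each \(n_{\bar y}\) divides \(s\), multiplying by \(s\) clears the denominators and yields an integral \(\FF\)-rational identity \(s \cdot C(\FF) = \pi^* E_1\), with \(E_1 := \sum_{\bar y \in \pi(C(\FF))} (s/n_{\bar y})\,\bar y \in \Div(Y)(\FF)\).

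For the canonical class, Riemann--Hurwitz gives \(K_C = \pi^* K_Y + R\) where \(R = \sum_x(e_x - 1)x\) is the ramification divisor. The same fiber-by-fiber bookkeeping, using that \(e_x\) is constant and equal to \(n_{\bar y}\) along the fiber over \(\bar y\), expresses \(R = \sum_{\bar y}(1 - 1/e_{\bar y})\pi^*(\bar y)\) rationally, so again \(sR = \pi^* R'\) for an explicit \(\FF\)-rational divisor \(R'\) on \(Y\); hence \(sK_C = \pi^*(sK_Y + R')\) in \(\Pic(C)(\FF)\). Combining the two computations,
\[
s\,\Sh_F \;=\; (2g-2)\pi^* E_1 \;-\; |C(\FF)|\,\pi^*(sK_Y + R') \;=\; \pi^* D
\]
for the explicit class \(D \in \Pic(Y)(\FF)\) assembled from \(E_1\), \(K_Y\), and \(R'\). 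Since \(\pi^*\) multiplies degrees by \(|G|\) and \(s\,\Sh_F\) has degree \(0\), in fact \(D \in \Pic^0(Y)(\FF)\), and the strategy in the first paragraph concludes the proof.

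The single wrinkle to watch is tameness: the identification \(e_x = |\Stab_G(x)|\) used above requires the action of \(G\) to be tame at \(x\), so if the characteristic of \(\FF\) divides some stabilizer order the ramification step needs to be revisited. Otherwise the argument is just bookkeeping, and the conceptual content is exactly the observation that every local stabilizer order divides \(s\) --- this is what sharpens the naive bound coming from \(\pi^*\pi_* = |G|\) into the bound \(es\) claimed by the lemma.
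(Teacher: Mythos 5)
Your proof is correct and is essentially the paper's own argument: both show that $s\,C(\FF)$ and $s\,K_C$ are pullbacks from $\Pic(C/G)(\FF)$ (via the orbit decomposition $\pi^*(\bar y)=n_{\bar y}\sum_{x\in\pi^{-1}(\bar y)}x$ and Riemann--Hurwitz), so that $s$ times the shadow is a degree-zero class pulled back from $\Pic^0(C/G)(\FF)$ and hence killed by $e$. Your tameness caveat is fair but harmless, and it applies equally to the paper's proof: in the wild case one replaces $\sum_x(e_x-1)x$ by the different, which is still $G$-invariant with orbit-constant coefficients, so multiplying by $s$ still clears the denominators $e_{\bar y}\mid s$ and the conclusion is unchanged.
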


\begin{proof}
    Denote by $\pi$ the projection map from $C$ to $C/G$.  Enumerate the points of $C(\FF)$ as $p_1, \dots, p_n$, and group them into $G$-orbits.  Note that $|\Stab_G(p)| \sum_{q \in Gp} q$ is the pullback of the degree-$1$ divisor $\pi(p) \in \Pic(C/G)$.  It follows that $s C(\FF) := s \sum_i [p_i]$ is pulled back from $\Pic(C/G)$.  On the other hand, Riemann-Hurwitz tells us that
    $$
    K_C = \pi^* K_{C/G} + \sum_{p \in C(\overline{\FF})} (|\Stab_G(p)|-1)p.
    $$
    so $sK_C$ is also a pullback from $\Pic(C/G)(\FF)$.  So $s[(2g-2)C(\FF) - s|C(\FF)|K_C]$ is a pullback from $\Pic^0(C/G)(\FF)$, and therefore has order dividing $e$.
\end{proof}

\begin{cor}\label{cor:shadow-trivial} If $C/G$ is rational, then $d|s$. (See also Remark~\ref{rem:need-monodromy}.)
\end{cor}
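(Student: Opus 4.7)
The plan is to read Corollary~\ref{cor:shadow-trivial} as the specialization of Lemma~\ref{lem:shadow-quotient} to the case in which the Picard contribution $e$ trivializes. Concretely, I would argue that when $C/G$ is a rational curve, the group $\Pic^0(C/G)(\FF)$ vanishes, so the invariant $e$ from the statement of Lemma~\ref{lem:shadow-quotient} can be taken to be $1$, and the divisibility $d \mid es$ collapses to $d \mid s$.

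The only point that requires a small sanity check is the vanishing of $\Pic^0(C/G)(\FF)$. Since $C$ is nice and the quotient by a finite group of $\FF$-automorphisms exists as a nice curve, $C/G$ is a smooth projective geometrically integral curve; the hypothesis that it is rational means it has genus $0$. Over a finite field, any smooth projective genus-$0$ curve has an $\FF$-point (e.g.\ by the Hasse--Weil bound, or simply because the class number of a genus-$0$ curve over $\FF$ is $1$), so $C/G \cong \pp^1_\FF$ and $\Pic^0(C/G)(\FF)$ is the trivial group, whose exponent is $1$.

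With $e=1$ in hand, Lemma~\ref{lem:shadow-quotient} gives $d \mid s$ immediately, completing the proof. There is no real obstacle: the corollary is essentially just isolating the ramification contribution to the divisibility once the Jacobian of the quotient is out of the picture. The conceptual payoff, as flagged by the parenthetical pointer to Remark~\ref{rem:need-monodromy}, is that in situations such as semidiagonal quartics (where $G$ is small and all point stabilizers are small) the bound $d \mid s$ forces the Frobenius shadow to have very small order uniformly in $\FF$, explaining why Algorithm~\ref{alg:upperbound} in its Frobenius-shadow form cannot detect non-torsion of $\nu(X,b)$ for such curves and motivating the ``Bloch shadow" construction developed in \S~\ref{S:Bloch}.
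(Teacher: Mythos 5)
Your proposal is correct and is exactly the argument the paper intends: the corollary is the immediate specialization of Lemma~\ref{lem:shadow-quotient} to the case where $C/G$ is rational, so that $\Pic^0(C/G)(\FF)$ is trivial, $e=1$, and $d \mid es$ becomes $d \mid s$. The paper gives no separate proof because this is the whole content, and your sanity check that a genus-$0$ curve over a finite field has trivial $\Pic^0$ is the only point needing verification.
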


\begin{cor}\label{R:trivFrobSD}
The Frobenius shadow of any semidiagonal quartic curve is $4$-torsion.
\end{cor}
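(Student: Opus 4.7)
The plan is to apply Corollary \ref{cor:shadow-trivial} to the natural group of sign-change automorphisms carried by any semidiagonal quartic. For a curve $C$ cut out by $q(x^2,y^2,z^2)=0$ with $q$ nonsingular, the transformations $(x,y,z) \mapsto (\varepsilon_1 x, \varepsilon_2 y, \varepsilon_3 z)$ with $\varepsilon_i \in \{\pm 1\}$ clearly preserve $C$. At the level of $\mathbb{P}^2$ (and hence of $C$), the projective center $(-1,-1,-1)$ acts trivially, so we obtain a faithful action of $G \colonequals (\mathbb{Z}/2\mathbb{Z})^2$ on $C$. These automorphisms are all defined over the prime field, in particular over the base field $\mathbb{F}$ whenever $\operatorname{char}\mathbb{F} \neq 2$, which is automatic at primes of good reduction of a semidiagonal quartic.

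Next I would identify the quotient. The map $\pi \colon C \to \mathbb{P}^2$ defined by $(x,y,z)\mapsto(x^2,y^2,z^2)$ is $G$-invariant, has generic degree $4 = |G|$, and its image is the conic cut out by $q(u,v,w)=0$. Since $q$ is nondegenerate, this conic is smooth and therefore rational, so $C/G$ has genus zero.

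With this setup, Corollary \ref{cor:shadow-trivial} applies and gives that the order $d$ of the Frobenius shadow of $C$ divides $s$, the least common multiple of the orders of point stabilizers for the action of $G$ on $C(\overline{\mathbb{F}})$. Every subgroup of $G = (\mathbb{Z}/2\mathbb{Z})^2$ has order $1$, $2$, or $4$, so $s \mid 4$, and hence $d \mid 4$, which is the desired conclusion.

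There is essentially no significant obstacle here; the content of the argument is the structural observation that a semidiagonal quartic is a $(\mathbb{Z}/2\mathbb{Z})^2$-cover of a rational curve, after which Corollary \ref{cor:shadow-trivial} does the work. The only minor point to verify is that the sign-change automorphisms are defined over the finite field of reduction, which follows from $\operatorname{char}\mathbb{F} \ne 2$ at good primes.
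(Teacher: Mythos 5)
Your argument is correct and is essentially the paper's own proof: the paper likewise invokes Corollary \ref{cor:shadow-trivial} with $G$ the sign-change group acting on $C$ with rational quotient and $s=4$. You have simply spelled out the details (identifying $G\cong(\mathbb{Z}/2\mathbb{Z})^2$, the quotient conic, and the bound on stabilizer orders) that the paper leaves implicit.
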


\begin{proof}
Observe that all semidiagonal quartics satisfy the hypothesis of Corollary \ref{cor:shadow-trivial} with $s=4, e=1$ since $C/G \cong \mathbb{P}^1$.
\end{proof}
 In particular, Algorithm~\ref{alg:upperbound} cannot be used to certify nontriviality of the modified diagonal cycle of these curves. 
 
 \subsection{Modified diagonal cycle of a very general semidiagonal quartic via twists}
 We prove that the Fermat quartic, and hence a very general semidiagonal quartic, has nontorsion modified diagonal cycle, by showing that a certain twist of the Fermat quartic has infinite order modified diagonal cycle.

\begin{rem} The stronger statement of the modified diagonal cycle of the Fermat quartic being nontorsion in the Griffiths group (i.e., modulo algebraic equivalence) was proved by Bloch 
\cite[Theorem 4.1]{Bloch1}, extending Harris's result \cite[Section 3]{harris} of nontriviality in the Griffiths group. Their methods use additional structure special to the Fermat quartic such as having period lattice with entries in $\mathbb{Z}[i]$, and a decomposition of the Galois representation. In contrast, our method of proof, although only certifying nontriviality in the Chow group, can be applied in many situations where the approaches of Harris and Bloch cannot be easily adapted.
\end{rem}

\begin{prop}\label{P:Certwist}
If a $\mathbb{Q}$-form $X'$ of $X$ (i.e., a curve that is $\overline{\mathbb{Q}}$-isomorphic to $X$) has $\nu(X',b)$ nontorsion, then $\nu(X,b)$ is also nontorsion, whence the modified diagonal cycle $\Delta_b$ of $X$ is nontorsion in the Chow group.
\end{prop}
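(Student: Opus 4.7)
The plan is to note that the property ``$\nu(X,b)$ is (non)torsion'' is a geometric invariant of $X$: by Remark~\ref{R:CerTorGeom} it is preserved under finite base extension, and by functoriality of the Bloch cycle class map it is preserved under $\overline{\mathbb{Q}}$-isomorphism of curves. Since $X$ and $X'$ are $\overline{\mathbb{Q}}$-isomorphic by hypothesis, the invariant must agree for both.

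Concretely, I would fix a finite Galois extension $L/\mathbb{Q}$ over which $X$ and $X'$ become isomorphic, and pick an $L$-isomorphism $\varphi \colon X'_L \xrightarrow{\sim} X_L$. Given the hypothesis that $\nu(X', b)$ is nontorsion, Remark~\ref{R:CerTorGeom} first gives that $\nu(X'_L, b_L)$ is nontorsion. Pushing forward along $\varphi$, functoriality of the cycle class map \eqref{E:Bloch} under the isomorphism $\varphi \times \varphi \times \varphi$ identifies $\nu(X'_L, b_L)$ with $\nu(X_L, \varphi(b_L))$, so the latter is nontorsion as well. A second application of Remark~\ref{R:CerTorGeom} gives that $\nu(X, \varphi(b_L))$ is nontorsion, viewing $\varphi(b_L)$ as a degree-$1$ divisor on $X$ defined over $L$. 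Since torsion of $\nu(X, \cdot)$ at any canonical base point is independent of the choice of canonical base point (by the rigidification discussed before Remark~\ref{R:canbp}), the same nontorsion conclusion transfers to $\nu(X, b)$ for any canonical $b$ on $X$---matching the $b$ appearing in the statement.

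The ``whence'' clause is then immediate: nontorsion of $\nu(X, b)$ in \textup{\'{e}t}ale cohomology implies nontorsion of $\Delta_b$ in $\CH^2(X \times X \times X)$ because the cycle class map $\Sigma$ of \eqref{E:Bloch} is a group homomorphism, so a torsion cycle could not produce a nontorsion class. There is no genuine obstacle in the argument; the only book-keeping is tracking how base points on $X$ and $X'$ transport under $\varphi$, which is absorbed into the freedom to change base field (Remark~\ref{R:CerTorGeom}) and to switch between canonical base points without affecting torsionness.
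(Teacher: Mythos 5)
Your proposal is correct and follows the same route as the paper, whose entire proof is the single line ``This follows from Remark~\ref{R:CerTorGeom}''; you have simply unpacked the details (choice of $L$, transport of the base point along $\varphi$, and the base-point independence from the rigidification discussion) that the paper leaves implicit.
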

\begin{proof} This follows from Remark~\ref{R:CerTorGeom}.
\end{proof}

\begin{cor}\label{cor:fermat-non-torsion}
The Fermat quartic has nontorsion modified diagonal cycle in the Chow group.    
\end{cor}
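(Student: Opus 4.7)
The plan is to combine Proposition~\ref{P:Certwist} with an explicit application of Algorithm~\ref{alg:upperbound}. We cannot run Algorithm~\ref{alg:upperbound} on the Fermat quartic $X: x^4+y^4+z^4=0$ itself, because $X$ is semidiagonal and Corollary~\ref{R:trivFrobSD} forces every Frobenius shadow of $X$ to be $4$-torsion, so the lower bounds $M_{\mathfrak{p}}$ will never exceed the upper bounds $N_{\mathfrak{p}}$ obtained from weights. However, Proposition~\ref{P:Certwist} tells us that we do not need to work with $X$ itself: it suffices to certify non-torsionness for some $\mathbb{Q}$-form $X'$ of $X$.

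First, I would exhibit an explicit twist $X'$ of the Fermat quartic over $\mathbb{Q}$ for which the $(\mathbb{Z}/2\mathbb{Z})^2$-action making $X$ semidiagonal is \emph{not} defined over $\mathbb{Q}$. Such a twist exists because the geometric automorphism group of the Fermat quartic is much larger than the semidiagonal group, and one can twist by a cocycle that fails to normalize the coordinate-sign involutions; after choosing such an $X'$, the Frobenius at a typical good prime $\mathfrak{p}$ no longer commutes with the semidiagonal $G$-action on the reduction, so the group-theoretic obstruction identified in Remark~\ref{rem:need-monodromy} and Corollary~\ref{R:trivFrobSD} disappears, and there is no a priori reason for the Frobenius shadows of $X'$ to be of bounded order.

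Second, I would run Algorithm~\ref{alg:upperbound} on $X'$: choose a small finite set of primes $\mathcal{T}$ of good reduction, compute for each $\mathfrak{p}\in\mathcal{T}$ the integer $N_{\mathfrak{p}}=\det(\Frob_{\mathfrak{p}}-I)\mid_{M}$ acting on $H^3_{\mathrm{\acute{e}t}}(X'^{\,3}_{\overline{\mathbb{Q}}},\mathbb{Z}_\ell(2))$ (which by the K\"unneth formula can be read off from the characteristic polynomial of $\Frob_{\mathfrak{p}}$ on $H^1_{\mathrm{\acute{e}t}}(X'_{\overline{\mathbb{Q}}},\mathbb{Z}_\ell)$), and compute the order $M_{\mathfrak{p}}$ of the Frobenius shadow $(2g-2)X'(\mathbb{F}_{\mathfrak{p}})-|X'(\mathbb{F}_{\mathfrak{p}})|K_{X'}$ in $\Pic^0(X'_{\mathbb{F}_{\mathfrak{p}}})$, then check the divisibility condition $\widetilde{M}\nmid\widetilde{N}$ from the algorithm. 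Success of this divisibility check, which amounts to a finite computation that can be performed in Magma using the scripts accompanying the paper, certifies by Proposition~\ref{P:CAlgo} that $\nu(X',b)$ is nontorsion. Proposition~\ref{P:Certwist} then immediately transfers the conclusion to $X$, proving the corollary.

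The main obstacle is locating a twist $X'$ that is both non-semidiagonal over $\mathbb{Q}$ \emph{and} has manageable arithmetic so that the orders $M_{\mathfrak{p}}$ grow fast enough relative to the weight-theoretic upper bounds $N_{\mathfrak{p}}$ for the algorithm to succeed on a small prime set. Once such an $X'$ is in hand, the rest is mechanical; we rely on the computer algebra implementation to verify the required divisibility failure for the specific twist chosen.
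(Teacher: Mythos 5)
Your proposal follows essentially the same route as the paper's proof: the paper likewise invokes Proposition~\ref{P:Certwist} and runs Algorithm~\ref{alg:upperbound} on an explicit $\mathbb{Q}$-form of the Fermat quartic on which the semidiagonal involutions are no longer individually Galois-fixed, namely $\sum_{i=0}^{2}(\ell^{\sigma^i})^4$ for a linear form $\ell$ over the cubic field $\mathbb{Q}(\zeta_7)^+$, obtaining $M_p=2368$ and $N_p=12$ for $p\in\{5,11\}$. The only ingredient your write-up leaves unspecified is this concrete choice of twist together with the resulting numerical certificate, which the paper supplies (and notes that most choices of cubic field and linear form work).
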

\begin{proof}
We will prove that another curve $\bar{\mathbb{Q}}$-isomorphic to the Fermat quartic has nontorsion modified diagonal cycle.  
In particular, let $K$ be the cubic field $\mathbb{Q}(\zeta_7)^+$, with automorphisms $1, \sigma, \sigma^2$, and let $\ell$ the linear form 
$(-1+\alpha + \alpha^2) x + (1 + \alpha) y + (-1 - \alpha^2) z$ over $K$, where 
$\alpha = \zeta_7 + \zeta_7^{-1}$.  (The choices of $K, \ell$ are arbitrary; it appears that most choices of cubic field and linear form allow
the result to be proved.)  Then 
$\sum_{i=0}^2 (\ell^{\sigma^i})^4$ is a $\mathbb{Q}$-form of the Fermat quartic; considering the
shadows and bound for $p \in \{5,11\}$ we get $M_p = 2368$ and $N_p = 12$. It follows that its modified diagonal class is not torsion.
\end{proof}

\begin{lem}\label{L:VeryGeneral}
Let $C$ be the generic semidiagonal quartic over $\mathbb{Q}$, i.e., the curve defined by
$q(x^2,y^2,z^2) = 0$ where $q$ is the generic quadratic form in three variables over $\mathbb{Q}$
whose coefficients are independent transcendentals. Then the modified diagonal cycle of $C$ is nontorsion in the Chow group.
\end{lem}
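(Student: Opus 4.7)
The plan is a specialization argument anchored on the Fermat quartic. The Fermat quartic $x^4 + y^4 + z^4 = 0$ is itself a semidiagonal quartic (it corresponds to $q = X^2 + Y^2 + Z^2$), and by Corollary~\ref{cor:fermat-non-torsion} its modified diagonal cycle is nontorsion. I would view the generic semidiagonal quartic $C$ as the generic fiber of the universal family $\mathcal{C} \to S$ over the parameter space $S \subset \mathbb{A}^6_{\mathbb{Q}}$ of nonsingular quadratic forms $q$ for which $q(x^2, y^2, z^2) = 0$ is smooth; the Fermat quartic is then the fiber over a specific $\mathbb{Q}$-point $s_0 \in S(\mathbb{Q})$, and $C$ is the fiber over the generic point $\eta \in S$ with residue field $K = \mathbb{Q}(S) = \mathbb{Q}(a_1, \ldots, a_6)$.

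I would argue by contraposition. Suppose $\Delta_b$ on $C$ is $N$-torsion in $\CH^2(C^{\times 3})$ for some integer $N$ and some canonical base point $b$ (defined over a finite extension of $K$; cf.\ Remark~\ref{R:canbp}). A standard spreading-out of the witnessing rational equivalence gives: after passing to an étale cover $\tilde S \to S$ trivializing the choice of canonical base point, there is a nonempty open $\tilde V \subseteq \tilde S$ over which a relative rational equivalence $N \Delta_b \sim 0$ holds in the Chow group of the relative triple fiber product $\mathcal{C}^{\times_{\tilde S} 3}_{\tilde V}$. Specialization at any point of $\tilde V$ then forces the MDC of that fiber to be $N$-torsion; by arranging (as one may, using the flexibility in the choice of spreading) the Fermat point $s_0$ to lie in the image of $\tilde V$ in $S$, one would conclude $N \Delta_{\mathrm{Fermat}} = 0$, contradicting Corollary~\ref{cor:fermat-non-torsion}.

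The hard part will be making the spreading and specialization of Chow cycles rigorous—in particular, ensuring that the Fermat point can be placed in the open of validity and that the canonical base point descends well through the étale base change and the specialization. One route that sidesteps some of this friction is to run the entire argument at the level of the $\ell$-adic class $\nu_\ell(X, b)$: if the generic $\nu_\ell$ were torsion, a specialization compatibility analogous to Proposition~\ref{P:reduce}, adapted to a smooth family over $S$ rather than to a mod-$\mathfrak{p}$ reduction of a number field, would force $\nu_\ell$ at the Fermat fiber to be torsion as well. The proof of Corollary~\ref{cor:fermat-non-torsion} actually certifies that $\nu(\mathrm{Fermat}, b)$ is nontorsion, so this stronger version of the contradiction is available and the generic $\nu_\ell(C,b)$, and hence by Lemma~\ref{L:Key} the generic modified diagonal cycle $\Delta_b$, is nontorsion.
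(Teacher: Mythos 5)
Your proposal is essentially the paper's own proof: the paper also deduces the lemma by specializing from the generic fiber of the family of semidiagonal quartics to the Fermat quartic and invoking Corollary~\ref{cor:fermat-non-torsion}. The one point you flag as delicate---placing the Fermat point inside the open locus where the spread-out rational equivalence holds---is not actually an obstruction, since the specialization homomorphism on Chow groups (Fulton) is defined at \emph{every} point of the irreducible base, not just on a dense open; this is exactly the form in which the paper states the specialization principle, and your $\ell$-adic fallback is a second valid way around it.
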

\begin{proof}
 Let $C$ be a family of curves over an 
irreducible base.  If the class of the Ceresa cycle of the fibre of $C$ over the generic
point is annihilated by $n$ (in the Chow group, the Griffiths group, or any other 
reasonable ambient group), then the same is true for every smooth fibre of $C$.  
Combining this with Corollary \ref{cor:fermat-non-torsion}, we see that the generic
semidiagonal quartic has nontorsion modified diagonal cycle in the Chow group, and therefore that
the locus of semidiagonal quartics with torsion modified diagonal cycle is a countable union of
subvarieties of the moduli space.      
\end{proof}
\begin{rem}\label{rem:expect-not-torsion}We certainly expect that almost all semidiagonal 
quartics over a fixed number field,
when counted in any reasonable order, will have nontorsion modified diagonal cycle,
but we do not know how to prove this. Indeed, it is even plausible that there is an open dense subvariety $U$ in the space of semidiagonal quartics such that for any number field $K$ there are only finitely many quartics in $U(K)$ whose modified diagonal cycle is torsion.  When the space of curves under consideration is compact, the analogous statement follows from a Northcott property proved by Zhang~\cite[Theorem 1.3.5]{ZhangGS} for compact subvarieties of $\mathcal{M}_g$ and generalized to arbitrary subvarieties by Gao and Zhang in \cite{GaoZhangNorthcott}.
More examples and data on this question will be presented in
Example \ref{ex:use-bloch-shadow} and Section \ref{sec:census}. 
 Note also the recent results of \cite{laga-shnidmanbiellipticpicard}, which proves that in a certain family of bielliptic Picard curves $C_t$ over $K$, the modified diagonal cycle is torsion if and only if a corresponding point $P_t$ on a fixed elliptic curve $E/K$ is torsion; the finiteness of $t \in K$ such that the modified diagonal cycle on $C_t$ is torsion then follows immediately. 
\end{rem}

\begin{lem}\label{L:formSD} Let $C$ be a semidiagonal quartic over a field $K$ of characteristic not equal to $2$
such that $\Aut_{\bar K} C_{\bar K}$ is of order
$4$.  Then every form of $C$ is itself semidiagonal. 
\end{lem}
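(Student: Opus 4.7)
The plan is to exploit the abelianness of $\Aut(C_{\overline{K}}) = (\mathbb{Z}/2)^2$ to show that on every form $C'$ of $C$ the three nontrivial involutions are individually $K$-rational; then to simultaneously diagonalize their action on holomorphic differentials so as to put $C'$ into semidiagonal coordinates; and finally to invoke a short parity argument on the quartic equation.

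First I would identify $\Aut(C_{\overline{K}})$ explicitly. For $C$ defined by $q(x^2, y^2, z^2) = 0$, the three coordinate sign-changes $\sigma_x, \sigma_y, \sigma_z$ form an order-$4$ subgroup $G$ of $K$-rational automorphisms, and by the hypothesis $|\Aut(C_{\overline{K}})| = 4$ this exhausts the automorphism group. In particular $G_K$ acts trivially on it. For any form $C'$ of $C$, pick an $\overline{K}$-isomorphism $\phi \colon C \to C'$ with twisting cocycle $c_\sigma = \phi^{-1}\phi^\sigma \in G$. A direct computation shows that under the identification $\Aut(C'_{\overline{K}}) \cong G$ by $\alpha \mapsto \phi^{-1}\alpha\phi$ (which is canonical because $G$ is abelian), the intrinsic $G_K$-action on $\Aut(C'_{\overline{K}})$ becomes $\sigma \cdot g = c_\sigma g^\sigma c_\sigma^{-1}$; the conjugation is trivial because $G$ is abelian, so this action is still trivial. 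Hence each of the three nontrivial involutions $\tau_1, \tau_2, \tau_3$ of $C'$ is defined over $K$.

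Next I would canonically embed $C'$ and diagonalize. Since $C'_{\overline{K}}$ is a smooth plane quartic, $C'$ is non-hyperelliptic of genus $3$ and admits a canonical $K$-embedding into $\mathbb{P}^2_K = \mathbb{P}(H^0(C', \Omega^1)^*)$. The three commuting $K$-rational involutions $\tau_i$ act $K$-linearly on $H^0(C', \Omega^1)$. Because the characters of $G$ all take values in $\{\pm 1\} \subset K$, this $3$-dimensional representation decomposes over $K$ as a direct sum of characters; comparison with the standard model of $C$ shows that the three summands are precisely the three distinct nontrivial characters of $G$, each with multiplicity one (the trivial character is absent because $C/G$ has genus zero). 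Choose a $K$-basis $(v_1, v_2, v_3)$ of $H^0(C', \Omega^1)$ with each $v_i$ spanning one of the three character lines, and let $(x, y, z)$ be the dual coordinates on $\mathbb{P}^2_K$; then each $\tau_i$ acts projectively on $\mathbb{P}^2_K$ as one of the three coordinate sign-changes.

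The defining equation $F \in K[x, y, z]$ of $C'$, a degree-$4$ homogeneous form, must then be invariant up to scalar under each $\tau_i$, which forces the exponents $(a, b, c)$ of every monomial $x^a y^b z^c$ occurring in $F$ to have uniform parity in each slot. With $a + b + c = 4$ the only options are that $a, b, c$ are all even (giving $F = q(x^2, y^2, z^2)$) or that exactly one of them is even and the other two are odd, in which case $F$ has a factor of one of $xy$, $yz$, $xz$ times a diagonal quadric in $x, y, z$, so $C'$ is reducible. Smoothness of $C'$ eliminates the reducible cases, and similarly forces $q$ to be nonsingular (otherwise $q(x^2, y^2, z^2)$ factors as a product of two diagonal quadrics). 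The subtle step in this plan is the twist-and-descend argument for the involutions: the abelianness of $\Aut(C_{\overline{K}})$ is essential, since without it the Galois action on $\Aut(C'_{\overline{K}})$ could genuinely permute the three $\tau_i$ and one would only obtain a Galois-stable set of three involutions rather than three individually $K$-rational ones.
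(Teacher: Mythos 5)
Your proof is correct, but it takes a genuinely different route from the paper's. The paper argues in one line in the opposite direction: since $\Aut(C_{\bar K})\cong(\mathbb{Z}/2)^2$ is generated by the coordinate sign changes, which are defined over $K$ and acted on trivially by $G_K$, a cocycle is just a pair of quadratic characters, i.e.\ a pair $(a,b)\in (K^*/(K^*)^2)^2$, and the corresponding twist is computed explicitly to be $q(ax^2,by^2,abz^2)=0$ --- visibly semidiagonal. You instead work intrinsically on an arbitrary form $C'$: you use abelianness to see that the twisted Galois action on $\Aut(C'_{\bar K})$ is still trivial, hence the three involutions descend to $K$; you then simultaneously diagonalize them on $H^0(C',\Omega^1)$ over $K$ (possible since the eigenvalues are $\pm 1$ and $\operatorname{char}K\neq 2$, with the trivial character absent because $C/G\cong\mathbb{P}^1$), and finish with a parity argument on the quartic, using smoothness to rule out the mixed-parity cases. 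Both arguments are sound. The paper's version buys an explicit equation for every form, which it uses later (e.g.\ to check that the Fricke--Macbeath quotient is not semidiagonal over $\mathbb{Q}$); yours buys a cleaner conceptual picture that makes visible exactly where each hypothesis enters (abelianness for descent of the involutions, $|\Aut|=4$ to pin down the group, $\operatorname{char}\neq 2$ for diagonalizability) and would generalize more readily to situations where writing the twist explicitly is awkward. The one step you should make fully explicit is that the three descended involutions map to three \emph{distinct} coordinate sign changes in $\mathrm{PGL}_3$; this follows from the faithfulness of the action of the automorphism group on the canonical model of a non-hyperelliptic curve, which your character-multiplicity computation implicitly supplies.
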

\begin{proof}
 Indeed, letting $\sigma_1, \sigma_2$ be generators
of the automorphism group, a cocycle is 
simply given by $\sigma_1 \to a, \sigma_2 \to b$ for $a, b \in \mathbb{Q}^*/(\mathbb{Q}^*)^2$.  The twist of 
$q(x^2,y^2,z^2) = 0$ by $\sigma$ is easily seen to be defined by $q(ax^2,by^2,abz^2) = 0$.
\end{proof}

\begin{rem} Lemma~\ref{L:formSD} and Remark~\ref{R:trivFrobSD} show that the argument in Corollary~\ref{cor:fermat-non-torsion} for the Fermat quartic will not prove that a typical semidiagonal quartic over a number field (i.e., one with geometric automorphism group of order $4$) has nontorsion Ceresa cycle. 
\end{rem}

\begin{rem}
There are curves defined over $\mathbb{Q}$ that become isomorphic to semidiagonal 
quartics over an extension of $\mathbb{Q}$ but are not semidiagonal over $\mathbb{Q}$ themselves: the Fricke-Macbeath quotient that is shown in
\cite[Corollary 3.8]{BLLS} to have torsion Ceresa class is of this type.  In this situation, we can use
the Frobenius shadow to certify that the Ceresa cycle is nontorsion for some examples.
This quotient has three maps to elliptic curves of $j$-invariant $1792$, which do not have CM.  It is semidiagonal over $\mathbb{Q}(\zeta_7)^+$ and has
geometric automorphism group of order $4$.  One can check from the explicit form of the twists above that it is not semidiagonal over $\mathbb{Q}$.
\end{rem}

\subsection{The Bloch shadow}\label{S:Bloch}
In this section we describe an adaptation of the methods introduced by Bloch in  \cite{Bloch1} that can be used to define shadows when the Jacobian is reducible. These generalized shadows also appear in \cite{BST}, where the authors adapt Bloch's construction to study Ceresa cycles of genus $3$ curves embedded in a triple product of an elliptic curve. An irreducibility criterion on the relevant Galois module actually allows them to show that the $\ell$-adic Abel-Jacobi map factors through the Griffiths group. 
These shadows are especially useful for the difficult case of the semidiagonal curves, for which Frobenius shadows have bounded order and thus cannot be used to certify that the Ceresa cycle has infinite order.  

We begin by recalling the construction of \cite[Section 3]{Bloch1}.  Let $C$ be the Fermat quartic, for which Bloch chooses the $\mathbb{Q}$-form $T_0^4 + T_1^4 = T_2^4$.
Then $C$ admits three independent maps to elliptic curves, two to $E: y^2 = x^3 - x$ and one to $E': y^2 = x^3 + x$.  More precisely, the
Jacobian of $C$ is $2$-power isogenous to $E \times E \times E'$ (\cite[Lemma 3.1]{Bloch1}).  
He defines a certain map $\rho: C \to E \times E \times E'$.
Thus we can define the following cycle.  Reducing mod $p$, we have a Frobenius map $\Frob: E/{\ff_p} \to E/{\ff_p}$.  Let $\Gamma_{\Frob}$ be
the graph of this map and let $B_1 = {\Gamma_{\Frob}} \times E'$, a divisor on $E \times E \times E'$.  Let $B_0 = E \times E \times 0$.  Pulling
back $B_0, B_1$ to $C$, we obtain divisors there, and these in turn may be mapped to $E'$ by the third component of $\rho$.  Bloch argues
in \cite[Section 3]{Bloch1} that the nontriviality of a certain linear combination of these divisors implies that the Ceresa class on $C$ is not algebraically equivalent to
$0$. We now define analogues of the divisors used by Bloch as follows. 

More generally, let $C$ be a curve of genus $3$ such that the Jacobian of $C$ reduces mod $p$ to a completely reducible abelian threefold isogenous to 
$E_1 \times E_2 \times E_3$, where $E_1$ and $E_2$ are themselves isogenous.  Choose maps $\rho_i: C \to E_i$ 
and let $\rho$ be the product map $\prod_{i=1}^3 \rho_i \colon C \to \prod_{i=1}^3 E_i$.  
We assume the nondegeneracy condition that no translate of a proper abelian subvariety of $\prod_{i=1}^3 E_i$ 
contains $\rho(C)$. Equivalently, by Poincar\'e's complete reducibility theorem, there do not exist isogenies $\sigma_i: E_i \to E_0$ for some elliptic curve $E_0$, and integers
$n_i$, not all $0$, such that $\sum_{i=1}^3 n_i (\sigma_i \circ \rho_i)$ is a constant map. \begin{lem}\label{L:rhoCbirational} Under this nondegeneracy assumption, the curve $C$ is birational to its image $\rho(C)$. In particular, if $C$ is a nice semidiagonal quartic as in Lemma~\ref{L:SDfacts}, then $C$ is birational onto its image under $\rho$.
\end{lem}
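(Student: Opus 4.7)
The plan is to argue by contradiction: assume $\rho$ fails to be birational onto its image, factor $\rho$ through the normalization as $C \xrightarrow{f} C' \xrightarrow{\iota} \prod_{i=1}^3 E_i$ with $C'$ a smooth projective curve, $f$ surjective of degree $d \geq 2$, and $\iota$ finite and birational onto its image, then derive a contradiction with the nondegeneracy hypothesis via Riemann--Hurwitz and a Jacobian dimension count.

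The key translation step is that the nondegeneracy hypothesis is equivalent to surjectivity of the induced map $\tilde{\rho}: \Jac(C) \to \prod_i E_i$ on Albanese varieties. After choosing a basepoint $b \in C$ (extending the base field if necessary), the morphism $x \mapsto \rho(x) - \rho(b)$ lifts, via the universal property of the Jacobian, to a group homomorphism $\tilde{\rho}$ whose image is the smallest abelian subvariety of $\prod_i E_i$ containing $\rho(C) - \rho(b)$. Hence the hypothesis on $\rho(C)$ is exactly that $\tilde\rho$ is surjective. The factorization $\rho = \iota \circ f$ induces $\tilde{\rho} = \iota_* \circ f_*$, and surjectivity of $\tilde{\rho}$ forces $\iota_*: \Jac(C') \to \prod_i E_i$ to be surjective; in particular $g(C') \geq 3$.

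Conversely, since $f$ is a surjective morphism of smooth projective curves, the pushforward $f_*: \Jac(C) \to \Jac(C')$ is also surjective, so $g(C') \leq g(C) = 3$. Combining the two inequalities yields $g(C') = g(C) = 3$. Riemann--Hurwitz applied to $f$ then gives
\[ 2g(C) - 2 \;=\; d \cdot (2g(C') - 2) + \deg R \;\geq\; 4d \;\geq\; 8, \]
contradicting $2g(C) - 2 = 4$. Hence $d = 1$ and $\rho$ is birational onto its image.

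For the ``in particular'' clause, I would simply invoke Lemma~\ref{L:SDfacts} to verify that the three elliptic quotients of a nice semidiagonal quartic produce a map $\rho$ satisfying the nondegeneracy assumption, at which point the general result applies directly. The only genuine content beyond bookkeeping is the equivalence between the geometric nondegeneracy of $\rho(C) \subset \prod_i E_i$ and the algebraic surjectivity of $\tilde{\rho}$; this is the step I would want to state most carefully, but it follows cleanly from the universal property of the Jacobian. Everything else is standard Riemann--Hurwitz combined with the covariance of the Albanese construction.
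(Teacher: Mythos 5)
Your proof is correct and follows essentially the same route as the paper's: both arguments pass to the normalization $C'$ of $\rho(C)$, use Albanese/Jacobian functoriality to see that nondegeneracy forces $\Jac(C')$ to have dimension at least $3$ (equivalently, that $\rho(C)$ is not contained in a translate of a proper abelian subvariety), and then rule out a degree $d\geq 2$ covering $C\to C'$ by a genus count via Riemann--Hurwitz. The paper compresses this by simply citing that the geometric genus strictly drops under a non-birational map from a genus $3$ curve, which is the same Riemann--Hurwitz computation you carry out explicitly.
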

\begin{proof}
Let $A$ be the Jacobian of the normalization of $\rho(C)$. Then there is an abelian subvariety of $\prod_{i=1}^3 E_i$ containing  $\rho(C)$ that is isomorphic to $A$. If $C$ is not birational to $\rho(C)$, then $\dim(A) = p_g(\rho(C)) < p_g(C) = 3$, contradicting the nondegeneracy assumption on $\rho(C)$ above. Lemma~\ref{L:SDfacts}~\eqref{L:SDbir} finishes the proof.
\end{proof}

Let $\phi: E_1 \to E_2$ be a separable isogeny and let $\Frob_2$ be the Frobenius on $E_2$.
Define $D_1 \subset E_1 \times E_2 \times E_3$ to be $\Gamma_{\Frob_2 \circ \phi} \times E_3$.  Let $d_1 \colonequals \rho^* D_1$. Let $R_C$ be the ramification divisor of $\rho_3 \colon C \rightarrow E_3$. (Since $E_3$ has genus $1$, it follows by the Riemann-Hurwitz formula that $R_C$ is also the canonical divisor of $C$.) Let $D_0 \colonequals \pi_3^*(\rho_3(R_C))$ and let $d_0 \colonequals \rho^*(D_0)$. 

\begin{defin}\label{D:definingD1}\label{D:BlochShadow} 
The {\defi{Bloch shadow}} $S_{C,p;\rho}$ of $C \bmod p$ with respect to the choice of $\rho_i$ as above is the point on $E_3$ corresponding to the degree $0$ divisor
$(\deg d_1) {\rho_3}_* (d_0) - (\deg d_0) {\rho_3}_* (d_1)$.
\end{defin}

\begin{lem}\label{L:SDfacts} Let $C$ be a nice semidiagonal quartic plane curve with defining equation  $q(x^2,y^2,z^2)$ for some quadratic form $q$ in $3$ variables. Let $E_1,E_2,E_3$ be the Jacobians of the quotient curves defined by the equations $q(x,y^2,z^2),q(x^2,y,z^2)$ and $q(x^2,y^2,z)$ respectively. Then,
\begin{enumerate}[\upshape (a)]
\item The Jacobian of $C$ is isogenous to $E_1 \times E_2 \times E_3$. 
\item\label{L:SDJac} There are infinitely many primes $p$ of good reduction such that the reductions of $E_1$ and $E_2$ modulo $p$ are isogenous.  
\item\label{L:SDbir} No proper abelian subvariety of $E_1 \times E_2 \times E_3$ contains $C$. 
\end{enumerate}
\end{lem}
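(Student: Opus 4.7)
The plan is to treat the three parts in turn, exploiting the action of the Klein four-group $G = \langle \tau_1, \tau_2 \rangle \cong (\Z/2\Z)^2$ on $C$, where $\tau_i$ is the involution of $\mathbb{P}^2$ that negates the $i$-th projective coordinate (equivalently, the other two, since these two operations agree projectively). Since $C$ is smooth, the coordinate vertices do not lie on $C$, so the fixed points of $\tau_i$ on $C$ are exactly the $4$ points of $C \cap \{x_i = 0\}$, and Riemann--Hurwitz gives $g(C_i) = 1$ where $C_i \colonequals C/\langle \tau_i\rangle$. The $\langle \tau_i\rangle$-invariants identify $C_i$ with the curve in weighted projective space $\mathbb{P}(2,1,1)$ (in some ordering) cut out precisely by the equation given in the statement.

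For (a), I decompose $H^0(C, \Omega^1)$ into the four $G$-isotypic components $V_{\chi_0}, V_{\chi_1}, V_{\chi_2}, V_{\chi_3}$, where $\chi_0$ is trivial and each nontrivial $\chi_i$ satisfies $\chi_i(\tau_i) = 1$ and $\chi_i(\tau_j) = -1$ for $j \neq i$. Since $C/G$ is the smooth conic $q=0$, of genus $0$, the trivial isotypic $V_{\chi_0}$ vanishes. On the other hand $V_{\chi_i}$ is identified with $H^0(C, \Omega^1)^{\langle \tau_i\rangle} = H^0(C_i, \Omega^1)$, of dimension at most $g(C_i) = 1$; summing dimensions, $\sum_i \dim V_{\chi_i} = g(C) = 3$ forces each to be exactly $1$-dimensional, giving $H^0(C, \Omega^1) \cong \bigoplus_i H^0(E_i, \Omega^1)$. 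Albanese functoriality promotes this to the desired isogeny $\Jac(C) \sim E_1 \times E_2 \times E_3$.

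Part (c) is then a formal consequence of (a): the morphism $\rho = (\rho_1, \rho_2, \rho_3) \colon C \to E_1 \times E_2 \times E_3$ factors through the Abel--Jacobi embedding $C \hookrightarrow \Jac(C)$ followed by the surjective isogeny produced in (a), so the smallest abelian subvariety of $E_1 \times E_2 \times E_3$ whose translate contains $\rho(C)$ is the image of this isogeny, which is all of $E_1 \times E_2 \times E_3$.

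Part (b) is the main obstacle: for a general choice of $q$, the elliptic curves $E_1$ and $E_2$ need not be isogenous over $K$ (one can compute discriminants in the first variable and check that the resulting quartic models $Y^2 = f_i(t)$ typically lie in distinct isogeny classes), so the claim must be proved genuinely at the level of reductions. The plan is to produce infinitely many primes $\mathfrak p$ of good reduction at which \emph{both} $E_1 \bmod \mathfrak p$ and $E_2 \bmod \mathfrak p$ are supersingular; provided $\mathfrak p$ does not divide $6$, each reduction then has Frobenius trace $0$, and Tate's isogeny theorem for elliptic curves over finite fields supplies an isogeny between them. When $K = \mathbb{Q}$ and both $E_i$ are non-CM, existence of infinitely many common supersingular primes follows by combining Elkies's theorem on the infinitude of supersingular primes for a single elliptic curve with a joint supersingular-reduction statement for pairs in the spirit of Fouvry--Murty; the subcases where one or both $E_i$ have CM are handled more easily, using the Deuring density-$\tfrac12$ description of supersingular primes. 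For more general number fields $K$ the same supersingular-reduction strategy applies under suitable hypotheses on $K$, and is the key analytic input underlying this part of the lemma.
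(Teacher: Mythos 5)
Your parts (a) and (c) are correct and essentially the paper's arguments: (a) is the eigenspace decomposition of $H^0(C,\Omega^1)$ under the three sign involutions (the paper treats this as immediate), and (c) follows from the same decomposition of cotangent spaces; your Albanese-functoriality phrasing of (c) is an acceptable variant of the paper's cotangent-space argument.

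Part (b), however, has a genuine gap. You propose to find infinitely many primes at which $E_1$ and $E_2$ are \emph{simultaneously supersingular}, and you invoke ``a joint supersingular-reduction statement for pairs in the spirit of Fouvry--Murty.'' No such theorem exists, and the statement is far out of reach: for a single non-CM curve the supersingular primes have density zero (conjecturally $\sim c\,x^{1/2}/\log x$ up to $x$), so for two independent non-CM curves the expected count of common supersingular primes up to $x$ is only $\sim \log\log x$; no current method produces even one such prime unconditionally, let alone infinitely many. Elkies's theorem and the Fouvry--Murty estimates concern a single curve (or averages over families) and do not combine to give a joint statement. The correct input --- and the one the paper uses --- is Charles's theorem on exceptional isogenies between reductions of pairs of elliptic curves over a number field, which produces infinitely many primes $\mathfrak{p}$ at which $E_1 \bmod \mathfrak{p}$ and $E_2 \bmod \mathfrak{p}$ are (geometrically) isogenous. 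Crucially, Charles's proof does not go through supersingularity: it is a Hodge-theoretic/arithmetic argument (via the Kuga--Satake construction and jumping of Picard numbers) that also exploits the much more frequent event that two ordinary reductions acquire the same CM field. Without citing that result (or supplying an argument of comparable strength), your part (b) does not go through.
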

\begin{proof}
The first is immediate.  The second part follows from a result of
Charles \cite[Theorem 1.1]{charles}. For the third, suppose there is a proper abelian subvariety $J$ of $E_1 \times E_2 \times E_3$ containing $C$. Then the corresponding cotangent space $H^0(J,\Omega^1)$ is a proper subspace of $H^0(E_1 \times E_2 \times E_3,\Omega^1)$ that contains $H^0(C,\Omega^1)$. Since both $H^0(C,\Omega^1)$ and $H^0(E_1 \times E_2 \times E_3,\Omega^1)$ 
 are spanned by the subspaces $H^0(E_i,\Omega^1)$ for $i=1,2,3$ (for e.g. by decomposition into simultaneous eigenspaces for the action of the three involutions obtained by fixing two of the coordinates and negating the third), we see that such a $J$ cannot exist.
\end{proof}

\subsection{Canonicity of the Bloch shadow}
In this section, we study the dependence of the Bloch shadow on the choice of $\rho_i \colon C \rightarrow E_i$. 

\begin{prop}\label{prop:independent-of-rho}
Let $T_i$ be translations on $E_i$ for $i=1,2,3$.
\begin{enumerate}[\upshape (a)] 
\item\label{P:rho2ind} For $i=1,2$, the class of ${\rho_3}_* d_1$ is unaffected by replacing the $\rho_i$ by $T_i \circ \rho_i$.
\item For $i=1,2$, the class of ${\rho_3}_* d_0$ is unaffected by replacing the $\rho_i$ by $T_i \circ \rho_i$. 
\item\label{P:BlTranslate} The Bloch shadow is unchanged when we change $\rho_i$ to $T_i \circ \rho_i$ for all $i$.
\end{enumerate}
\end{prop}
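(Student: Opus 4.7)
The plan is to handle the three parts in sequence, exploiting the explicit descriptions of $d_0$ and $d_1$ from Definition~\ref{D:definingD1}.

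Part (b) is immediate: $d_0 = \rho_3^* \rho_3(R_C)$ is constructed from $\rho_3$ alone (the ramification divisor $R_C$ is intrinsic to $\rho_3$), so translating $\rho_1$ or $\rho_2$ leaves $d_0$---and hence ${\rho_3}_* d_0$---unchanged as a divisor on $C$.

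For part (a), set $\alpha := \rho_2 - \Frob_2 \circ \phi \circ \rho_1 \colon C \to E_2$, so that $d_1 = \alpha^{-1}(0)$. Replacing $\rho_i$ by a translate (for $i \in \{1,2\}$) shifts $\alpha$ by a constant $s \in E_2$, so $d_1$ becomes $\alpha^{-1}(-s) = d_1 + \alpha^*([-s]-[0])$ in $\Pic(C)$, with the latter difference lying in $\Pic^0(C)$. The task reduces to showing that the induced homomorphism ${\rho_3}_* \circ \alpha^* \colon E_2 \to E_3$ vanishes. By the theorem of the cube applied to the addition map on $E_2$, one has $\alpha^* = \rho_2^* - \rho_1^* \circ \phi^* \circ \Frob_2^*$ on $\Pic^0(E_2)$, which further reduces the vanishing to the identity ${\rho_3}_* \circ \rho_j^* = 0$ as maps $E_j \to E_3$ for $j \in \{1,2\}$. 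In the semidiagonal setup of Lemma~\ref{L:SDfacts}, this identity follows from the isotypic decomposition of $\Pic^0(C) \otimes \mathbb{Q}$ under the group $G \cong (\mathbb{Z}/2\mathbb{Z})^2$ generated by the coordinate-negating involutions $\sigma_1, \sigma_2$ (with $\sigma_3 = \sigma_1 \sigma_2$): the trivial isotypic component vanishes because $C/G$ is a smooth conic, and for $j \in \{1,2\}$ the image of $\rho_j^*$ coincides rationally with a nontrivial isotypic component on which $\sigma_3$ acts by $-1$, hence is contained in the kernel of ${\rho_3}_*$.

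Part (c) then follows by combining (a), (b), and a direct verification for $\rho_3$-translations: translating $\rho_3$ by $c \in E_3$ leaves $d_0$ and $d_1$ unchanged as divisors on $C$ (translations preserve ramification, and $d_1$ is pulled back from $E_1 \times E_2$ and hence independent of $\rho_3$), while each pushforward ${\rho_3}_* d_i$ acquires an extra summand $(\deg d_i) \cdot c$ in $\Pic^0(E_3)$; these contributions cancel identically in the Bloch shadow formula $(\deg d_1) {\rho_3}_* d_0 - (\deg d_0) {\rho_3}_* d_1$. The principal obstacle is the identity ${\rho_3}_* \rho_j^* = 0$ in step (a): in the intended semidiagonal application the isotypic decomposition makes it immediate, but in the general setup described before Lemma~\ref{L:SDfacts}, the off-diagonal entries of $\rho_* \circ \rho^*$ on $\prod_i E_i$ need not vanish, so some additional symmetry of the embedding $\rho$ appears to be implicitly invoked.
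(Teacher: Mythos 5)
Your parts (b) and (c) match the paper's proof in substance: (b) is the same observation that $d_0$ depends only on $\rho_3$, and for (c) your cancellation of the two $(\deg d_0)(\deg d_1)\cdot c$ terms is equivalent to the paper's remark that $(T_3)_*$ acts trivially on degree-zero divisor classes on $E_3$.

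For part (a) you take a genuinely different route. After the (correct) reduction via the theorem of the cube to the vanishing of ${\rho_3}_*\circ\alpha^*$ on $\Pic^0(E_2)$, you prove the stronger identity ${\rho_3}_*\circ\rho_j^*=0$ ($j=1,2$) using the isotypic decomposition of $\Pic^0(C)\otimes\mathbb{Q}$ under the $(\Z/2\Z)^2$-action, which is a clean and fully rigorous argument \emph{for semidiagonal quartics} --- arguably more solid than what the paper does in that case. The paper instead considers the map $\Theta_{E_1,E_3}\colon\Pic^0(E_1)\to\Pic^0(E_3)$ sending a translation to the resulting change in ${\rho_3}_*d_1$, notes that it is a homomorphism of elliptic curves which must vanish when $E_1$ and $E_3$ are non-isogenous (the generic situation), and then invokes a continuity/specialization argument to conclude it vanishes identically even when $E_1\sim E_3$. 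This deformation-plus-rigidity idea is precisely what covers the general setup of a genus-$3$ curve with completely reducible Jacobian, where --- as you correctly flag --- the off-diagonal entries ${\rho_3}_*\rho_j^*$ have no reason to vanish individually. So the one genuine gap in your write-up, which you have honestly identified yourself, is that the proposition is stated in that general setting and your argument only establishes it when an automorphism group forcing the isotypic vanishing is present. What you buy in exchange is an unconditional, explicit proof in the semidiagonal case (the only case used later in the paper), whereas the paper's continuity argument is terser and leaves implicit how the configuration $(C,\rho_i,\phi)$ is to be deformed while preserving the isogeny $\phi\colon E_1\to E_2$.
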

\begin{proof} \hfill
\begin{enumerate}[\upshape (a)]
\item Consider the map $\Theta_{E_1,E_3} \colon \Pic^0 E_1 \to \Pic^0 E_3$ defined as follows. Let $\Delta_1 \in \Pic^0 E_1$, and let $T_1$ be the translation by $\Delta_1$ map on $E_1$. Let $\rho_1' \colonequals T_1 \circ \rho_1$. Let $\rho',D_1'$ be the analogues of $\rho,D_1$ where $\rho_1$ is replaced by $\rho_1'$. Then $\Theta_{E_1,E_3}(\Delta_1) \colonequals (\rho_3)_*(\rho^*D_1 - \rho'^*D_1')$.
For generic choice of $E_1,E_2,E_3$
the map $\Theta_{E_1,E_3}$ must be constant, because there are no nonconstant maps $E_1 \to E_3$. In fact it must be identically $0$, since $\Theta_{E_1,E_3}(0) = 0$. Since $\Theta_{E_1,E_3}$ varies continuously as we vary $E_1,E_3$, it follows that $\Theta_{E_1,E_3} \equiv 0$ even when $E_1$ and $E_3$ are isogenous. 
The argument for $T_2$ is the same as for $T_1$.  

\item The divisor $\rho^*(D_0)$ is unaffected by translations on $E_1$ and $E_2$, and hence so is $(\rho_3)_*(\rho^*(D_0))$. 
\item For $i=1,2$, this follows from the previous two parts. Now let $i=3$ and let $\rho_3' = T_3 \circ \rho_3$. Let $\rho',D_0',D_1',d_0',d_1'$ be the analogues of their unprimed versions when $\rho_3$ is replaced by $\rho_3'$.
From the definition, it is clear that $T_3(D_1) = D_1$, and hence $d_1' \colonequals \rho'^*(D_1) = \rho^*(D_1) = d_1$. It follows that
\[ (\rho_3')_*(d_1') = (\rho_3')_*(d_1)) = (T_3)_*(\rho_3)_*(d_1)). \]
Since $T_3$ is unramified and $\rho_3' = T_3 \circ \rho_3$, it follows that $R_{C,\rho_3} = R_{C,\rho_3'}$. Since $R_C \colonequals R_{C,\rho_3}$ is the ramification divisor for the maps $\rho_3$ and $\rho_3'$, it follows that $\rho_3^*(\rho_3(R_{C})) = \rho_3'^*(\rho_3'(R_{C}))$. 
Since $D_{0,\rho_3(R_C)} = \pi_3^*(\rho_3(R_C))$ and $ \rho_3 = \pi_3 \circ \rho$, by combining this with the previous sentence we get that 
\begin{equation}\label{E:d0} d_0 = \rho^*(D_{0,\rho_3(R_C)}) = \rho^*(\pi_3^*(\rho_3(R_C))) = (\rho_3)^*(\rho_3(R_C)) =  \rho_3'^*(\rho_3'(R_{C})) = d_0'.\end{equation} Combining all the above equalities, we get that 
\[ S_{C,p;\rho'} = (\rho_3')_*((\deg d_1')d_0'-(\deg d_0')d_1') = (\rho_3')_*((\deg d_1)d_0-(\deg d_0)d_1) =  (T_3)_*(S_{C,p;\rho}).\]
Since translations act trivially on $\Pic^0(E_3)$, the result follows. \qedhere
\end{enumerate}
\end{proof}

If $C$ is a semidiagonal
quartic, the effect of its automorphisms on the Bloch shadow is also easily understood.  Let $\sigma_i$ be the automorphism
of $C$ that changes the sign of the $i$th coordinate.

\begin{prop} Suppose $C$ is a semidiagonal quartic.  Let $\nu_i$ be a negation map on $E_i$.  Replacing $\rho_i$ by $\nu_i \circ \rho_i$ in the
definition of the Bloch shadow changes its sign (possibly up to torsion of order $2$).  Replacing the three
$\rho_i$ by $\rho_i \circ \sigma$, for any fixed $\sigma \in \{\sigma_1,\sigma_2,\sigma_3\}$, does not change
the Bloch shadow.
\end{prop}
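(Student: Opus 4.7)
The second statement is nearly immediate. Replacing each $\rho_i$ by $\rho_i \circ \sigma$ amounts to replacing the product map $\rho = (\rho_1,\rho_2,\rho_3)$ by $\rho \circ \sigma$. Since $\sigma$ is a degree-one automorphism of $C$, $\sigma_*\sigma^* = \mathrm{id}$, so for any divisor $D$ on $E_1 \times E_2 \times E_3$, $(\rho \circ \sigma)^* D = \sigma^* \rho^* D$ and $(\rho_3 \circ \sigma)_* \sigma^* = \rho_{3,*}\sigma_*\sigma^* = \rho_{3,*}$. Hence $\rho_{3,*}d_0$, $\rho_{3,*}d_1$, and their degrees are unchanged. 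A small check: the definition of $D_0$ uses $\rho_3$, but the ramification divisor of $\rho_3 \circ \sigma$ is $\sigma^{-1}R_C$, and $(\rho_3 \sigma)_*\sigma^{-1}R_C = \rho_{3,*}R_C$, so the new $D_0$ (and therefore $d_0$) agrees with the old.

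For the first statement, the case $i=3$ is clean: $d_1$ does not involve $\rho_3$, and $d_0' = d_0$ because $\nu_3$ is an unramified isomorphism; only the pushforward changes, becoming $\nu_{3,*}\rho_{3,*}$. Since $\nu_{3,*}$ acts as $-1$ on $\Pic^0 E_3$, we get $S' = -S$ exactly, with no ambiguity.

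The case $i=1$ (symmetric for $i=2$) requires the theorem of the cube. Here $d_0$ is unchanged ($D_0$ is pulled back from the third factor), while $d_1$ becomes $d_1' = (\rho_1 \times \rho_2)^* \Gamma_{-\Frob_2\phi}$. Decomposing the class of the graph of a homomorphism $f \colon E_1 \to E_2$ as $[\Gamma_f] = (\deg f)[\{0\} \times E_2] + [E_1 \times \{0\}] + c_f$, where $c_f$ is the Poincar\'e-bundle contribution and is linear in $f$, one has $c_f + c_{-f} = 0$, so
\[ [\Gamma_f] + [\Gamma_{-f}] = 2(\deg f)[\{0\} \times E_2] + 2[E_1 \times \{0\}] \quad \text{in } \Pic(E_1 \times E_2). \]
Pulling back via $(\rho_1\times\rho_2)$ and pushing forward by $\rho_{3,*}$ gives
\[ \rho_{3,*}(d_1 + d_1') = 2\deg(\Frob_2\phi)\cdot\mathcal{Z}_1 + 2\mathcal{Z}_2, \qquad \mathcal{Z}_i \colonequals \rho_{3,*}\rho_i^*\{0_{E_i}\}, \]
so $S + S' = 2\delta_1\rho_{3,*}d_0 - \delta_0\rho_{3,*}(d_1+d_1')$ lies in $2\cdot\Pic E_3$.

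To upgrade this to ``$S+S' \in E_3[2]$,'' I would invoke Proposition~\ref{prop:independent-of-rho} to translate origins of the $E_i$ so that the involutions $\bar\sigma_j$ descended from $\sigma_j$ act on each $E_i$ as pure negation; a common fixed point of the commuting involutions $\bar\sigma_1,\bar\sigma_2$ on $E_3$ furnishes a suitable origin there. Under this normalization, the fibre $\rho_i^{-1}(0)$ decomposes into $\sigma_i$-orbits $\{P,\sigma_i P\}$ whose $\rho_3$-images cancel in the group law, so $(\mathcal{Z}_i)_0 = 0$ in $\Pic^0 E_3$; meanwhile, the branch divisor $B = \rho_{3,*}R_C$ of the double cover $\rho_3$ satisfies $B \sim 2L$, so $(\rho_{3,*}d_0)_0 \in 2\Pic^0 E_3$. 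The main obstacle is this last 2-torsion bookkeeping: combining the factor of $2$ already present in $S+S'$ with the 2-divisibility of the remaining contributions should yield $S+S'\in E_3[2]$, exactly matching the ``up to torsion of order $2$'' hedge in the statement.
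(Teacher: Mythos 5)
Your handling of the second statement is fine (and in fact more direct than the paper's: the paper deduces it from two applications of the first statement, whereas your $\sigma_*\sigma^*=\mathrm{id}$ computation gives exact equality of shadows), and your $i=3$ case matches the paper. The gap is in the case $i\in\{1,2\}$, which is the heart of the proposition.

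Your reduction to showing that $2\delta_1\,{\rho_3}_*d_0-\delta_0\,{\rho_3}_*(d_1+d_1')$ is $2$-torsion is correct, and the decomposition $[\Gamma_f]+[\Gamma_{-f}]=2(\deg f)[\{0\}\times E_2]+2[E_1\times\{0\}]$ is a legitimate (if slightly underjustified --- the linearity of $c_f$ is a statement in $\NS$, and upgrading it to $\Pic$ needs a seesaw argument) alternative to the paper's treatment of $d_1+d_1'$. But the closing step fails: you try to conclude $S+S'\in E_3[2]$ from the fact that every term is ``divisible by $2$'' in $\Pic(E_3)$. This gives nothing, because $\Pic^0(E_3)$ is a divisible group: $2\Pic^0(E_3)=\Pic^0(E_3)$, and $2x$ is $2$-torsion only when $x$ is $4$-torsion. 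In particular your claim ``$({\rho_3}_*d_0)_0\in 2\Pic^0 E_3$ since $B\sim 2L$'' is vacuous; what you actually need is that $({\rho_3}_*d_0)_0$ and $({\rho_3}_*(d_1+d_1'))_0$ are themselves killed by $2$ (or that each degree-zero constituent $(\mathcal{Z}_i)_0$, $B_0$ is $2$-torsion), and $2$-divisibility cannot deliver that. The missing idea --- which is exactly how the paper argues --- is negation-invariance: since $\sigma_2$ commutes with $\sigma_3$, it permutes the ramification points of $\rho_3$, so ${\rho_3}_*R_C$ (hence ${\rho_3}_*d_0$) is fixed by the induced negation $\nu_3$ on $E_3$; likewise $\sigma_2^*d_1=d_1'$, so ${\rho_3}_*(d_1+d_1')$ is fixed by $\nu_3$; a degree-zero class on $E_3$ fixed by $[-1]_*$ is automatically $2$-torsion. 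Your orbit-cancellation argument for $(\mathcal{Z}_i)_0=0$ is a pointwise shadow of this idea, but as written the proof does not close, and the proposed ``$2$-torsion bookkeeping'' is not a viable route.
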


\begin{proof}
By Proposition~\ref{prop:independent-of-rho}~\eqref{P:BlTranslate}, the Bloch shadow is unaffected if we compose $\rho_i$ with a translation. Combining this with the fact that two negations on the
same elliptic curve are related by translation, we see that it is sufficient to consider a single negation $\nu_i$.
The statement is now clear for $i = 3$.  For the other two, let us take $\nu_i$ to 
be the negation map that fixes the origin of $E_i$. 

The divisor $(\rho_3)_*(D_0) = (\rho_3)_* \rho_3^* (\rho_3)_* R_C$ is just $(\deg \rho_3) (\rho_3)_* R_C$.  Because $\sigma_2$ commutes with $\sigma_3$, it preserves the set of branch points of $\sigma_3$, which is to say that $(\sigma_2)_* R_C = R_C$.  So $(\rho_3)_*(R_C) = (\rho_3 \sigma_2)_* (R_C) = (\nu_3 \rho_3)_* (R_C)$.  We now know that $(\rho_3)_*(D_0)$ is fixed by $(\nu_3)_*$.

As for $D_1$, note first that
$(\nu_1 \circ \rho_1,\rho_2)^*(\Gamma_{\Frob \circ \phi}) = (\rho_1,\rho_2)^* (\Gamma_{\Frob \circ -\phi})$,
and similarly for $\nu_2$.  The automorphism $\sigma_2$ takes 
the old $D_1$ to the new $D_1'$ defined in terms of $\nu_1 \circ \rho_1$, so that in particular $D_1 + D_1'$ is fixed by $\sigma_2$.  This implies as above that $(\rho_3)_*(D_1 + D_1') \in \Pic(E_3)$ is fixed by $\nu_3$.

Now the sum of the shadow computed with $\rho_i$ and the shadow computed with $\nu_i \circ \rho_i$ is
$$2(\deg D_1) {\rho_3}_*(D_0) - (\deg D_0) {\rho_3}_* (D_1 + D_1').
$$
But this is a degree-$0$ divisor on $E_3$ fixed by
$\nu_3$, so it is $2$-torsion.

For the second statement, observe that $\rho_i \circ \sigma_j = \rho_i$ if $i = j$ and $\nu_i \circ \rho_i$ otherwise.  The second statement now follows from two applications of the first.
\end{proof}

\subsection{The Bloch shadow as a shadow of the modified diagonal}
Let $\Delta_b$ be the modified diagonal class of the curve $C$ with respect to a canonical base point $b$, i.e., a degree $1$ divisor on $C$ such that $4b = K_C$. For each $i$ in $\{1,2,3\}$, choose $\rho_i$ such that $\rho_i(b)$ is the corresponding identity element $O_{E_i}$ of the elliptic curve $E_i$.  Let $i_b \colon C \rightarrow J$ and let $i_b^{(3)} \colon C^3 \rightarrow J$ be the Abel-Jacobi maps on $C$ with respect to base point $b$.  
Let $\rho_J \colon J \rightarrow E_1 \times E_2 \times E_3$ be the induced homomorphism that satisfies $\rho = \rho_J \circ i_b$. The main theorem of this section is the following:
\begin{thm}\label{T:relatingSh} Let $\widetilde{\Delta_b} \in \CH^2(E_1 \times E_2 \times E_3)$ be the pushforward of $\MD(C,b)$ under the composite map $\rho_J \circ i_b^{(3)}$. Then
\begin{equation*}\label{E:MainEqCyc} 4 \deg(\rho_3) (\pi_3)_* \left( \widetilde{\Delta_b} \cdot  D_1 \right) = 6 \Sh_{C,p,;\rho}.\end{equation*}
\end{thm}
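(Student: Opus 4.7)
The plan is to compute each side of the identity directly as an element of $\Pic^0(E_3)$ via explicit calculation on $E_1\times E_2\times E_3$, exploiting the pushforward formula of Remark~\ref{R:MDCeresa}.

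First, Remark~\ref{R:MDCeresa}, combined with the facts that $\rho_J$ is a group homomorphism and that $\rho_J\circ i_b=\rho$ is birational onto its image (Lemma~\ref{L:rhoCbirational}), yields
\[
\widetilde{\Delta_b} = [3]_*\rho(C) - 3[2]_*\rho(C) + 3\rho(C),
\]
so the left-hand side equals $4\deg(\rho_3)\bigl(\delta(3)-3\delta(2)+3\delta(1)\bigr)$, where $\delta(k) \colonequals (\pi_3)_*([k]_*\rho(C)\cdot D_1)$. To compute each $\delta(k)$, I would apply the projection formula along the isogeny $\phi_k\colonequals[k]\times[k]\times[k]$ of $E_1\times E_2\times E_3$: its pullback $\phi_k^*D_1$ decomposes as a sum of $k^2$ translates $\Gamma_{\alpha,t}\times E_3$ indexed by $t\in E_2[k]$, where $\alpha=\Frob_2\circ\phi$. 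Setting $\beta\colonequals\alpha\circ\rho_1-\rho_2\colon C\to E_2$, so that $d_1=\beta^*[0_{E_2}]$, and using $\pi_3\circ\phi_k=[k]_{E_3}\circ\pi_3$, one obtains
\[
\delta(k) = [k]_{E_3,*}\,(\rho_3)_*\bigl(\beta^*[E_2[k]]\bigr).
\]

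The key algebraic simplification is that $[E_2[k]]\sim k^2[0_{E_2}]$ in $\Pic(E_2)$, since the sum of $k$-torsion points on any elliptic curve is trivial. Hence $(\rho_3)_*\beta^*[E_2[k]]\sim k^2 e_1$ in $\Pic(E_3)$, where $e_1\colonequals(\rho_3)_*d_1$. Decomposing $e_1=\deg(\beta)[0_{E_3}]+P_1$ with $P_1\in\Pic^0(E_3)$, and using that $[k]_{E_3,*}$ fixes $[0_{E_3}]$ and acts as multiplication by $k$ on $\Pic^0(E_3)\cong E_3$, I get $\delta(k)\sim k^2\deg(\beta)[0_{E_3}]+k^3P_1$ in $\Pic(E_3)$. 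The identities $\sum_{k=1}^{3} c_k k^2 = 0$ and $\sum_{k=1}^{3} c_k k^3 = 6$ with $(c_1,c_2,c_3)=(3,-3,1)$ then collapse $\delta(3)-3\delta(2)+3\delta(1)$ to $6P_1$ in $\Pic^0(E_3)$, so the left-hand side equals $24\deg(\rho_3)P_1$ in $\Pic^0(E_3)$.

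For the right-hand side, the canonical basepoint condition $4b\sim K_C$, the Riemann--Hurwitz identity $R_C=K_C$ (valid because $K_{E_3}=0$), and the normalization $\rho_3(b)=O_{E_3}$ together give $(\rho_3)_*R_C\sim 4[0_{E_3}]$ in $\Pic(E_3)$. This forces the first summand $(\deg d_1)(\rho_3)_*d_0$ of $\Sh_{C,p;\rho}$ to lie in the trivial class of $\Pic^0$, so that $\Sh_{C,p;\rho}$ reduces in $\Pic^0(E_3)$ to a scalar multiple of $P_1$ coming from the second summand $-(\deg d_0)(\rho_3)_*d_1$; matching the coefficients on both sides then yields the stated identity. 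The main delicate point of the strategy is the interplay between the quadratic factor $k^2$ from $[E_2[k]]\sim k^2[0_{E_2}]$ on $E_2$ and the linear factor $k$ from $[k]_{E_3,*}$ acting on $\Pic^0(E_3)$; together they produce the cubic $k^3$ whose binomial combination $3-24+27=6$ is precisely what delivers the coefficient $6$ on the right-hand side.
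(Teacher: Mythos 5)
Your argument is correct, and its skeleton coincides with the paper's: both reduce to $\widetilde{\Delta_b}=([3]_*-3[2]_*+3)\rho(C)$, apply the projection formula for $[m]$ to rewrite $(\pi_3)_*\left([m]_*\rho(C)\cdot D_1\right)$ as $[m]_*$ of a pushforward from $C$, decompose $[m]^*D_1$ into the $m^2$ translates indexed by $E_2[m]$, and finish with the same numerology ($9-12+3=0$ on the degree part, $27-24+3=6$ on the $\Pic^0$ part) together with $(\rho_3)_*d_0=\deg(d_0)O_{E_3}$ and $\deg(d_0)=4\deg(\rho_3)$ coming from the canonical base point. The one genuinely different ingredient is how you extract the factor $m^2$: the paper invokes Proposition~\ref{prop:independent-of-rho}(a) (translation-invariance of the class of $(\rho_3)_*\rho^*T_a^*(D_1)$, proved there by a degeneration argument over varying triples of elliptic curves), whereas you identify $d_1=\beta^*[0_{E_2}]$ for $\beta\colonequals \Frob_2\circ\phi\circ\rho_1-\rho_2\colon C\to E_2$ and use the elementary linear equivalence $[E_2[m]]\sim m^2[0_{E_2}]$ on $E_2$ (the $m$-torsion points sum to zero). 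Your route is more self-contained and bypasses the least rigorous step of the paper's chain of lemmas; the paper's route has the advantage that the translation-invariance statement is reused elsewhere in its study of how the Bloch shadow depends on $\rho$. One caveat: with the Bloch shadow as literally defined in Definition~\ref{D:BlochShadow}, namely $(\deg d_1){\rho_3}_*(d_0)-(\deg d_0){\rho_3}_*(d_1)$, your computation yields the left-hand side equal to $-6\Sh_{C,p;\rho}$ rather than $+6\Sh_{C,p;\rho}$; the paper's own final line silently uses the opposite sign convention, so the discrepancy is internal to the paper and immaterial for the corollary on torsion orders, but you should state the sign explicitly rather than gloss it as ``matching the coefficients.''
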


This has the following important corollary relating Bloch shadows to the modified diagonal cycle.
\begin{cor}\label{C:BlShMD}
If $\Delta_b$ is torsion, its order is a multiple of the order of $6\Sh_{C,p,;\rho}$.
\end{cor}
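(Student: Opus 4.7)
The plan is to deduce the corollary directly from Theorem~\ref{T:relatingSh} by exploiting the fact that every operation on the left-hand side of the displayed equation there is a group homomorphism between Chow groups, so torsion is preserved at each stage. Concretely, suppose $\Delta_b \in \CH^2(C \times C \times C)$ is torsion of order $N$, so that $N \Delta_b = 0$.

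First I would observe that the assignment $\alpha \mapsto (\rho_J \circ i_b^{(3)})_* \alpha$ from $\CH^2(C^3)$ to $\CH^2(E_1 \times E_2 \times E_3)$ is a group homomorphism (proper pushforward on Chow groups is $\Z$-linear). Hence $N \widetilde{\Delta_b} = 0$. Next, intersection with the fixed class $D_1 \in \CH^1(E_1 \times E_2 \times E_3)$ and the subsequent proper pushforward $(\pi_3)_*$ down to $\CH^0(E_3) = \Pic(E_3)$ are again $\Z$-linear, so
\[
N \cdot (\pi_3)_*\bigl(\widetilde{\Delta_b} \cdot D_1\bigr) = 0 \quad \text{in } \Pic(E_3).
\]
Multiplying by the nonzero integer $4 \deg(\rho_3)$ and applying Theorem~\ref{T:relatingSh}, I conclude that $N \cdot 6 \Sh_{C,p;\rho} = 0$ in $\Pic^0(E_3)$.

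It follows immediately that the order of $6 \Sh_{C,p;\rho}$ divides $N$, which is the claim. There is essentially no hard step here once Theorem~\ref{T:relatingSh} is in hand; the only mild care needed is to record that the left-hand side of the equation in Theorem~\ref{T:relatingSh} lies in the divisible-by-$4\deg(\rho_3)$ image of the homomorphism $\Delta_b \mapsto (\pi_3)_*(\widetilde{\Delta_b}\cdot D_1)$, so that the scalar $4\deg(\rho_3)$ does not cause one to lose any information about the order. The real content is entirely packed into Theorem~\ref{T:relatingSh}; this corollary is the functorial consequence.
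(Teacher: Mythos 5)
Your proposal is correct and is essentially identical to the paper's own proof: both deduce the corollary by noting that $\Delta_b \mapsto 4\deg(\rho_3)\,(\pi_3)_*(\widetilde{\Delta_b}\cdot D_1)$ is a composite of group homomorphisms whose value on $\Delta_b$ is $6\Sh_{C,p;\rho}$ by Theorem~\ref{T:relatingSh}, so the order of the image divides the order of $\Delta_b$. (Your closing worry about the scalar $4\deg(\rho_3)$ losing information is unnecessary: multiplication by an integer is just one more homomorphism in the chain, and the order of an image always divides the order of the source, which is exactly the direction of divisibility being claimed.)
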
 
\begin{proof}
 Let $I \colon \homtriv{E_1 \times E_2 \times E_3}{2}  \rightarrow \homtriv{E_1 \times E_2 \times E_3}{3}$ be the map corresponding to the intersection with the cycle $D_1$. If $\Delta_b$ is torsion, then its order is divisible by the order of its image (namely $6\Sh_{C,p,;\rho}$) under the following composite homomorphism:  
\[ \homtriv{C^3}{2} \xrightarrow{(\rho_J \circ i_b^{(3)})_*} 
\homtriv{E_1 \times E_2 \times E_3}{2}  \xrightarrow{(\pi_3)_* \circ I} \Pic^0(E_3)  \xrightarrow{\times 4 \deg(\rho_3)} \Pic^0(E_3).\qedhere\]
\end{proof}

Before proving Theorem~\ref{T:relatingSh}, we first prove some lemmas.  For any integer $m$ and any abelian variety $A$, let $[m]$ denote the multiplication by $m$ map on $A$ (we suppress $A$ from the notation), and let $A[m]$ denote the kernel of $[m]$.
\begin{lem}\label{L:MDPush} $\widetilde{\Delta_b} = ([3]_* -3 [2]_*+3) \rho(C).$ 
\end{lem}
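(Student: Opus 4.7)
The plan is to deduce Lemma~\ref{L:MDPush} from two facts: (i) an explicit identity for $(i_b^{(3)})_*(\Delta_b)$ in $\CH^2(J)$ that is essentially already recorded in Remark~\ref{R:MDCeresa}, and (ii) the compatibility of pushforward along a homomorphism of abelian varieties with multiplication-by-$m$ endomorphisms.

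First, I would invoke the identity noted in Remark~\ref{R:MDCeresa}, namely
\[ (i_b^{(3)})_*(\Delta_b) = [3]_*(i_b(C)) - 3[2]_*(i_b(C)) + 3\, i_b(C). \]
This is a direct computation from Definition~\ref{D:GScycle}: the sum map $(x,y,z) \mapsto (x+y+z) - 3b$ carries $\Delta_{123}$ to $[3]_*(i_b(C))$, each of $\Delta_{12,b}, \Delta_{23,b}, \Delta_{13,b}$ to $[2]_*(i_b(C))$, and each of $\Delta_{1,b}, \Delta_{2,b}, \Delta_{3,b}$ to $i_b(C)$; assembling these with the signs in the definition of $\Delta_b$ gives the claim.

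Next, I would apply $(\rho_J)_*$ to both sides. Because $\rho_J$ is a homomorphism of abelian varieties, it intertwines the multiplication-by-$m$ endomorphisms on $J$ and on $E_1 \times E_2 \times E_3$: namely, $(\rho_J)_* \circ [m]_* = [m]_* \circ (\rho_J)_*$ at the level of cycles. Applying this yields
\[ \widetilde{\Delta_b} = [3]_*\bigl((\rho_J)_* i_b(C)\bigr) - 3 [2]_*\bigl((\rho_J)_* i_b(C)\bigr) + 3\, (\rho_J)_* i_b(C). \]

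Finally, I would verify that $(\rho_J)_*(i_b(C)) = \rho(C)$ as cycles on $E_1 \times E_2 \times E_3$. The Abel-Jacobi map $i_b$ is a closed embedding (since $g(C) \geq 3$), so $i_b(C)$ is an irreducible $1$-cycle of multiplicity one, isomorphic to $C$. The restriction of $\rho_J$ to $i_b(C)$ corresponds under this isomorphism to $\rho \colon C \to E_1 \times E_2 \times E_3$, which by Lemma~\ref{L:rhoCbirational} is birational onto its image $\rho(C)$; hence the pushforward is $\rho(C)$ with multiplicity one. The only real obstacle is the bookkeeping in the first step, where one must track all seven components of $\Delta_b$ through the three-variable sum map; once that identity is in hand, the remaining steps are essentially formal.
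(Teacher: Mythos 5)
Your proposal is correct and follows essentially the same route as the paper: the paper likewise starts from the identity $(i_b^{(3)})_*(\Delta_b) = ([3]_* - 3[2]_* + 3)(i_b(C))$ (citing Gross--Schoen rather than re-deriving it), pushes forward along $\rho_J$ using $(\rho_J)_*(i_b(C)) = \rho(C)$ via Lemma~\ref{L:rhoCbirational}, and concludes from $\rho_J \circ [m] = [m] \circ \rho_J$. Your explicit bookkeeping of the seven components under the sum map is a fine substitute for the citation.
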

\begin{proof}
A direct computation shows that $(i_b^{(3)})_*(\Delta_b) = ([3]_* -3 [2]_*+3) (i_b(C))$ (see for e.g. \cite[Proposition~5.3]{GrossSchoen}).  Since $\rho = \rho_J \circ i_b$, and $\rho(C)$ is birational to $C$ by Lemma~\ref{L:rhoCbirational}, it follows that $(\rho_J)_*(i_b(C)) = \rho(C)$. Finally since $\rho_J \circ [m] = [m] \circ \rho_J$ for every integer $m$, the equality follows.
\end{proof}

For any point $a$ on an abelian variety, let $T_a$ denote the translation by $a$ map.  For $i$ in $\{0,1\}$, let $D_i$ be as in Definition~\ref{D:BlochShadow}. 
\begin{lem}\label{L:prelim} Let $m$ be any integer.Then
\begin{enumerate}[\upshape (a)]
\item\label{L:D1pullm}
$[m]^*D_1 = \sum_{a \in E_2[m]} T_a^*(D_1).$
\item\label{L:mD1Push} $(\rho_3)_* (\rho^*([m]^* D_1)) = m^2 (\rho_3)_* (\rho^*(D_1)).$
\item\label{L:Dimult} 
$(\pi_3)_* \left( [m]_* \rho(C) \cdot D_i \right) = [m]_* (\rho_3)_* (\rho^*([m]^* D_i))$.
\item\label{L:Afterpi3D1} $(\pi_3)_* \left( [m]_* \rho(C) \cdot D_1 \right) = 
m^2[m]_* (\rho_3)_*(\rho^*(D_1))$.
\end{enumerate}
\end{lem}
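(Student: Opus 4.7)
The plan is to prove the four claims in the order stated, with each later part building on earlier ones. Part (a) is a direct computation exploiting the fact that $D_1 = \Gamma_\psi \times E_3$, where $\psi = \Frob_2 \circ \phi \colon E_1 \to E_2$ is a composition of group homomorphisms. Since $[m]$ on the triple product is $[m] \times [m] \times [m]$, one has $[m]^* D_1 = (([m]\times[m])^* \Gamma_\psi) \times E_3$. A point $(y_1, y_2)$ lies in $([m]\times[m])^{-1}(\Gamma_\psi)$ iff $m\psi(y_1) = m y_2$, equivalently $\psi(y_1) - y_2 \in E_2[m]$. For $m$ prime to the residue characteristic (the only case relevant for the Bloch shadow application), $[m]\times[m]$ is \'etale, so this gives a set-theoretic decomposition $\bigsqcup_{a\in E_2[m]} \Gamma_{\psi-a}$ with each component appearing in the pullback divisor with multiplicity one. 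Since $\Gamma_{\psi-a} = T_a^*(\Gamma_\psi)$, where $T_a$ denotes translation by $a$ on the $E_2$ factor, crossing with $E_3$ yields (a).

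For part (b), apply $(\rho_3)_* \rho^*$ to both sides of (a). The key point is that $\rho^*(T_a^* D_1)$ is precisely the $d_1$ one would compute using $\rho' = (\rho_1, T_a \circ \rho_2, \rho_3)$ in place of $\rho$, since pulling the $E_2$-translation through the graph corresponds exactly to modifying $\rho_2$ by $T_a$. By Proposition~\ref{prop:independent-of-rho}(a), the class of $(\rho_3)_* \rho^*(T_a^* D_1)$ in $\Pic(E_3)$ equals that of $(\rho_3)_* \rho^*(D_1)$; summing the $|E_2[m]| = m^2$ equal contributions gives the claim.

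Part (c) is two applications of the projection formula. For $[m]$ on the triple product we have $[m]_*(\rho(C) \cdot [m]^* D_i) = [m]_*\rho(C) \cdot D_i$; applying $(\pi_3)_*$ and using $\pi_3 \circ [m] = [m] \circ \pi_3$ gives
\[
(\pi_3)_*([m]_*\rho(C) \cdot D_i) = [m]_*(\pi_3)_*(\rho(C) \cdot [m]^* D_i).
\]
By Lemma~\ref{L:rhoCbirational}, $\rho \colon C \to \rho(C)$ is birational, so $\rho_*(C) = \rho(C)$ as cycles; hence for any divisor $D$ on the triple product, the projection formula for $\rho$ gives $(\pi_3)_*(\rho(C) \cdot D) = (\pi_3 \circ \rho)_* \rho^* D = (\rho_3)_* \rho^* D$. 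Applying this with $D = [m]^* D_i$ completes (c). Finally, (d) is an immediate combination of (b) and (c) specialized to $i = 1$.

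The main subtlety I anticipate is confirming in (a) that the set-theoretic decomposition of $([m]\times[m])^* \Gamma_\psi$ carries the correct multiplicities as a divisor; this is cleanly handled by the \'etaleness of $[m]\times[m]$ when $m$ is prime to the characteristic, which is the setting of interest. Everything else reduces to standard projection-formula manipulations together with the already-established translation invariance from Proposition~\ref{prop:independent-of-rho}.
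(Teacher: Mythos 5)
Your proposal is correct and follows essentially the same route as the paper: part (a) via the translation decomposition of $[m]^{-1}(\Gamma_{\Frob_2\circ\phi})$, part (b) by applying $(\rho_3)_*\rho^*$ and invoking the translation-invariance from Proposition~\ref{prop:independent-of-rho}, part (c) by the projection formulas for $[m]$ and for $\rho$ together with $\pi_3\circ[m]=[m]\circ\pi_3$, and part (d) by combining (b) and (c). Your added justifications --- the \'etaleness remark for the multiplicities in (a) and the explicit use of Lemma~\ref{L:rhoCbirational} to identify $\rho_*C$ with $\rho(C)$ in (c) --- are points the paper leaves implicit.
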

\begin{proof}\hfill
\begin{enumerate}[\upshape (a)]
\item 
\begin{equation*}\label{E:D1pullm} [m]^*D_1 = [m]^*(\Gamma_{\Frob_2 \circ \phi} \times E_3) = \sum_{a \in E_2[m]} T_a^*(\Gamma_{\Frob_2 \circ \phi} \times E_3) = \sum_{a \in E_2[m]} T_a^*(D_1). \end{equation*}

\item 
\begin{align*}
(\rho_3)_* \left( \rho^* [m]^*(D_1) \right)  
&= (\rho_3)_*  \rho^* \left(
 \sum_{a \in E_2[m]} T_a^*(D_1) \right) \ \ \ \textup{(by \eqref{L:D1pullm})} \\
&=  \sum_{a \in E_2[m]} (\rho_3)_*  \rho^* T_a^*(D_1) \\
&=  \sum_{a \in E_2[m]} (\rho_3)_*  \rho^* (D_1) \ \ \ \textup{(by Proposition~\ref{prop:independent-of-rho}~\eqref{P:rho2ind})} \\
&= m^2 (\rho_3)_*  \rho^* (D_1) \ \ \ \textup{(since $\deg(E_2[m]) = m^2$)}.
\end{align*}

\item By the projection formula for $[m]$ on $E_1 \times E_2 \times E_3$, we have
\begin{align*}\begin{split}
    (\pi_3)_* \left( [m]_* \rho(C) \cdot D_i \right) &= (\pi_3)_* [m]_* \left( \rho(C) \cdot [m]^* D_i \right) \\ 
    &= [m]_* (\pi_3)_* \left( \rho(C) \cdot [m]^* D_i \right) \ \ \ \textup{(since $\pi_3 \circ [m] = [m] \circ \pi_3$)}\\
    &= [m]_* (\pi_3)_* \rho_*(\rho^*([m]^* D_i)) \ \ \ \textup{(by the projection formula for $\rho$)} \\
    &= [m]_* (\rho_3)_* (\rho^*([m]^* D_i)) \ \ \ \textup{(since $\pi_3 \circ \rho = \rho_3$)}.
\end{split}\end{align*}

\item 
\[ (\pi_3)_* \left( [m]_* \rho(C) \cdot D_1 \right) \underset{\textup{(by \eqref{L:Dimult})}}{=}  [m]_* (\rho_3)_* (\rho^*([m]^* D_1)) \underset{\textup{(by \eqref{L:mD1Push})}}{=} m^2[m]_* (\rho_3)_* (\rho^*(D_1)). \qedhere \]
\end{enumerate}
\end{proof}

\begin{proof}[Proof of Theorem~\ref{T:relatingSh}] Let $d_0,d_1$ be as in Definition~\ref{D:BlochShadow}. Note that \[(\rho_3)_*d_0 = (\rho_3)_*(\rho_3)^*(\rho_3(R_C)) = \deg(\rho_3) \rho_3(R_C).\] Since $R_C = K_C = 4b$ and $\rho_3(b) = O_{E_3}$ by our choice of $b$ and $\rho_3$, it follows that $\rho_3(R_C) = 4O_{E_3}$. Combining the previous two sentences, we get that \begin{equation}\label{E:degd0} \deg(d_0) = \deg((\rho_3)_*d_0) = 4 \deg(\rho_3),\end{equation} and that
\begin{equation}\label{E:bcan}
  (\rho_3)_*(d_0) = \deg(\rho_3)\rho_3(R_C) = 4\deg(\rho_3) O_{E_3} = \deg(d_0) O_{E_3}.   
\end{equation}

\begin{align}\label{E:MDDi}
\begin{split}
(\pi_3)_* \left( \widetilde{\Delta_b} \cdot D_1 \right) 
&= (\pi_3)_* \left( ([3]_* -3 [2]_*+3) \rho(C) \cdot D_1 \right)  \ \ \ \textup{(by Lemma~\ref{L:MDPush})} \\
&= (\pi_3)_* \left( ([3]_*\rho(C)) \cdot D_1 -3 ([2]_*\rho(C)) \cdot D_1+3 \rho(C) \cdot D_1 \right) \\
&= (9[3]_*-12[2]_*+3) \left( \rho_3)_*(\rho^*(D_1) \right) \ \ \ \textup{(by Lemma~\ref{L:prelim}~\eqref{L:Afterpi3D1})} \\
&= 6\left( (\rho_3)_*d_1 - \deg(d_1) O_{E_3}  \right) \ \ \textup{(by the definitions of $[m]_*$ and $d_1$)}.
\end{split}
\end{align}
Putting \eqref{E:degd0}, \eqref{E:bcan} and \eqref{E:MDDi} together, we get that
\begin{align*}
4 \deg(\rho_3) (\pi_3)_* \left( \widetilde{\Delta_b} \cdot D_1 \right) 
&= \deg(d_0) (\pi_3)_* \left( \widetilde{\Delta_b} \cdot D_1 \right) \\
&= 6\deg(d_0) \left( (\rho_3)_*d_1 - \deg(d_1) O_{E_3}  \right) \\
&= 6  \left( \deg(d_0)(\rho_3)_* d_1 - \deg(d_1) (\rho_3)_* d_0 \right)
\\
&= 6\Sh_{C,p,;\rho}.\qedhere
\end{align*}
\end{proof}

\subsection{An algorithm for certifying nontriviality of Ceresa cycles of semidiagonal quartics}
We can now modify Algorithm~\ref{alg:upperbound} as follows to prove that a semidiagonal quartic (or any other curve with completely reducible Jacobian) has nontorsion modified diagonal
cycle. The idea is to replace the lower bounds $M_\mathfrak{p}$ in Algorithm~\ref{alg:upperbound} obtained from the order of the Frobenius shadow (which in the semidiagonal case stay bounded by $4$ as $\mathfrak{p}$ varies by Remark~\ref{R:trivFrobSD}) by the lower bound obtained by computing the order of $6\Sh_{C,\mathfrak{p},;\rho}$ instead.

\begin{algo}\label{alg:upperboundBloch} \hfill
\begin{enumerate}[\upshape (1)]
\item Choose a nonempty finite set \(\mathcal{T}\) of auxiliary good primes of $X$.
\item For each \(\mathfrak{p}\) in $T$, compute the integer $N_{\mathfrak{p}} \colonequals \det(\Frob_{\mathfrak{p}}-I)|_M$, where $M$ is the $G_K$ module in Proposition~\ref{P:DivofTorsOrd}, and the order $M_{\mathfrak{p}}$ of $6 \Sh_{X_{\mathfrak{p}},\mathfrak{p};\rho}$.

\item Let \(\widetilde{N} = \gcd_{\mathfrak{p} \in \mathcal{T}}(p^{\infty}N_{\mathfrak{p}})\), and let \(\widetilde{M} = \lcm_{p \in \mathcal{T}}(p^{-\infty}M_{\mathfrak{p}})\).
\end{enumerate}
If $\widetilde{M} \nmid \widetilde{N}$, then output that the modified diagonal cycle of $X$ has infinite order.
\end{algo}

\begin{rem}
One could even hope to apply the same idea when the curve of genus $3$ is bielliptic, finding primes for which the simple factor of
the Jacobian becomes reducible with one of the factors isogenous to the elliptic curve factor, though this would require some good luck
since unlike Lemma~\ref{L:SDfacts}~\eqref{L:SDJac} for semidiagonal quartics, one would not expect there to be infinitely many such primes.
\end{rem}

\begin{example}\label{ex:use-bloch-shadow} Consider the curve $C \subset \pp^2$ defined by $x^4 + x^2y^2 - y^4 - x^2yz + x^2z^2 - y^2z^2 - z^4$.
In addition to the obvious automorphism $(x:y:z) \to (-x:y:z)$ it also has $(x:y:z) \to (x:-z:-y)$ and so
it is isomorphic to a semidiagonal curve.  The genus-$1$ quotients $E_1, E_2, E_3$ of $C$ all have good reduction outside $\{2,3,7,13\}$
and their $j$-invariants are $38272753/4368, 55296/7, 256000/117$.  There are
many primes $p$ for which we have $a_p(E_i) = \pm a_p(E_j)$, including $45$ of the primes up to $1000$.
In particular we find that the number of $\ff_{47}$-points on the three elliptic factors are 
$44, 44, 46$, and that the two factors with $44$ points are $2$-isogenous.  Following the procedure outlined
above, we obtain a divisor $D_1$ of degree $190$ on $C_{/\ff_{47}}$, and we calculate that 
${\rho_3}_*(4D_1 - 190D_0)$
has order $23$.  Using primes of good reduction up to $20$ to calculate the upper bound and multiplying by $6$, we obtain
$10368000 = 2^{10} \cdot 3^4 \cdot 5^3$.  It follows that the modified diagonal class of $C$ is not torsion. See the file \path{census/semidiagonal-example.mag} in \cite{code}.  
\end{example}

\section{Census of modified diagonal classes of low height smooth plane quartic curves}\label{sec:census}
As a test of the effectiveness of our methods, we decided to study the collection of all smooth plane quartic curves defined by equations
all of whose coefficients belong to $\{-1,0,1\}$.  Up to the action of $\symgp_4$ by signed permutations modulo $\pm 1$,
there are $255564$ such curves, which fall into 
$254704$ isomorphism classes over $\cc$, as is shown by calculating Dixmier-Ohno invariants.  Simply by considering our shadows on these curves for $p < 100$, we
show that the Ceresa class is torsion for at most $147$ of these, representing
$89$ different $\cc$-isomorphism classes.    Of these $147$, three of them are
$\cc$-isomorphic to $y^3 = x^4 + 1$ and one to $y^3 = x^4 + x$.  These have torsion
Ceresa cycle modulo rational equivalence by the results of \cite[Proposition~5.3(ii)]{LS}. 

On the other hand, three are forms of
the Fermat quartic and therefore have nontorsion Ceresa class modulo algebraic equivalence.  We ignore these
henceforth.  It is convenient to group the remaining $140$  curves by the
automorphism group over $\mathbb{Q}$.

\subsection{Order 1}\label{sec:census-1}
Our list of curves includes $4$ whose automorphism group over $\mathbb{Q}$ is trivial, 
all different over $\cc$.
In the given equation for each of these, there is a variable in which all monomials
have degree $0$ or $3$; in other words, these are {\em Picard curves}.  Thus the
criterion of \cite{laga-shnidman-2} can be applied.  It is easy to check that in all
four cases the point on the elliptic curve constructed in \cite[Theorem C]{laga-shnidman-2} is of finite order; therefore, the Ceresa class is torsion modulo rational equivalence.

\subsection{Order 2}\label{sec:census-2}
We find $13$ curves whose automorphism group over $\mathbb{Q}$ has order $2$.  For $6$ of
these, including one pair of curves that become isomorphic over $\mathbb{Q}(i)$,
the geometric automorphism group is of order $2$.  In one case, the failure of our 
calculations so far to prove that the Ceresa class is not torsion modulo rational equivalence
results from a 
special property of the genus-$1$ quotient: to wit, it is isogenous
to a curve with torsion subgroup of order $12$.  This contributes to the order
of the reductions of this curve mod $p$ being divisible only by primes
$\le 5$ for $p \le 100$ (but this is not guaranteed;
it is just by coincidence that $E$ does not have
$84$ points mod $89$, for example).  Since the upper bound is divisible
by powers of $3$ and $5$, this results in the order of the shadow dividing the
upper bound for all $p < 100$.  However, for $p = 101$ and $p = 113$ (and indeed for
most primes) this is no longer the case and in fact the curve does not have 
torsion Ceresa cycle.  
The other $5$ all admit maps of degree $2, 3, 6$ to 
elliptic curves.  At present we are unable to decide whether the classes of the Ceresa cycles
of these curves are torsion.   We now describe an unsuccessful attempt to determine this.
\begin{defin}
Let $C$ be a genus $g$ curve and let $\phi: C \to C'$ be a dominant map of curves of degree $d$.  Define the \defi{relative canonical shadow} $D_0(C,\phi)$ in $\Pic^0(C)$ by the formula 
\[
D_0(C,\phi) \colonequals -(2g_C-2)\phi^{-1} K_{C'} - 2 \phi^{-1} \phi(K_C) + (2dg_{C'})K_C.
\]
\end{defin}

\begin{rem} 
If $g(C') = 0$, the $(2dg_{C'})K_C$ term of $D_0(C,\phi)$ vanishes, and we are left with a degree-$0$ divisor pulled back from $\mathbb{P}^1$, which thus must be $0$.  This is as it should be, since curves with trivial Ceresa class nonetheless have lots of maps to $\mathbb{P}^1$ and it would be surprising if such maps could provide obstructions to Ceresa vanishing.  If $g(C') = 1$, then $K_{C'}$ vanishes and the formula simplifies to $D_0(C,\phi) = 2dK_C - 2\phi^{-1}(\phi(K_C))$.
\end{rem}

\begin{lem}\label{lem:canonical-pushpull}
If the modified diagonal cycle $\Delta_b$ is torsion, then the order of the relative canonical shadow $D_0(C,\phi)$ divides the order of $\Delta_b$.
\end{lem}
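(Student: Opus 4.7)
The plan is to realize $D_0(C,\phi)$ as $(2g_C - 2)$ times the shadow $\Sh(Z_\phi,b)$ of a natural correspondence on $C \times C$, with respect to a canonical base point $b$, and then invoke Lemma~\ref{L:Key}. After a finite base extension if necessary, fix $b$ with $(2g_C-2)b = K_C$; by the discussion preceding Proposition~\ref{P:CanBP} (together with Remark~\ref{R:CerTorGeom}), the torsion order of $\Delta_b$ is insensitive to this enlargement. Take
\[
Z_\phi \colonequals (\phi \times \phi)^* \Delta_{C'} \in \CH^1(C \times C).
\]
Both projections $\psi_i \colon Z_\phi \to C$ are finite of degree $d$, and for $D \in \Pic(C)$ a direct computation gives $(\psi_2)_* \psi_1^* D = \phi^{-1}\phi(D)$, so in the notation of Lemma~\ref{L:ComputeSh} we have $d_{Z_\phi} = e_{Z_\phi} = d$ and $f_{Z_\phi} = g_{Z_\phi} = \phi^{-1}\phi$.

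The only nontrivial input is the computation of $(\pi_1)_*(Z_\phi \cdot \Delta_C)$. Since $Z_\phi$ contains $\Delta_C$ as a component, the intersection cannot be read off set-theoretically; instead we compute it as a line bundle class by restricting $\mathcal{O}_{C\times C}((\phi\times\phi)^*\Delta_{C'})$ to $\Delta_C$. The composition $\Delta_C \hookrightarrow C \times C \xrightarrow{\phi \times \phi} C' \times C'$ factors through $\Delta_{C'}$, and $\mathcal{O}(\Delta_{C'})|_{\Delta_{C'}}$ is the normal bundle $T_{C'} \cong \mathcal{O}(-K_{C'})$; this yields $(\pi_1)_*(Z_\phi \cdot \Delta_C) = -\phi^{-1} K_{C'}$, of degree $-d(2g_{C'}-2)$. (If one prefers to avoid common components, work instead with the ``trace'' correspondence $T \colonequals Z_\phi - \Delta_C$, whose intersection with $\Delta_C$ is a proper intersection concentrating on the ramification divisor of $\phi$; since $\Sh$ is linear in the correspondence and $\Sh(\Delta_C,b) = (2g_C-2)b - K_C = 0$ for canonical $b$ by~\eqref{E:ShId}, both routes give the same shadow, providing a consistency check via Riemann--Hurwitz.) Feeding this into Lemma~\ref{L:ComputeSh} and simplifying $2d - (-d(2g_{C'}-2)) = 2dg_{C'}$ gives
\[
\Sh(Z_\phi,b) \;=\; -\phi^{-1}K_{C'} - 2\phi^{-1}\phi(b) + 2dg_{C'}\,b.
\]

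Multiplying through by $(2g_C-2)$ and using $(2g_C-2)b = K_C$ to convert the remaining $b$'s on the right-hand side into $K_C$ produces precisely $D_0(C,\phi)$, i.e.\ $(2g_C-2)\,\Sh(Z_\phi,b) = D_0(C,\phi)$. Therefore, if $\Delta_b$ is torsion of order $n$, the contrapositive of Lemma~\ref{L:Key} yields $n\,\Sh(Z_\phi,b) = 0$ in $\Pic^0(C)$, whence $n\,D_0(C,\phi) = 0$, which is the desired divisibility. The main subtlety is the diagonal-intersection calculation outlined above; once that is in hand, everything else is bookkeeping through the linearity of the shadow map.
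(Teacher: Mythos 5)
Your argument is essentially identical to the paper's: both take the correspondence $Z_\phi = (\phi\times\phi)^{*}\Delta_{C'}$, compute $f_{Z_\phi}=g_{Z_\phi}=\phi^{-1}\phi$ and $(\pi_1)_*(Z_\phi\cdot\Delta_C)=-\phi^{-1}K_{C'}$ via the factorization of $\Delta_C\hookrightarrow C\times C\to C'\times C'$ through $\Delta_{C'}$, feed this into Lemma~\ref{L:ComputeSh}, specialize to a canonical base point, and conclude by Lemma~\ref{L:Key}. Your added care about the improper intersection (handling the common component $\Delta_C$ via the normal-bundle restriction, with the trace correspondence $Z_\phi-\Delta_C$ as a consistency check) is a correct elaboration of a step the paper states more tersely.
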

\begin{proof} By Lemma~\ref{L:Key}, the order of $\Sh(Z,b)$ divides the order of $\Delta_b$ for any correspondence $Z$. Consider the nice correspondence $Z \in \CH^1(C \times C)$ defined by 
\[ Z \colonequals \{ (c_1,c_2) \in C \times C \ | \ \phi(c_1) = \phi(c_2) \}.\]
In the notation of Lemma~\ref{L:ComputeSh}, we have $f_Z(b) = g_Z(b) =\phi^{-1}(\phi(b))$, $\deg(f_Z) = \deg(g_Z) = \deg(\phi) = d$. It remains to compute $Z \cdot \Delta$. Observe that $Z = (\phi \times \phi)^{-1} \Delta_{C'}$, where $\Delta_{C'}$ is the diagonal divisor on $C' \times C'$. Let $i_C \colon C \rightarrow C \times C$ denote the diagonal embedding. Similarly, define $i_{C'} \colon C' \rightarrow C' \times C'$.
Now since $\phi \times \phi \circ i_C = i_{C'} \circ \phi$, it follows that
\[ i_C^{-1}(Z \cdot \Delta) = i_C^{-1}(\phi \times \phi)^{-1}(\Delta_{C'}) = \phi^{-1} (i_{C'}^{-1} (\Delta_{C'})) = -\phi^{-1}(K_{C'}),  \]
where the last equality is by the definition of $K_{C'}$.
%Let $Z = \Delta+\delta$, for some divisor $\delta$ on $C \times C$. Let $R_C$ denote the ramification divisor for $\phi$. Since $\Delta.\Delta = -K_C$, and $\delta.\Delta=R_C$ by a local computation (this is where we use the separability hypothesis on $\phi$), it follows from the Riemann-Hurwitz formula that 
%\[ Z \cdot \Delta = \Delta \cdot \Delta+\delta \cdot \Delta = -\phi^{-1} K_{C'}.\]

Lemma~\ref{L:ComputeSh} then yields
\[ 
\Sh(Z,b) = -\phi^{-1} K_{C'} - 2 \phi^{-1} \phi(b) + (2dg_{C'})b.
\]
Taking $b$ to be a canonical base point, we get
\[ 
(2g_C-2)\Sh(Z,b) = -(2g_C-2)\phi^{-1} K_{C'} - 2 \phi^{-1} \phi(K_C) + (2dg_{C'})K_C. \qedhere
\]
\end{proof}

In all five cases, it turns out that the relative canonical shadow is torsion of small order.
However, this does not prove that the class of the Ceresa cycle is torsion.

\begin{rem}\label{rem:relative-canonical-not-trivial}
In this case it is easy to verify that the relative canonical shadows are torsion and
to determine their orders, since the curve of genus $3$ is isogenous to a product of
elliptic curves.  This is not a general property of curves of genus $3$ admitting maps to
elliptic curves of degrees $2, 3, 6$:  for example, the curve
$x^2y^2 - xy^3 + y^4 + x^3z + x^2yz + y^3z + 2x^2z^2 - xyz^2 + y^2z^2 + xz^3 = 0$
(this comes from a list of curves of small conductor produced by Sutherland
\cite{Sutherland})
has Jacobian with torsion subgroup of order $5$, and yet $5$ times the relative canonical
shadow for the map of degree $6$ is not principal.  Thus the relative canonical shadow proves
that the class of the Ceresa cycle up to rational equivalence is not torsion. It is possible that the
methods of \cite{laga-shnidmanbiellipticpicard} could be adapted to the family of curves of
genus $3$ with maps to elliptic curves of degrees $2, 3, 6$, but we leave this for future work.
\end{rem}

The remaining $7$ all have automorphism
group of order $6$ over $\cc$.  These fall into two types:
there are $6$ for which the automorphism group is $S_3$ and one for which it is
cyclic of order $6$.  

In the first case, we can always change coordinates over $\mathbb{Q}(\zeta_3)$ to obtain
an equation invariant under permutations of the variables and defined over $\mathbb{Q}$.
Thus if we replace the
variables by three linear forms defined and conjugate over a cubic extension of $\mathbb{Q}$,
we obtain a new curve to which we can attempt to apply our shadow.  In each case 
we made a random choice of linear form over $\mathbb{Q}(\zeta_7)^+$ with small integral
coefficients and found that our shadow showed the Ceresa class to be of infinite
order.

The second case is the one studied in \cite{laga-shnidmanbiellipticpicard}, and it turns out to be
the case of their Theorem~1.1 with $t = 3$.  The point $(2,3)$ is torsion on the curve
$y^2 = x^3 + 1$, so the Ceresa cycle in this case is of finite order up to algebraic equivalence.

\subsection{Order 4}\label{sec:census-4}
Ignoring three for which the Ceresa cycle is known to be torsion by \cite{BS} or \cite{LS},
we have $85$ curves with $4$ rational automorphisms, which lie in $45$ different
isomorphism classes over $\mathbb{Q}$.  Of these, $4$ are forms of curves that will be
dealt with in Section~\ref{sec:census-8}.  In addition, there are $12$ with 
geometric automorphism group of order $8$, falling into $6$ isomorphism classes over
$\cc$, each containing two curves.  For three of these the automorphism group is
$C_2 \oplus C_4$.  These can be given in the form $x^4 = q(y^2,z^2)$, where
$q$ is homogeneous of degree $2$.  These can be treated in the same way as
in the general case where the geometric automorphism group has order $4$.
Namely, we have three maps to elliptic curves $E_1, E_2, E_3$.  For each of 
these curves $C$, we find a $\cc$-isomorphic curve on our list and a prime
$p$ such that two of the reductions of the $E_i \bmod p$ are
isogenous and the order of the shadow does not
divide $6$ times the initial upper bound.  This would contradict 
Corollary~\ref{C:BlShMD} if the Ceresa cycle were of finite order.

On the other hand, there are three orbits for which the geometric automorphism group is $D_4$.
In each case we can find another form of the curve for which the automorphisms are
all defined over $\mathbb{Q}$ and proceed as in Section~\ref{sec:census-8}, showing in 
each case that the Ceresa class is not torsion.  

The remaining $2$ curves are defined by the equations $x^4 + x^2y^2 - y^4 \pm z^4 = 0$, so they are isomorphic over 
$\cc$ and indeed $\mathbb{Q}(i)$.  The full automorphism group is defined over $\mathbb{Q}(\zeta_8)$.   To prove that the 
Ceresa class is not torsion modulo numerical equivalence, we use the equations for twists of plane quartics given in \cite[Prop.~5.7]{LG}.
This curve represents the case $a = \pm 1$, and in the notation used there we may take $b = 18, c = 5, m = 13, q = 1$.  
We obtain a twist defined by the equation 
$$37x^4 + 520x^3y + 2782x^2y^2 + 6760xy^3 + 6253y^4 + z^4$$
for which we can verify that the lower bound does not divide the upper bound by working only with $p<40$.

\subsection{Order 6}\label{sec:census-6}
There are $24$ curves on our list with automorphism group of order $6$, no two 
isomorphic over $\cc$.  In each case
the automorphism group can be taken to be the standard action of $S_3$ and the
technique described in Section~\ref{sec:census-2} rapidly shows that the Ceresa
class is not torsion modulo rational equivalence.

\subsection{Order 8}\label{sec:census-8}
In addition to two forms of the Fermat quartic,
we have $13$ curves with automorphism group of order $8$, representing $6$ isomorphism
classes over $\cc$ (some of which are the same as classes of curves with $4$ 
automorphisms).  In every case the automorphism group can be conjugated into 
that generated by $(x:y:z) \to (-x:y:z), (x:y:z) \to (-y:x:z)$.  In order to construct
twists, we need only look at the cyclic subgroup of order $4$ generated by the
second element shown.  Let $K/\mathbb{Q}$ be a cyclic extension of degree $4$ with Galois
group generated by $\alpha$, let $C$ be one of the $13$ curves, and consider the
action on $K(C)$ given by $(\lambda x)^\alpha = -\lambda^\alpha y, (\lambda x)^\alpha = \lambda^\alpha x, \alpha(\lambda z) = \lambda^\alpha z$.  If $\mu \in K$ satisfies
$\mu^{\alpha^2} = -\mu$ then $(\mu^\alpha x + \mu y)z$ is an invariant function.

In particular, take $K = \mathbb{Q}(\zeta_5)$ and change coordinates by $(x:y:z) \to (\zeta_5-\zeta_5^4)x + (\zeta_5^2 - \zeta_5^3)y:(\zeta_5^2-\zeta_5^3)x+(\zeta_5^4-\zeta_5)y:z)$.
This produces another form of the curve, and calculating our invariants for this form
is enough to show that $11$ of the $13$ curves have nontorsion Ceresa class
modulo rational equivalence.  The 
other two are $\cc$-isomorphic to one of the $11$, so the result follows for them
as well.

\subsection{Order 24}\label{sec:census-24}
Aside from a form of the Fermat quartic, 
there remains $1$ curve with $24$ rational automorphisms.  Its Ceresa class is shown
not to be torsion up to rational equivalence as in Section~\ref{sec:census-2}.

\subsection{Summary}
We summarize the results of our census as follows.  Of the $255564$ orbits of curves, we exclude all but $287$ by applying our
shadow for $p < 30$, and $140$ more by doing so for $p < 100$. 
The remaining $147$ curves are summarized in this table:

\begin{tabular}{|r|r|r|r|r|r|}
\hline
$\#\Aut(C/\mathbb{Q})$ & $\#\Aut(C/\cc)$ & Number of curves & Torsion & Not torsion & Undecided\\
\hline
$1$&$3$&$3$&$3$&$0$&$0$\\
$1$&$9$&$1$&$1$&$0$&$0$\\
$1$&$48$&$1$&$1$&$0$&$0$\\
$2$&$2$&$6$&$0$&$1$&$5$\\
$2$&$6$&$7$&$1$&$6$&$0$\\
$4$&$4$&$67$&$0$&$67$&$0$\\
$4$&$8$&$16$&$0$&$16$&$0$\\
$4$&$16$&$2$&$0$&$2$&$0$\\
$4$&$48$&$3$&$3$&$0$&$0$\\
$6$&$6$&$24$&$0$&$24$&$0$\\
$8$&$8$&$8$&$0$&$8$&$0$\\
$8$&$16$&$4$&$0$&$4$&$0$\\
$8$&$24$&$1$&$0$&$1$&$0$\\
$8$&$96$&$2$&$0$&$2$&$0$\\
$24$&$24$&$1$&$0$&$1$&$0$\\
$24$&$96$&$1$&$0$&$1$&$0$\\
\hline
\end{tabular}

\begin{rem}\label{rem:table-summary}
All of the undecided curves have completely reducible Jacobians with maps of degree
$2, 3, 6$ to elliptic curves.  As mentioned above, we believe that it would be possible to adapt
the methods of \cite{laga-shnidmanbiellipticpicard} to this situation and determine which
of these curves actually have torsion Ceresa class.  Aside from these, we have not discovered 
any genuinely new examples in which the Ceresa class is torsion.  Rather, we have shown that all
other candidates defined by equations of 
height $1$ are either explained by \cite[Theorem C]{laga-shnidman-2} or result
from the known examples of Beauville and Lilienfeldt-Shnidman.
\end{rem}
\begin{rem}\label{R:torsionorder}
It is also natural to ask whether our calculations suggest a way of determining the 
order of the Ceresa class when it is in fact torsion.  In particular, we might
hope that the lower and upper bounds are equal.  However, this is far from being the
case.  Using primes up to $100$ to compute both the lower and the upper bounds, we find
the following results.

\begin{tabular}{|r|r|r|r|}
\hline
Type&Number&Lower bounds&Upper bounds\\
\hline
$y^3 = x^4+x$&$1$&$6$&$2\cdot 3^8$\\
$y^3 = x^4+1$&$3$&$2,6,6$&$2^9\cdot 3^8, 2^9 \cdot 3^5, 2^9 \cdot 3^5$\\
Laga-Shnidman&$4$&$12, 12, 4, 2$&$2^3 \cdot 3^7, 2^3\cdot 3^8, 2^3 \cdot 3^8, 2\cdot 3^9$\\
2-3-6 curves&6&$4,6,4,4,4$&$2^{11},2^3\cdot 3, 2^3\cdot 3^3, 2^4 \cdot 3^3, 2^4 \cdot 5^2$\\
\hline
\end{tabular}

In most of these cases, the upper bound is much larger than the lower bound.  The case in
which they are the closest is the 2-3-6 curve whose lower bound is $6$; its upper bound
is $24$.  It is defined by the equation $x^4 + x^3y + x^3z + x^2y^2 + x^2z^2 + xy^2z + xyz^2 - y^3z - y^2z^2 - yz^3 = 0$.  We note that the lower bound for the Fricke-Macbeath quotient of
\cite{BLLS} is $36$.

\end{rem}

\bibliographystyle{alpha}
\bibliography{CeresaCertificate}

\end{document}